\newcommand{\B}{\mathcal{B}}
\newcommand{\D}{\mathcal{D}}
\newcommand{\DD}{\mathbb{D}}
\newcommand{\F}{\mathcal{F}}
\newcommand{\HH}{\mathcal{H}}
\renewcommand{\L}{\mathscr{L}}
\newcommand{\N}{\mathbb{N}}
\newcommand{\PP}{\mathbb{P}}
\newcommand{\R}{\mathbb{R}}
\renewcommand{\S}{\mathscr{S}}
\newcommand{\Z}{\mathbb{Z}}
\let\div\relax
\DeclareMathOperator{\lip}{Lip}
\DeclareMathOperator{\div}{div}
\renewcommand{\epsilon}{\varepsilon}
\renewcommand{\setminus}{\smallsetminus}
\newcommand{\one}{\bm{1}}
\newcommand{\de}{\partial}
\newcommand{\set}[1]{\left\{#1\right\}}
\newcommand{\pa}[1]{\left(#1\right)}
\newcommand{\bra}[1]{\left[#1\right]}
\newcommand{\abs}[1]{\left|#1\right|}
\newcommand{\norm}[1]{\left\|#1\right\|}
\newcommand{\brak}[1]{\left\langle#1\right\rangle}
\newcommand{\prob}[1]{\mathbb{P}\left(#1\right)}
\newcommand{\expt}[2][]{\mathbb{E}_{#1}\left[#2\right]}
\newtheorem{theorem}{Theorem}[section]
\newtheorem{definition}[theorem]{Definition}
\newtheorem{corollary}[theorem]{Corollary}
\newtheorem{lemma}[theorem]{Lemma}
\newtheorem{proposition}[theorem]{Proposition}
\theoremstyle{remark}
\newtheorem{remark}[theorem]{Remark}
\numberwithin{equation}{section}
\newenvironment{acknowledgements}{%
  \begin{abstract}
}{%
  \end{abstract}
}
\title[Uniform Approximation of 2D Navier-Stokes]{Uniform Approximation of 2D Navier-Stokes Equations with Vorticity Creation by Stochastic Interacting Particle Systems}
\author[F. Grotto]{Francesco Grotto}
\address{Università di Pisa, Dipartimento di Matematica, 5 Largo Pontecorvo, 56127 Pisa, Italia}
\email{\href{mailto:francesco.grotto at unipi.it}{francesco.grotto at unipi.it}}
\author[E. Luongo]{Eliseo Luongo}
\address{Scuola Normale Superiore, Piazza dei Cavalieri, 7, 56126 Pisa, Italia}
\email{\href{mailto:eliseo.luongo at sns.it}{eliseo.luongo at sns.it}}
\author[M. Maurelli]{Mario Maurelli}
\address{Università di Pisa, Dipartimento di Matematica, 5 Largo Pontecorvo, 56127 Pisa, Italia}
\email{\href{mailto:mario.maurelli at unipi.it}{mario.maurelli at unipi.it}}
\keywords{2d Navier-Stokes equations, vorticity equation, stochastic interacting particle system, reflecting boundary.}
\subjclass{76D05 (60F99 60H10 60H20 35Q30 35R60)}
\date\today
\begin{document}

\begin{abstract}
	We consider a stochastic interacting particle system
	in a bounded domain with reflecting boundary,
    including creation of new particles on the boundary prescribed by a given source term.
    We show that such particle system approximates
	2d Navier-Stokes equations in vorticity form and impermeable boundary, 
    the creation of particles modeling vorticity creation at the boundary.
	Kernel smoothing, more specifically smoothing by means of
	the Neumann heat semigroup on the space domain, 
    allows to establish uniform convergence
	of regularized empirical measures
	to (weak solutions of) Navier-Stokes equations.
\end{abstract}

\maketitle

\section{Introduction}\label{sec:introduction}

Let $D\subset \R^2$ be a smooth, bounded, open 2-dimensional domain,
and consider 2d Navier-Stokes equation under no-slip boundary conditions,
\begin{equation}\label{eq:2dNS}
    \begin{cases}
        \de_t u+ (u\cdot \nabla) u + \nabla p= \nu \Delta u, &\text{in }[0,T]\times D,\\
        u=0, &\text{in }[0,T]\times \de D.
    \end{cases}
\end{equation}
Reformulating the bulk dynamics in terms of vorticity 
$\omega= \nabla^\perp\cdot u$ (the curl of $u$) straightforwardly leads to
\begin{equation*}
    \de_t \omega + u\cdot \nabla \omega= \nu \Delta \omega,
\end{equation*}
an equation that can be put in closed form
expressing $u=K[\omega]$ in terms of $\omega$
by means of Biot-Savart law,
\begin{equation*}
    K[\omega](x)=\int_D K(x,y)\omega(y)dy,
    \quad K(x,y)=\nabla_x^\perp G(x,y),\quad
    G=(-\Delta_{Dir})^{-1},
\end{equation*}
where the choice of Dirichlet conditions for the Green function
$(-\Delta_{Dir})^{-1}$
makes it so that the vorticity dynamics encodes the impermeability
condition $u|_{\de D}\cdot \hat n=0$ at the boundary.
The tangential part $u^\perp|_{\de D}\cdot \hat n=0$ 
of the no-slip boundary condition, on the other hand,
is harder to express in terms of $\omega$ only, giving rise
to a non-local condition on vorticity (see \cite[Section 1.1]{Bardos2022} for an explicit expression).
Physically speaking, the condition $u^\perp|_{\de D}\cdot \hat n=0$ 
is expected to force production of vorticity, especially in the proximity of the boundary,
so that the tangential velocity at $\de D$ induced by the bulk of the fluid be compensated \cite{Benfatto1984}.
This fact becomes an issue in the study of approximation methods
for Navier-Stokes equations in vorticity form, which is a
natural approach in the 2-dimensional setting.

Previous approaches to approximating vorticity creation at the boundary for 2-dimensional Navier-Stokes
equations \cite{Chorin1978,Benfatto1984,Cottet1988,Cottet1995,Jourdain2004,Meleard2004}
have considered (unphysical) Neumann boundary conditions for the vorticity dynamics,
    \begin{equation}\label{eq:neuNS}\tag{NNS}
        \begin{cases}
            \de_t \omega+K[\omega]\cdot \nabla \omega =\nu \Delta \omega,\quad \text{in }D\times [0,T],\\
            \nabla\omega\cdot \hat n=g, \quad \quad \text{in }\de D\times (0,T), 
        \end{cases}
    \end{equation}
where $g\in L^2([0,T], L^2(\de D))$ and $\nu>0$.
The latter system acts as a natural proxy for the (physical) no-slip condition if one seeks to mathematically describe vorticity production
in the boundary layer by means of the source term of Neumann boundary condition or with an explicit addition
of vorticity sheets, recovering the correct PDE dynamics \eqref{eq:2dNS} in a macroscopic or iterative limit.
The derivation of \eqref{eq:2dNS} on a generic domain $D$
by means of an approximation method explicitly describing the effect of the boundary as a singular source of vorticity
remains a mostly open problem.

The present contribution aims to lay the groundwork for the construction of an approximation method for \eqref{eq:2dNS}
based on interacting particle systems uniformly converging to the PDE system, that is, seeking to
provide at the same time a good microscopic description of vorticity creation, and a good approximation scheme.
In order to do so, we generalize to the bounded domain setting the analytic and probabilistic tools introduced in \cite{Flandoli2019,Flandoli2020}
for the uniform approximation of PDEs by (stochastic) interacting particle systems in full-space.

We consider a system of $N$ particles evolving according to the stochastic differential equations
with reflecting boundary
\begin{equation}\label{eq:particleSDE}
    d x_i = F\pa{\sum_{j=1}^N \omega_j K_n(x_i,x_j)}dt + \sqrt{2\nu} dB^i-dk_i,
\end{equation}
each particle starting its evolution at time $t_i\in [0,T]$ at $x_i(t_i)$, either located in the interior of $D$ if $t_i=0$ or on $\de D$ if $t_i>0$.
The system is closely related to the one considered in \cite{Jourdain2004},
the main difference being the generation mechanism.
Let us briefly and informally describe the various parts of the dynamics
(precise definitions are all deferred to \autoref{sec:statements}).

\emph{Generation of particles}, that is initialization of each of the SDEs \eqref{eq:particleSDE},
happens either in the interior $D$ at time $0$ 
(so to model the initial datum of the limiting PDE)
or at the boundary $\de D$ at later times 
(so to model the source term $g$ of the Neumann b.c.).
Concretely, we will consider a grid spanning 
$(\set{0}\times D) \cup ((0,T]\times \de D)$ indexed by $i$, 
and generate the particle $x_i$ at grid point $i$ with an intensity $\xi_i$
determined by a local average of the boundary datum around $x_i$.
The mesh of the grid (both in time and space) will be $1/n$, thus
$n\in \N$, $n\to\infty$ is a convenient parameter 
to rule the macroscopic limit:
for instance the global number of particles will have order $N=N(n)\simeq n^2$.
Weights (or intensities) $\omega_i$ of particles will correspond to local averages
of the initial datum $\omega_0$ or the boundary datum $g$ around the starting point of the trajectory of $x_i$.

\emph{Interaction of particles} is given by a regularized version of the singular kernel $K$,
obtained by applying the Neumann heat semigroup,
\begin{equation}
    K_n(x,y)=\int_D K(x,z)p_{\epsilon}(z,y)dz, \quad \epsilon=\epsilon(n), \quad x,y\in D,
\end{equation}
with $p_\epsilon$ the Neumann heat kernel on $D$ and $\epsilon\to 0$ with a
slow enough rate in terms of $n$.
Kernel smoothing by means of the heat semigroup $P_t$, compared for instance to smoothing by convolution with bump functions,
has many advantages in our setting: it is well suited for regularizing functions keeping their support in $D$,
$P_t$ preserves the $L^1$ norm of non-negative functions,
and precise estimates on $p_t$ and its derivatives are available.

The interaction term of \eqref{eq:particleSDE} also includes a cutoff,
\begin{equation}
    F:\R^2\to\R^2,\quad 
    F(v)=\frac{v}{|v|}(|v|\wedge M), \quad M>0,
\end{equation}
(which is Lipschitz continuous, and its Lipschitz constant is uniformly bounded in $M>0$).
In principle, the presence of the cutoff makes it so that the macroscopic behavior of the particle system 
is described by a different PDE, that is
\begin{equation}\label{eq:cneuNS}\tag{c-NNS}
        \begin{cases}
            \de_t \omega+\operatorname{div}(F(K[\omega])  \omega) =\nu \Delta \omega,\quad \text{in }D\times [0,T],\\
            \nabla\omega\cdot \hat n=g, \quad \quad \text{in }\de D\times (0,T), 
        \end{cases}
\end{equation}
coinciding with \eqref{eq:neuNS} only when the velocity field in the latter is bounded.
As revealed by suitable \emph{a priori} estimates, this is in fact the case, so the cutoff $F$
is just a technical device that will naturally disappear in the macroscopic limit taking $M>0$ large enough.
Regularization of interactions is important to ease uniform convergence of empirical measures and well-posedness of the SDE system.

\emph{Brownian noise} acting independently on each particle models viscosity,
and boundary terms $-dk_i$ are defined so that $\de D$ acts on particles as a reflecting boundary,
in the standard setting of \cite{Stroock1971,Lions1984}. In terms of boundary conditions
for the macroscopic limit, this would produce the null Neumann condition $\hat n\cdot \nabla\omega=0$,
so an additional effect must be included to model the source $g$.

Our main result concerns the uniform convergence (in space)  of kernel-smoothed 
empirical measures of the particle system under consideration, 
\begin{equation}\label{eq:empiricalmeasures}
    S^n(t)=\sum_{i\in A_t^n}\omega_i \delta_{x_i(t)}, 
    \quad \omega^n(t,x)=P_{\epsilon}S^n(t),
\end{equation}
where, $P_\epsilon$ stands for the action of Neumann heat semigroup on measures,
and $A_t^n$ collects the indices of particles generated up to time $t$.
The generation of new particles at the boundary produces discontinuities
in time at the level of $\omega_n(t,x)$, therefore our limit theorem is going to
be set in the space of $C(\bar D)$-valued \emph{càdlàg} functions,
$\DD([0,T],C(\bar D))$, endowed with the usual Skorohod topology.

Informally, our main result (\autoref{thm:main}) is the following:
\emph{the regularized empirical measures $\omega_n(t,x)$ converge as $n\to\infty$, almost surely in the topology of $\DD([0,T],C(\bar D))$, to the unique weak solution of \eqref{eq:neuNS}
with Lipschitz continuous initial datum $\omega_0$ 
and bounded Neumann boundary datum $g$.}
As detailed below, we actually obtain a stronger convergence in Sobolev norms.
We will in fact assume that $g\geq 0$ in the exposition of our argument,
and remove this further assumption in the later \autoref{sec:generalid},
see \autoref{thm:mainsign}.

The proof or \autoref{thm:main} consists in a compactness argument, 
therefore the technical core of this paper resides in uniform estimates on the particle system.
The main additional difficulty compared to the works \cite{Flandoli2019,Flandoli2020} inspiring our technique is of course the presence of the boundary,
which complicates many analytic operations. 
For instance, the absence of translation invariance in the models under consideration
prevents derivatives and heat semigroups from commuting, so one has to resort instead to
suitable gradient estimates on the heat kernel in order to exploit the
regularizing properties of heat semigroup.
Uniform convergence at time $0$ and at the boundary also requires a careful control due to particle generation,
but we are able to deal with rather natural assumptions on $\omega_0,g$ 
(as compared with the obtained convergence),
and to avoid further technical hypothesis on initial data such as \cite[Assumption 1.1.3]{Flandoli2019} (a restricting requirement on the relation between the scale of kernel smoothing and sample size).

Beside the improved notion of convergence to the PDE we are able to obtain, the main point of our work is the approach to particle approximation for \eqref{eq:neuNS}: our approach is almost completely functional analytic
and based on semigroup theory,
in contrast with results in the perhaps more classical framework of propagation of chaos such as \cite{Jourdain2004,Fournier2014}. The motivation for our approach is that our technique circumvents 
the need for fine controls on particle dynamics.
We regard this as a step forward in the scope of attempting to build
a similar particle approximation of the physical no-slip Navier-Stokes equations
\eqref{eq:2dNS}, since in that case particle creation has to be linked with
the velocity at the boundary induced by the bulk of the fluid, 
making single particle dynamics very hard to treat (as already commented
in \cite{Jourdain2004}).

The particle system under analysis can be thought of as a (regularized) 
stochastic version of the well-known point vortex system.
However, there are some relevant differences concerning in particular the effect
of the boundary on the vortex dynamics, see \autoref{rmk:pointvortex} below.
Let us only briefly discuss here some qualitative aspects for the sake
of a comparison with the related literature.
Due to the regularization of interaction at the microscopic level
our approximants share some features with the systems of \emph{moderately interacting}
particles (see \cite{FlandoliHuang2021,Flandoli2019} for a comparison), but our focus is on the approximation procedure rather than on the scaling of
$\epsilon=\epsilon(n)$.
If we were to omit the regularization of the interaction kernel and the cutoff $F$
(and neglecting for a moment the complications of boundary effects),
the scaling of weights $\omega_i$ would make our particle system \eqref{eq:particleSDE}
a mean-field rescaled version of the stochastic point vortex model.
While the mean field limit of deterministic point vortices system
towards 2d Euler's equation in the well-posedness class is a recent result \cite{Rosenzweig2022}, our (stochastic) result (towards 2d Navier-Stokes) is of course closer to the ideas of \cite{Marchioro1982,Meleard2000}.
Concerning the relation with propagation of chaos theory, besides the above references closely related to our arguments, we refer for completeness to the work of Jabin-Wang \cite{Jabin2018} 
on stochastic particle systems with general interactions (on full space),
including stochastic vortex dynamics, for which we also mention 
\cite{Coghi2016} discussing more general noise.
The study of propagation of chaos in stochastic particle systems
in bounded domain dates back at least to \cite{Sznitman1984},
we refer to \cite{Coghi2022} for a discussion and recent results.

\section{Functional Analytic Setup}\label{sec:analfun}

In what follows, $dx$ denotes the Lebesgue measure restricted to the domain $D$, and $d\sigma(x)$ the (1-dimensional) volume measure on the smooth boundary $\de D$. In order to lighten notation, we will simply write $L^p=L^p(D,dx)$, avoiding ambiguity by explicitly indicating the underlying space for any different $L^p(X)$.

We denote by $\B(X)$, $\B_b(X)$ respectivly the spaces real-valued Borel and bounded Borel functions on a topological space $X$.

Landau $O$'s and $o$'s have their usual meaning, subscripts indicating eventual dependence on parameters.
The symbol $C$ will denote a positive constant, possibly differing in any of its occurrence
even in the same formula, depending only on eventual subscripts.
The expression $A\simeq_{a,b} B$ indicates that $B$ is both an upper and lower bound for $A$
up to multiplicative constants depending only on possible subscripts $a,b$. 
Expressions $A\lesssim_{a,b} B$ or $A\gtrsim_{a,b} B$ indicate respectively 
an upper and lower bound in the same sense.

If $p\in [1,\infty]$ we denote by $p'$ its conjugate exponent, $1/p+1/p'=1$.

\subsection{Sobolev Spaces and Neumann Heat Semigroup}
For any $\alpha\in \R,\ p\geq 1$ the \emph{Bessel potential space} is defined by
\begin{equation*}
	H^{\alpha,p}(\R^d) =\set{u\in \S'(\R^d): 
	\norm{u}_H^{\alpha,p}(\R^d)=\norm{\F^{-1}((1+\abs{\cdot}^2)^{\alpha/2}\F u(\cdot))}_{L^p(\R^d)}<\infty}.
\end{equation*}
If $D\subseteq \R^d$ is a smooth bounded domain we set
\begin{equation*}
H^{\alpha,p}(D)=\set{u|_{D}: u\in H^{\alpha,p}(\R^d)}, \quad
\norm{u}_{H^{\alpha,p}(D)}:=\inf \set{\norm{w}_{H^{s,p}(\R^d)}:\ w|_{D}=u}.
\end{equation*}

The following statement summarizes standard results,
we refer to monographs \cite{grubb2016regularity,trie1983fun,trie1995fun} for a complete discussion.

\begin{proposition} It holds:
	\begin{itemize}
		\item (Sobolev embeddings) if $1<p\leq q<\infty$, and $\alpha\geq\beta\geq 0$,
		\begin{equation*}
			H^{\alpha,p}(D)\hookrightarrow H^{\beta,q}(D), \quad \alpha-\frac{d}{p}\geq \beta-\frac{d}{q};
		\end{equation*}
		embeddings are compact if the latter inequality is strict;
		\item (Morrey's inequality) if $\alpha-\frac{d}{p}>l+\lambda$, $l\in \N$ and $\lambda\in [0,1)$,
		\begin{equation*}
			H^{\alpha,p}(D)\stackrel{c}{\hookrightarrow}C^{l,\lambda}\left(\bar{D}\right).
		\end{equation*} 
	\end{itemize}
\end{proposition}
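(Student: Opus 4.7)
The plan is to reduce both statements to their full-space counterparts and then transfer them to the bounded smooth domain $D$ via the defining restriction $H^{\alpha,p}(D)=\{u|_D:u\in H^{\alpha,p}(\R^d)\}$. The crucial analytic input is the existence of a bounded linear extension operator $E:H^{\alpha,p}(D)\to H^{\alpha,p}(\R^d)$ valid for all $\alpha\in\R$ and $p\in(1,\infty)$: for smooth $\de D$ this is furnished by Stein's extension theorem, or more uniformly in $\alpha$ by Rychkov's universal extension. Once $E$ is available, any continuous (resp.\ compact) embedding $H^{\alpha,p}(\R^d)\hookrightarrow X(\R^d)$ descends to a continuous (resp.\ compact) embedding $H^{\alpha,p}(D)\hookrightarrow X(D)$ by composing with $E$, applying the embedding on $\R^d$, and restricting back to $D$; throughout I would keep the support of $E u$ confined to a fixed neighborhood of $\bar D$ by a smooth cutoff so that compactness arguments apply cleanly.

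For the full-space continuous Sobolev embedding I would write $u=G_\alpha\ast f$ where $G_\alpha$ is the Bessel kernel and $f=(1-\Delta)^{\alpha/2}u\in L^p(\R^d)$. Since $G_\alpha$ behaves like the Riesz kernel $|x|^{\alpha-d}$ near the origin and decays exponentially at infinity, Hardy--Littlewood--Sobolev gives $\|u\|_{L^q}\lesssim\|f\|_{L^p}$ whenever $\alpha-d/p=-d/q$ and $\alpha>0$; the general case $0\leq\beta\leq\alpha$ follows by applying $(1-\Delta)^{\beta/2}$ and re-running the same argument, or more symmetrically by Mihlin multiplier theory. For Morrey's inequality, in the regime $\alpha-d/p>l+\lambda$, I would first reduce to $l=0$ by differentiating $l$ times (each derivative costing one order in the Bessel scale), then estimate increments $u(x)-u(y)=\int(G_\alpha(x-z)-G_\alpha(y-z))f(z)\,dz$ in Hölder seminorm by splitting the integration region into $\{|z-x|\lesssim|x-y|\}$ and its complement and using sharp pointwise bounds on $G_\alpha$ and $\nabla G_\alpha$ together with Hölder's inequality.

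The compactness assertion under strict inequality follows from a standard Rellich--Kondrachov argument on the bounded domain. Given a bounded sequence $(u_n)$ in $H^{\alpha,p}(D)$, extend to a bounded sequence $(w_n)$ in $H^{\alpha,p}(\R^d)$ with common compact support. A Littlewood--Paley decomposition $w_n=\sum_j \Delta_j w_n$ then shows that the high-frequency tail $\sum_{j>J}\Delta_j w_n$ has arbitrarily small $H^{\beta,q}(\R^d)$ norm for $J$ large, exploiting the strict gap $\alpha-d/p>\beta-d/q$; the low-frequency part $\sum_{j\leq J}\Delta_j w_n$ has uniformly compact support and uniformly bounded derivatives, hence lies in a relatively compact subset of $L^q(\R^d)$ by Fréchet--Kolmogorov. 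Diagonal extraction and restriction to $D$ produce the desired convergent subsequence in $H^{\beta,q}(D)$. The main obstacle throughout is securing the extension operator at the fractional Bessel scale; once this is granted, all remaining steps are classical, which is why the authors quite reasonably refer the reader to \cite{grubb2016regularity,trie1983fun,trie1995fun} rather than reproduce the argument.
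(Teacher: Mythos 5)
The paper does not actually prove this proposition: it is stated as a summary of standard results with a pointer to \cite{grubb2016regularity,trie1983fun,trie1995fun}, so there is no internal argument to compare against. Your sketch is the classical route (extension to $\R^d$, Bessel-potential representation plus Hardy--Littlewood--Sobolev for the embedding, kernel estimates for Morrey, Rellich--Kondrachov-type compactness), and in outline it is correct. Two small repairs are needed. First, the low-frequency part $\sum_{j\le J}\Delta_j w_n$ is \emph{not} compactly supported (convolution with the Schwartz kernels of the Littlewood--Paley blocks destroys compact support); what is true, and what the Fr\'echet--Kolmogorov criterion actually requires, is uniform decay/tightness in $L^q$ together with equicontinuity, both of which follow from the common compact support of the $w_n$ and the fixed frequency cutoff, and one should also say that on frequencies $\lesssim 2^J$ the $H^{\beta,q}$ and $L^q$ norms are comparable so that $L^q$-compactness of the low-frequency block upgrades to $H^{\beta,q}$-compactness. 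Second, the Morrey statement in the paper carries the superscript $c$, i.e.\ it asserts a \emph{compact} embedding into $C^{l,\lambda}(\bar D)$; your argument only yields continuity. Since the inequality $\alpha-\tfrac{d}{p}>l+\lambda$ is strict, pick $\lambda'\in(\lambda,1)$ with $\alpha-\tfrac{d}{p}>l+\lambda'$, embed continuously into $C^{l,\lambda'}(\bar D)$ and use the Arzel\`a--Ascoli compactness of $C^{l,\lambda'}(\bar D)\hookrightarrow C^{l,\lambda}(\bar D)$. Finally, a simplification worth noting: with the quotient-norm definition $\norm{u}_{H^{\alpha,p}(D)}=\inf\{\norm{w}_{H^{\alpha,p}(\R^d)}: w|_D=u\}$ used in the paper, the \emph{continuity} of both embeddings descends from the full-space statements by taking near-optimal extensions, so the bounded linear extension operator (Stein/Rychkov) is only genuinely needed, together with a fixed cutoff, for the compactness argument.
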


If $\alpha>1+1/p$, $p\in (1,\infty)$ and $u\in H^{\alpha,p}(D)$ then the trace of $\nabla u$ on $\de D$
(defined as usual as the extension of the restriction to $\de D$ for smooth functions) is of class $L^p(\de D)$.
Therefore, if $\hat n:\de D\to \R^2$ is the inward pointing unit normal vector field of $\de D$,
the following definition of Sobolev space with Neumann boundary conditions is meaningful:
\begin{equation*}
    H^{\alpha,p}_{Neu}\coloneqq \set{u\in H^{\alpha,p}(D),\quad \nabla u|_{\de D}\cdot \hat n\equiv 0}.
\end{equation*}
We refer to \cite[Sec. 4.3.3]{trie1995fun} for a 
more general discussion of boundary conditions for Sobolev spaces, and to \cite[Sec. 2.9.3]{trie1995fun}
for further regularity of the mentioned trace operator in terms of Besov spaces on $\de D$.

Consider the heat equation with Neumann boundary conditions
\begin{equation}\label{eq:neumannproblem}
\begin{cases}
\de_t u_t(x)= \nu \Delta u_t(x), & x\in D,\,t>0,\\
\hat{n}\cdot \nabla u_t(x)=0, &x\in \de D,\,t>0,\\
u_0(x)=f(x), &x\in D,
\end{cases}
\end{equation}
with $f$ a bounded measurable function on $D$. It is well-known that the PDE problem is well-posed,
we denote its solution by $P_tf(x)=u_t(x)$, $t\geq 0$, $x\in D$; moreover $P_tf\in C^\infty(D)$
for all $t>0$. Operators $P_t$, $t>0$ form a Markov semigroup (in the sense of \cite{Bakry2014}),
associated to the \emph{reflected Brownian motion} in $D$ (see \autoref{ssec:BMreflecting} below).

We now recall classical facts concerning the action of the semigroup $P_t$ on $L^p(D)$
and Sobolev spaces: we refer to \cite[Section 7.3]{Pazy1983} for a detailed discussion.
For all $p\in [1,\infty)$, $P_t$ extends to an analytic semigroup of contractions 
$P_t=e^{-tA_p}:L^p\to L^p$ with infinitesimal generator
\begin{equation*}
    -A_p:\D(A_p)=H^{2,p}_{Neu}\subset L^p(D)\rightarrow L^p(D),
\end{equation*}
coinciding with the Laplace operator $\nu \Delta$ on $C^\infty_c(D)$.
The spectrum of $A_p$ is included in $[0,+\infty)$. 
A first consequence of the fact that $P_t$ is an analytic semigroup
is \emph{ultracontractivity} on $L^p$ spaces:
we refer to \cite[Section 7.3.2]{Arendt2004} for the following result.

\begin{proposition}
	If $1\leq p<q\leq \infty$, then it holds
	\begin{equation}\label{eq:ultracontractivity}
		\norm{P_t}_{L^p\to L^q}\leq \frac{C_{T,\nu,p,q}}{t^{1/p-1/q}}, \quad t\in[0,T].
	\end{equation}
\end{proposition}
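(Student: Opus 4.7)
The plan is to reduce the general inequality to the extreme case $(p,q)=(1,\infty)$ and then conclude by Riesz--Thorin interpolation. First I would establish
\[
\norm{P_t}_{L^1\to L^\infty}\leq C_{T,\nu}\, t^{-d/2}, \quad t\in(0,T],
\]
in our dimension $d=2$. The most direct route is through a pointwise Gaussian upper bound on the Neumann heat kernel $p_t(x,y)$, of the form $p_t(x,y)\leq C(\nu t)^{-d/2}\exp(-c|x-y|^2/(\nu t))$, which is by now classical parabolic theory for smooth bounded domains with reflecting boundary (obtainable, for instance, via Aronson-type arguments, or equivalently through Nash and Sobolev inequalities applied to the Dirichlet form $\mathcal{E}(u)=\nu\int_D|\nabla u|^2\,dx$). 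Alternatively, one can follow the purely functional-analytic route of \cite{Arendt2004}: ultracontractivity of the analytic semigroup $e^{-tA_p}$ is a direct consequence of a Nash inequality for $\mathcal{E}$, which on a smooth bounded $D$ follows from the Sobolev embedding $H^{1,2}(D)\hookrightarrow L^r(D)$ (any $r<\infty$ works in the two-dimensional setting).

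Since $P_t$ is a Markov semigroup of contractions, we also have $\norm{P_t}_{L^r\to L^r}\leq 1$ for every $r\in[1,\infty]$. Interpolating via Riesz--Thorin between the $L^1\to L^\infty$ bound above and either the $L^1\to L^1$ or $L^\infty\to L^\infty$ contraction yields, for every $1\leq p\leq q\leq \infty$,
\[
\norm{P_t}_{L^p\to L^q}\leq C_{T,\nu,p,q}\, t^{-\frac{d}{2}\left(\frac{1}{p}-\frac{1}{q}\right)}, \quad t\in(0,T],
\]
which in dimension $d=2$ is exactly the stated estimate. The dependence on $T$ is merely cosmetic: it serves to bundle the genuine small-time singularity $t^{-d/2}$ together with the large-$t$ regime, in which $P_t$ already maps $L^1$ continuously into $L^\infty$ (converging exponentially to the projection onto constants).

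The main technical obstacle is the Gaussian (or equivalently Nash-type) bound on the Neumann heat kernel: it must preserve the small-time $t^{-d/2}$ scaling despite the presence of the reflecting boundary, which is precisely the geometric complication that distinguishes the problem on $D$ from that on $\R^d$. Once this is secured, the rest of the argument is routine interpolation. Fortunately this is classical material, and is exactly what \cite[Section~7.3.2]{Arendt2004} provides for our Neumann setting.
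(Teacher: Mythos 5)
Your main line of argument is correct and is essentially the standard proof behind the paper's treatment of this statement: the paper offers no proof at all, referring instead to \cite[Section 7.3.2]{Arendt2004}, and the key ingredient you invoke, the Gaussian upper bound for the Neumann heat kernel on a smooth bounded domain, is exactly what the paper records later as \eqref{eq:kernelestimatePW}; its $d=2$ scaling $p_t(x,y)\le C t^{-1}e^{-|x-y|^2/(ct)}$ gives $\norm{P_t}_{L^1\to L^\infty}\lesssim t^{-1}$ for $t\in(0,T]$, and combining this with the Markov contractions $\norm{P_t}_{L^r\to L^r}\le 1$ and Riesz--Thorin (with $r$ chosen so that the interpolated pair is $(p,q)$, or by a two-step interpolation) yields \eqref{eq:ultracontractivity}; the restriction to $t\in[0,T]$ indeed absorbs the large-time regime, as you note.

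One caveat on your ``alternative'' purely functional-analytic route: deriving the Nash inequality from the subcritical Sobolev embedding $H^{1,2}(D)\hookrightarrow L^r(D)$, $r<\infty$, does not give the exponent claimed in \eqref{eq:ultracontractivity}. Interpolating $\norm{u}_{L^2}\le \norm{u}_{L^1}^{\theta}\norm{u}_{L^r}^{1-\theta}$ with $\tfrac12=\theta+\tfrac{1-\theta}{r}$ and then using the embedding produces a Nash-type inequality of ``effective dimension'' $n=2(1-\theta)/\theta>2$ for every finite $r$ (the sharp case $\theta=\tfrac12$ would need $r=\infty$, which fails in two dimensions), and the resulting ultracontractivity bound decays only like $t^{-\frac{n}{2}\left(\frac1p-\frac1q\right)}$ with $n>2$, strictly worse than the stated $t^{-(\frac1p-\frac1q)}$. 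To run the Nash route with the correct 2D exponent one needs the genuine Nash/Gagliardo--Nirenberg inequality $\norm{u}_{L^2}^2\lesssim \norm{u}_{H^{1,2}(D)}\norm{u}_{L^1}$ on $D$, obtainable e.g.\ by Stein extension to $\R^2$ and the whole-space Nash inequality (or from $W^{1,1}(D)\hookrightarrow L^2(D)$), not from the $H^{1,2}\hookrightarrow L^r$ embeddings. Since your primary route via the Gaussian kernel bound already suffices, this is a repairable side remark rather than a gap in the proof.
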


Another consequence of analiticity of $P_t$ is that we can 
introduce fractional powers of $I+A_p$ as operators on $L^p$ by means of 
\begin{equation*}
\left(I+A_p\right)^{-\alpha}=\frac1{\Gamma(\alpha)}\int_0^\infty t^{\alpha-1}e^{-t}P_tdt,
\quad \alpha>0,
\end{equation*}
where the integral converges in the uniform operator topology and defines injective operators
(\emph{cf.} \cite[Section 2.6]{Pazy1983}). 
We set
\begin{equation*}
	(I+A_p)^0=I, \quad (I+A_p)^\alpha=((I+A_p)^\alpha)^{-1} \text{ for }\alpha>0,
\end{equation*}
and denote by
\begin{equation*}
    \HH^{\alpha,p}= \D((I+A_p)^{\alpha/2})
\end{equation*}
the domain of fractional powers for all $\alpha\in\R$.

\begin{proposition}\label{prop:fractionalpowers} Let $p\in (1,\infty)$. It holds:
		\begin{itemize}
		\item (group property) for $\alpha,\beta\in\R$ and $u\in \HH^{2\gamma,p}$, $\gamma=\alpha\vee \beta\vee (\alpha+\beta)$,
		\begin{equation*}
			(I+A_p)^{\alpha+\beta}u=(I+A_p)^{\alpha}(I+A_p)^{\beta}u;
		\end{equation*}
		\item (adjoint) $A_p^*=A_{p'}$ with $\frac{1}{p}+\frac{1}{p'}=1$;
		\item (identifications of the domain\footnote{
		These identifications and Sobolev embeddings on $D$ are closely related to the ultracontractivity of $P_t$,
		we refer again to \cite{Arendt2004} for a discussion.
		}) $\HH^{2\alpha,p}$ can be identified with
		the interpolation space $[L^p, \D(A_p)]_\alpha$ for all $\alpha\in [0,1]$;
		moreover it holds
		\begin{equation*}
		\HH^{\alpha,p}=\begin{cases}
		H^{\alpha,p}(D) \quad \quad \quad \quad \quad \quad \quad \quad\quad\quad\  \textit{for}\quad 0\leq\alpha<1+\frac{1}{p};\\
		\{u\in H^{\alpha,p}(D):\hat{n}\cdot \nabla u|_{\partial D}=0  \} \quad \textit{for}\quad  1+\frac{1}{p}<\alpha<3+\frac{1}{p};\\
		\{u\in H^{\alpha,p}(D):\hat{n}\cdot \nabla A^l_pu|_{\partial D}=0\ \forall l\in\N,\ l\leq k  \}\\  \quad  \quad  \quad \quad \quad\quad\quad \quad\quad\quad\quad\quad\quad\quad\textit{for}\quad 1+2k+\frac{1}{p}<\alpha<3+2k+\frac{1}{p}.\\
		\end{cases}
		\end{equation*}
		\item (contractivity) for $\alpha\in\R$, $P_t(L^p)\subset \HH^{2\alpha,p}$ for all $t>0$
		and operators $P_t$ and $(I+A_p)^\alpha$ commute on $\HH^{2\alpha,p}$;
		moreover
		\begin{equation}\label{eq:contractivity}
		\norm{\left(I+A_p\right)^\alpha P_t}_{L^p\to L^p}\leq \frac{C_{\alpha,\nu,p}}{t^\alpha}, \quad t\in [0,T].
		\end{equation}
	\end{itemize} 
\end{proposition}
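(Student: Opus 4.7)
The plan is to verify each of the four bullets by assembling standard semigroup-theoretic facts, using that $-A_p$ generates a bounded analytic semigroup on $L^p$ for every $p \in (1,\infty)$. The group property follows at once from the abstract theory of fractional powers of sectorial operators (as in \cite[Sec.\ 2.6]{Pazy1983}): once $(I+A_p)^{-\alpha}$ is defined via the Balakrishnan integral for $\alpha>0$ and inverted for $\alpha<0$, the identity $(I+A_p)^{\alpha+\beta}u=(I+A_p)^\alpha(I+A_p)^\beta u$ on $\HH^{2\gamma,p}$ is a formal consequence of Fubini applied to the defining integral. The contractivity estimate \eqref{eq:contractivity} similarly follows from analyticity: for a bounded analytic semigroup one has $\|(I+A_p)^\alpha P_t\|_{L^p\to L^p}\lesssim t^{-\alpha}$ for $t\in(0,T]$ and $\alpha\geq 0$, and the case $\alpha<0$ follows from boundedness of $(I+A_p)^\alpha$ together with the contractivity of $P_t$. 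The commutation of $P_t$ with $(I+A_p)^\alpha$ on $\HH^{2\alpha,p}$ is a direct consequence of the fact that $P_t$ commutes with its generator on the domain.

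For the adjoint statement, the idea is to start from the case $p=2$, where $A_2$ is the non-negative self-adjoint realization of $-\nu\Delta$ with Neumann boundary conditions (whence $A_2^*=A_2$). For general $p\in(1,\infty)$, one integrates by parts against smooth test functions satisfying Neumann conditions to verify that $A_p$ and $A_{p'}$ are formal adjoints on a common core, and uses the $L^{p'}$-boundedness of $P_t$ (established by interpolation from the $L^1$--$L^\infty$ bounds) to extend this to the whole domain.

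The main obstacle is the identification of the domains $\HH^{\alpha,p}$ with Sobolev spaces with boundary conditions. My plan is to begin with the endpoints $\HH^{0,p}=L^p=H^{0,p}(D)$ and $\HH^{2,p}=\D(A_p)=H^{2,p}_{Neu}$, and then invoke the identification $\HH^{2\alpha,p}=[L^p,\D(A_p)]_\alpha$ coming from analyticity of $P_t$. For $0\le\alpha<1+1/p$ the normal trace of the gradient is not meaningful, so interpolation produces exactly $H^{\alpha,p}(D)$ with no boundary condition, while for $1+1/p<\alpha<2$ the trace is well-defined and is preserved by interpolation between $L^p$ and $H^{2,p}_{Neu}$, yielding $H^{\alpha,p}_{Neu}$; the threshold $\alpha=1+1/p$ is precisely the trace regularity cutoff. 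For higher fractional scales $1+2k+1/p<\alpha<3+2k+1/p$ the argument proceeds inductively: using the group property, $u\in\HH^{\alpha,p}$ iff $(I+A_p)^k u\in \HH^{\alpha-2k,p}$, and iterating the $L^p$--elliptic regularity for the Neumann problem forces the compatibility conditions $\hat n\cdot\nabla A_p^l u|_{\de D}=0$ for all $l\le k$.

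The delicate point is to check that the interpolation identity genuinely identifies the boundary condition (as opposed to only the regularity) in the intermediate range: this relies on the fact that the retraction/coretraction between $D$ and $\R^d$ preserving the Neumann condition respects complex interpolation, for which one appeals to the detailed analysis in \cite{trie1995fun}. Once the identification is established for all $\alpha\geq 0$, the case $\alpha<0$ follows by duality using the adjoint statement already proved, identifying $\HH^{-\alpha,p}$ with the dual of $\HH^{\alpha,p'}$ relative to the $L^2$ pairing.
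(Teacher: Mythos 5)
The paper offers no proof of this proposition: it is presented as a summary of classical facts with references to \cite{Pazy1983}, \cite{Arendt2004} and \cite{trie1995fun}, and your outline reconstructs precisely the standard arguments from those sources (Balakrishnan calculus and analyticity for the group property, the commutation and \eqref{eq:contractivity}; symmetry/duality of the Neumann semigroup for $A_p^*=A_{p'}$; complex interpolation between $L^p$ and $H^{2,p}_{Neu}$ with the trace threshold $1+1/p$, elliptic regularity and induction via the group property for the domain identifications), so it is essentially the same route the paper implicitly relies on. The one step to tighten is your treatment of $\alpha<0$ in \eqref{eq:contractivity}: boundedness of $(I+A_p)^{\alpha}$ together with contractivity of $P_t$ yields a bound uniform in $t$, not the stated $C\,t^{-\alpha}$, which for $\alpha<0$ would force the operator norm to vanish as $t\to 0$ and is therefore false near $t=0$; the estimate is meaningful (and is only ever used in the paper) for $\alpha\ge 0$, so that clause should be read, and proved, with $\alpha\ge 0$.
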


The action of $P_t$ on $f\in L^1$ is given by
\begin{equation*}
P_t f(x)=\int_D p_t(x,y)f(y)d y, \quad t>0,
\end{equation*}
where the \emph{heat kernel} $p_t(x,y)\geq 0,t>0,x,y\in \bar{D}$, is a smooth function, $p_t\in C^\infty(\bar D^2)$, $t>0$,
satisfying \eqref{eq:neumannproblem} with the
initial condition replaced by $u_t(x)\to \delta_y$ as $t\to 0$. 
The Neumann heat kernel can be controlled with the free heat kernel (on the whole plane) up to a multiplicative constant at the exponent, that is:
\begin{equation}\label{eq:kernelestimatePW}
        p_t(x,y)\leq \frac{C}{t\wedge |D|}e^{-|x-y|^2/(ct)},\quad x,y\in \bar D,
\end{equation}
with $C,c>0$ depending only on the domain $D$, \emph{cf.} \cite[Eq. (3.2)]{Wang2013}.
The following statement recalls the \emph{gradient estimates} on heat semigroup and kernel, we refer to \cite[Theorem1.2, Corollary 1.3, Lemma 3.1]{Wang2013} for a proof.

\begin{proposition}\label{prop:gradientestimates}
    There exist a constant $C>0$ depending only on $D$ such that,
    \begin{equation}\label{eq:gradientestimateBASE}
        \abs{\nabla P_t f(x)}\leq e^{Ct}P_t\pa{\abs{\nabla f}(x)},
        \quad t\geq 0,\,x\in D,\, f\in C^1_b(D).
    \end{equation}
    Moreover, there exist constants $C,c,c'>0$ depending only on $D$ such that
    \begin{equation}\label{eq:gradientestimatePW}
		\nabla p_t(x,y)\leq C (\one_{0<t\le 1} t^{-3/2}+\one_{t>1} e^{-c't})e^{-|x-y|^2/(ct)}.
	\end{equation}
	As a consequence, for all $t>0$ and $p\in (1,\infty)$,
	\begin{equation}\label{eq:gradientestimateOP}
	\norm{\nabla P_t}_{L^p\to C(\bar D)}\leq \frac{C_p}{(1\wedge t)^{1/2+1/p}}.
	\end{equation}
\end{proposition}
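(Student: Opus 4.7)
The plan is to establish the three estimates in the order they are stated, exploiting the fact that each subsequent bound can be extracted from the previous one combined with the Gaussian pointwise estimate \eqref{eq:kernelestimatePW} that is already in hand.

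\textbf{Step 1: the Bakry–Émery type bound \eqref{eq:gradientestimateBASE}.} I would prove it by a reflection coupling argument for the reflecting Brownian motion $X_t$ associated with $P_t$, exploiting the probabilistic representation $P_t f(x) = \EE[f(X_t^x)]$. For $x,y\in D$ close, construct on the same probability space two reflecting Brownian motions $X_t^x,X_t^y$ such that $|X_t^x - X_t^y| \le e^{Ct}|x-y|$, the constant $C$ accounting for the second fundamental form of the smooth boundary $\de D$ (the convex/concave parts of $\de D$ generate contraction and dilation of the coupling distance, respectively, and smoothness of $\de D$ gives uniform bounds). Dividing by $|x-y|$ and passing to the limit yields $|\nabla P_t f(x)| \le e^{Ct}\EE[|\nabla f(X_t^x)|]= e^{Ct}P_t(|\nabla f|)(x)$. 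An analytic alternative is to differentiate $H(s)=P_s(|\nabla P_{t-s}f|^2)$ along the semigroup and absorb the boundary term via the Neumann condition using a Reilly-type identity, which produces the same exponential prefactor via Gronwall.

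\textbf{Step 2: the pointwise kernel gradient bound \eqref{eq:gradientestimatePW}.} The idea is to combine \eqref{eq:gradientestimateBASE} with \eqref{eq:kernelestimatePW} through the semigroup identity $p_t(x,y)=P_{t/4}p_{t/2}(\cdot,y)(x)$ applied in the $x$ variable, together with interior and boundary Schauder estimates for the parabolic Neumann problem, which give $|\nabla_x p_{s}(x,y)|\lesssim s^{-1/2}\sup_{B(x,\sqrt s)\cap \bar D}p_{s/2}(\cdot,y)$ on the scale $s=t/2\le 1$. Plugging the Gaussian bound \eqref{eq:kernelestimatePW} on $p_{s/2}(z,y)$ and observing that the Gaussian envelope centered at $y$ is essentially constant on a ball of radius $\sqrt s$ around $x$ up to a redefinition of $c$, one obtains $|\nabla_x p_t(x,y)|\lesssim t^{-3/2}e^{-|x-y|^2/(ct)}$ for $t\le 1$. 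For $t>1$ one decomposes $p_t=P_{t-1}p_1(\cdot,y)$, uses the spectral gap of $A_p$ on zero-mean functions (equivalently, the exponential convergence of the Neumann semigroup on $L^2_0$) together with the short-time bound just obtained on $\nabla p_1$ to extract the factor $e^{-c' t}$.

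\textbf{Step 3: the operator bound \eqref{eq:gradientestimateOP}.} This is an immediate consequence of \eqref{eq:gradientestimatePW} and Hölder's inequality: writing $\nabla P_t f(x)=\int_D \nabla_x p_t(x,y) f(y)\,dy$ and applying Hölder with conjugate exponent $p'$ reduces matters to computing $\|\nabla_x p_t(x,\cdot)\|_{L^{p'}(D)}$. For $t\le 1$, the Gaussian factor integrates to a constant times $t^{1/p'}$ times $t^{-3/2}$, so the $L^{p'}$-norm is of order $t^{-3/2+1/p'}= t^{-1/2-1/p}$; for $t\ge 1$ the exponential tail yields a uniform bound. Taking supremum in $x\in\bar D$ completes the argument.

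\textbf{Main obstacle.} The delicate point is Step 1, where the exponential constant $e^{Ct}$ and its independence of the point $x$ rest on quantitative control of how reflection at $\de D$ perturbs the coupling distance; equivalently, on the boundary term arising from integration by parts of $|\nabla P_{t-s}f|^2$ against $P_s$, which involves the second fundamental form of $\de D$. Once this is in place, Steps 2 and 3 are relatively mechanical manipulations of the Gaussian envelope and Hölder's inequality.
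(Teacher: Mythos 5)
The paper does not actually prove this proposition: all three estimates are quoted from \cite{Wang2013} (Theorem 1.2, Corollary 1.3, Lemma 3.1), so any self-contained argument such as yours is by construction a different route. Your Steps 2 and 3 are sound in outline and are essentially the standard way such bounds are obtained: a parabolic regularity (Schauder/localization) estimate at scale $\sqrt{t}$ combined with the Gaussian bound \eqref{eq:kernelestimatePW} gives \eqref{eq:gradientestimatePW} for $t\le 1$, the spectral gap applied to $p_1(\cdot,y)-|D|^{-1}$ gives the factor $e^{-c't}$ for $t>1$ (the Gaussian factor being harmless there since $|x-y|$ is bounded), and the H\"older computation $\|\nabla_x p_t(x,\cdot)\|_{L^{p'}}\lesssim t^{-3/2+1/p'}=t^{-1/2-1/p}$ indeed yields \eqref{eq:gradientestimateOP}.

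The genuine gap is in Step 1. With the synchronous (or reflection) coupling, the reflection terms enter the evolution of $|X^x_t-X^y_t|$ through the boundary local times $d|k^x|,d|k^y|$, not through $dt$; on concave portions of $\partial D$ the distance can expand at rate proportional to $d|k|$, so what one gets pathwise is at best a factor $e^{C(|k^x|_t+|k^y|_t)}$, and local time is not bounded by $Ct$, so no Gronwall argument in $t$ closes the estimate. The claim that smoothness of $\partial D$ alone, via a bound on the second fundamental form, yields $|X^x_t-X^y_t|\le e^{Ct}|x-y|$ is therefore unjustified; in fact, in Wang's theory the commutation estimate \eqref{eq:gradientestimateBASE} with a constant depending only on elapsed time is essentially tied to convexity of the boundary. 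Your proposed analytic alternative hits the same obstruction: differentiating $P_s(|\nabla P_{t-s}f|^2)$ produces a surface integral weighted by the second fundamental form, which has a favorable sign exactly when $\partial D$ is convex and cannot be absorbed into a Gronwall inequality otherwise. The fix consistent with the paper is to use convexity of $\bar D$, which the paper assumes implicitly (it is used, e.g., in the proof of \autoref{lem:initialdatum}, and \cite{Wang2013} is stated for convex domains): for convex $D$ the synchronous coupling only contracts the distance at the boundary, and your limiting argument then gives \eqref{eq:gradientestimateBASE} directly, even without the exponential prefactor.
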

\noindent
As reported in \cite[Theorem 3.2]{Wang2013}, the latter implies the following:

\begin{corollary}\label{cor:riesztransform}
	For all $p> 1$, $\norm{\nabla (I+A_p)^{-1/2}}_{L^p\to L^p}\leq C_p$.
\end{corollary}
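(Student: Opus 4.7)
The plan is to read the bound off from the identification $\HH^{1,p} = H^{1,p}(D)$ supplied by the third bullet of Proposition~\ref{prop:fractionalpowers}. For every $p \in (1,\infty)$ the exponent $\alpha = 1$ lies in the range $0 \leq \alpha < 1 + 1/p$, so that bullet identifies the fractional-power domain with the Bessel potential space $H^{1,p}(D)$. Since $(I+A_p)^{1/2}$ is closed with bounded inverse on $L^p$, its graph norm is equivalent to $\|(I+A_p)^{1/2}\,\cdot\,\|_{L^p}$, which upgrades the set-theoretic identification to the norm equivalence
\begin{equation*}
    C_p^{-1}\, \|u\|_{H^{1,p}(D)} \leq \|(I+A_p)^{1/2} u\|_{L^p} \leq C_p\, \|u\|_{H^{1,p}(D)}, \qquad u \in \HH^{1,p}.
\end{equation*}

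Given $f \in L^p(D)$, I would then take $u = (I+A_p)^{-1/2} f$; by construction $u \in \HH^{1,p} = H^{1,p}(D)$ and $(I+A_p)^{1/2} u = f$. For any extension $w \in H^{1,p}(\R^2)$ with $w|_D = u$ one has $\|\nabla u\|_{L^p(D)} \leq \|\nabla w\|_{L^p(\R^2)} \leq \|w\|_{H^{1,p}(\R^2)}$, and passing to the infimum gives $\|\nabla u\|_{L^p(D)} \leq \|u\|_{H^{1,p}(D)}$. Combining with the norm equivalence above yields
\begin{equation*}
    \|\nabla (I+A_p)^{-1/2} f\|_{L^p} \leq \|(I+A_p)^{-1/2} f\|_{H^{1,p}(D)} \leq C_p\, \|f\|_{L^p},
\end{equation*}
which is the claim.

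The only substantive point is the identification $\HH^{1,p} = H^{1,p}(D)$ itself, which essentially packages the Riesz-transform bound, and is where the gradient estimates of Proposition~\ref{prop:gradientestimates} enter through the analysis of the semigroup on Bessel potential spaces. A self-contained derivation working directly from the pointwise bound \eqref{eq:gradientestimatePW} would instead represent
\begin{equation*}
    \nabla(I+A_p)^{-1/2} f(x) = \frac{1}{\sqrt{\pi}} \int_0^\infty s^{-1/2} e^{-s}\, \nabla_x P_s f(x)\, ds
\end{equation*}
and show that the kernel $\int_0^\infty s^{-1/2} e^{-s}\, \nabla_x p_s(x,y)\, ds$ satisfies Calderón--Zygmund type size and smoothness bounds via the Gaussian gradient estimate \eqref{eq:gradientestimatePW}; the resulting $L^p$-boundedness for $p \in (1,\infty)$ would then follow from standard singular-integral theory, with the $L^2$ endpoint being handled separately via the integration-by-parts identity $\|\nabla u\|_{L^2}^2 = \langle u, A_2 u\rangle \leq \|(I+A_2)^{1/2} u\|_{L^2}^2$. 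This is essentially the route taken in \cite{Wang2013}, and the verification of the Calderón--Zygmund smoothness condition on $y$ is the step I would expect to require the most care.
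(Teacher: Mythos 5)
Your proof is correct, but it follows a genuinely different route from the paper's. The paper offers no argument of its own: the corollary is stated as an immediate consequence of the gradient estimates of \autoref{prop:gradientestimates}, with the deduction delegated entirely to \cite[Theorem 3.2]{Wang2013}, where the $L^p$-boundedness of $\nabla(I+A_p)^{-1/2}$ is obtained from the commutation-type bound \eqref{eq:gradientestimateBASE} by a Bakry-style semigroup argument (not, incidentally, by the Calder\'on--Zygmund kernel analysis you sketch at the end, though that is a viable alternative). You instead read the bound off the identification $\HH^{1,p}=H^{1,p}(D)$ in the third bullet of \autoref{prop:fractionalpowers}: since $(I+A_p)^{1/2}$ is closed and boundedly invertible, the graph norm on $\HH^{1,p}$ is equivalent to $\norm{(I+A_p)^{1/2}\,\cdot\,}_{L^p}$, and the set identity upgrades to a norm equivalence --- strictly speaking via the closed graph theorem applied to the identity map between two Banach spaces both continuously embedded in $L^p$, which is worth saying explicitly --- after which $\norm{\nabla u}_{L^p(D)}\lesssim_p \norm{u}_{H^{1,p}(D)}$ (with a constant $C_p$ coming from $H^{1,p}(\R^2)=W^{1,p}(\R^2)$, not constant $1$ as written) finishes the job. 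This is legitimate within the paper, where \autoref{prop:fractionalpowers} is quoted as standard, and it buys brevity; what it costs is transparency about the logical source of the estimate: the identification of $\D((I+A_p)^{1/2})$ with $H^{1,p}(D)$ is precisely the kind of square-root/Riesz-transform statement the corollary encodes, and in parts of the literature it is itself established through such bounds, so your derivation leans on a fact at least as deep as the one being proved, whereas the paper's citation of \cite{Wang2013} traces the corollary back to the explicit gradient estimate, which is the mechanism the authors wish to highlight.
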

\noindent
We will also employ a gradient estimate for the heat semigroup acting on distribution spaces.

\begin{lemma}\label{lem:gradientdistributions}
    For all $t>0$, $\alpha>0$, $p\in (1,\infty)$,
    \begin{equation}
        \norm{\nabla P_t}_{\HH^{-\alpha,p}\to C(\bar D)}\leq \frac{C_{\alpha,\nu,p}}{t^{(\alpha+1)/2+1/p}}.
    \end{equation}
\end{lemma}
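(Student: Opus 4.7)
The plan is to reduce this distributional gradient estimate to the known gradient bound \eqref{eq:gradientestimateOP} on $L^p$ together with the contractivity estimate \eqref{eq:contractivity} for fractional powers of $I+A_p$, using the semigroup property $P_t=P_{t/2}\circ P_{t/2}$ to split the smoothing into two halves.

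First I would unpack the norm on $\HH^{-\alpha,p}$. Recalling that $\HH^{\alpha,p}=\D((I+A_p)^{\alpha/2})$ for all $\alpha\in\R$ and that $(I+A_p)^{\alpha/2}$ is (by construction) an isometric isomorphism from $\HH^{-\alpha,p}$ onto $L^p$, any $u\in\HH^{-\alpha,p}$ can be written as $u=(I+A_p)^{\alpha/2}v$ with $v\in L^p$ and $\norm{v}_{L^p}=\norm{u}_{\HH^{-\alpha,p}}$. Using the commutation of $P_t$ and fractional powers on $\HH^{\alpha,p}$ from \autoref{prop:fractionalpowers}, I would then write
\begin{equation*}
\nabla P_t u = \nabla P_{t/2}\bigl(P_{t/2}u\bigr) = \nabla P_{t/2}\bigl((I+A_p)^{\alpha/2}P_{t/2}v\bigr).
\end{equation*}

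Next, I would estimate the two halves separately. For the inner half, by the contractivity bound \eqref{eq:contractivity} applied with exponent $\alpha/2$ and time $t/2$,
\begin{equation*}
\norm{P_{t/2}u}_{L^p} = \norm{(I+A_p)^{\alpha/2}P_{t/2}v}_{L^p} \leq \frac{C_{\alpha,\nu,p}}{t^{\alpha/2}}\norm{v}_{L^p} = \frac{C_{\alpha,\nu,p}}{t^{\alpha/2}}\norm{u}_{\HH^{-\alpha,p}}.
\end{equation*}
For the outer half, the gradient-in-operator-norm bound \eqref{eq:gradientestimateOP} gives
\begin{equation*}
\norm{\nabla P_{t/2}(P_{t/2}u)}_{C(\bar D)} \leq \frac{C_p}{(1\wedge(t/2))^{1/2+1/p}}\norm{P_{t/2}u}_{L^p}.
\end{equation*}
Multiplying the two estimates and absorbing the $1\wedge(t/2)$ factor into the constant (at the price of an implicit $T$-dependence, consistent with the conventions of \eqref{eq:contractivity} and \eqref{eq:ultracontractivity}) yields the claimed exponent $(\alpha+1)/2+1/p$.

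I do not expect a serious obstacle: the only mildly delicate point is the identification $\HH^{-\alpha,p}\cong L^p$ via $(I+A_p)^{\alpha/2}$ that underlies step one, and the use of the commutation statement of \autoref{prop:fractionalpowers} in the regime $\alpha<0$, both of which are covered by the fractional power calculus. The choice of splitting $P_t=P_{t/2}\circ P_{t/2}$ is essentially forced: one factor must move the distribution back to $L^p$ with a $t^{-\alpha/2}$ cost, and the remaining factor must provide the gradient-plus-$L^p$-to-$C(\bar D)$ embedding cost $t^{-1/2-1/p}$; their sum is exactly the target exponent.
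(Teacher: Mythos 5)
Your proof is correct and follows essentially the same route as the paper: factor the semigroup, use the contractivity bound \eqref{eq:contractivity} to send $\HH^{-\alpha,p}$ into $L^p$ at cost $t^{-\alpha/2}$, then apply the gradient estimate \eqref{eq:gradientestimateOP} for the $L^p\to C(\bar D)$ step. The only cosmetic difference is that you split at $t/2$ while the paper splits at a generic intermediate time $s$ and then optimizes over $s$, which yields the same exponent up to constants.
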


\begin{proof}
    Observe first that, by definition of $\HH^{-\alpha,p}$ and density of $\HH^{\alpha,p}$ in $L^p$, it holds for all $s>0$
    \begin{multline*}
        \norm{P_s}_{\HH^{-\alpha,p}\to L^p}
        =\sup_{f\in L^p} \frac{\norm{P_s (I+A_p)^{\alpha/2} f}_{L^p}}{\norm{f}_{L^p}}\\
        =\sup_{f\in \HH^{\alpha,p}} \frac{\norm{P_s (I+A_p)^{\alpha/2} f}_{L^p}}{\norm{f}_{L^p}}
        \leq \norm{(I+A_p)^{\alpha/2} P_s}_{L^p\to L^p} \leq \frac{C_{\alpha,\nu,p}}{s^{\alpha/2}},
    \end{multline*}
    the last step using \eqref{eq:contractivity}.
   For $0<s<t$ we can combine \eqref{eq:gradientestimateOP} and the estimate of above in order to obtain
   \begin{multline}
       \norm{\nabla P_t}_{\HH^{-\alpha,p}\to C(\bar D)}
       =\norm{\nabla P_{t-s} P_s  }_{\HH^{-\alpha,p}\to C(\bar D)}\\
       =\norm{\nabla P_{t-s}}_{L^p\to C(\bar D)} \norm{P_s}_{\HH^{-\alpha,p}\to L^p}
       \leq \frac{C_{\alpha,\nu,p}}{(t-s)^{1/2+1/p}s^{\alpha/2}},
    \end{multline}
    from which the thesis follows minimizing the right-hand side with respect to the parameter $s$.
\end{proof}

The following statement collects technical passages involving heat semigroups that will appear often in our computations, since they concern (duality) relations between operator norms of semigroups and uniform estimates of their kernels over the space domain $D$.

\begin{lemma}\label{lem:duality}
Let $\alpha\in \R$, $p\in (1,\infty)$, $t\ge0$, $\epsilon>0$.
For all $y\in \bar{D}$ and  $f\in\HH^{\alpha,p'}(D)\cap L^{p'}(D)$ it holds:
\begin{align}
\label{eq:heat_dual_representation}
&\int_D (I+A_p)^{\alpha/2}P_tp_\epsilon(\cdot,y)(x)f(x)dx \\ \nonumber
&\qquad= P_{t+\epsilon}(I+A_{p'})^{\alpha/2}f(y) =(I+A_{p'})^{\alpha/2}P_{t+\epsilon}f(y),\\
\label{eq:heat_dual_representation_grad}
&\int_D (I+A_p)^{\alpha/2}P_t\nabla_y p_\epsilon(\cdot,y)(x)f(x)dx \\
\nonumber
&\qquad= \nabla P_{t+\epsilon}(I+A_{p'})^{\alpha/2}f(y) =\nabla (I+A_{p'})^{\alpha/2}P_{t+\epsilon}f(y).
\end{align}
As a consequence
\begin{gather}
\sup_{y\in \bar{D}}\norm{P_tp_\epsilon(\cdot,y)}_{\HH^{\alpha,p}(D)} = \norm{P_{t+\epsilon}(I+A_{p'})^{\alpha/2}}_{L^{p'}(D)\to C(\bar{D})},\label{eq:heat_norm_sup}\\
\sup_{y\in \bar{D}}\norm{P_t\nabla_y p_\epsilon(\cdot,y)}_{\HH^{\alpha,p}(D)} = \norm{\nabla P_{t+\epsilon}(I+A_{p'})^{\alpha/2}}_{L^{p'}(D)\to C(\bar{D})}.\label{eq:heat_norm_grad_sup}
\end{gather}
\end{lemma}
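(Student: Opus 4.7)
The plan is to prove the two representation identities \eqref{eq:heat_dual_representation} and \eqref{eq:heat_dual_representation_grad} by transferring the operators $(I+A_p)^{\alpha/2}$ and $P_t$ from the kernel side of the pairing onto the test function $f$ via the adjoint relation $A_p^*=A_{p'}$ of \autoref{prop:fractionalpowers} and the self-adjointness of the Neumann heat semigroup; the norm identities \eqref{eq:heat_norm_sup} and \eqref{eq:heat_norm_grad_sup} will then be read off by combining the definition $\norm{u}_{\HH^{\alpha,p}}=\norm{(I+A_p)^{\alpha/2}u}_{L^p}$ with $L^p$--$L^{p'}$ duality and an interchange of suprema.

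Concretely, for \eqref{eq:heat_dual_representation} I would set
\[
 I(y):=\int_D (I+A_p)^{\alpha/2}P_tp_\epsilon(\cdot,y)(x)\,f(x)\,dx,
\]
observe that $p_\epsilon(\cdot,y)\in C^\infty(\bar D)$ (hence in every $\HH^{\beta,p}$ by the contractivity item of \autoref{prop:fractionalpowers}), and carry $(I+A_p)^{\alpha/2}$ across the pairing to land on $f$ using $A_p^*=A_{p'}$. Next, the self-adjointness of $P_t$ on $L^2$, which extends to $L^p$--$L^{p'}$ duality on smooth functions, moves $P_t$ onto the $f$-side; together with the symmetry $p_\epsilon(x,y)=p_\epsilon(y,x)$ of the Neumann heat kernel, the remaining integration against $p_\epsilon(\cdot,y)$ is precisely the action of $P_\epsilon$ evaluated at $y$. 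After applying the semigroup property $P_\epsilon P_t=P_{t+\epsilon}$ this gives $I(y)=P_{t+\epsilon}(I+A_{p'})^{\alpha/2}f(y)$, and the second equality in \eqref{eq:heat_dual_representation} is then the commutation statement $(I+A_{p'})^{\alpha/2}P_{t+\epsilon}=P_{t+\epsilon}(I+A_{p'})^{\alpha/2}$ on $\HH^{\alpha,p'}$. For \eqref{eq:heat_dual_representation_grad}, since $\nabla_y$ acts only on the external variable $y$, I would apply $\nabla_y$ to both sides of the already-proved identity \eqref{eq:heat_dual_representation}; differentiation under the integral sign on the left is justified by the pointwise bound \eqref{eq:gradientestimatePW} on $\nabla p_\epsilon$ (yielding local integrability uniform in $y$) and the smoothness of $p_\epsilon$ up to the boundary, and the second equality again follows by commutation.

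The consequence \eqref{eq:heat_norm_sup} then comes from the duality chain
\[
 \norm{P_tp_\epsilon(\cdot,y)}_{\HH^{\alpha,p}}=\sup_{\norm{f}_{L^{p'}}\leq 1}\abs{I(y)}=\sup_{\norm{f}_{L^{p'}}\leq 1}\abs{P_{t+\epsilon}(I+A_{p'})^{\alpha/2}f(y)},
\]
after which taking $\sup_{y\in\bar D}$ and swapping the two suprema converts the right-hand side to the operator norm $\norm{P_{t+\epsilon}(I+A_{p'})^{\alpha/2}}_{L^{p'}(D)\to C(\bar D)}$; \eqref{eq:heat_norm_grad_sup} is obtained identically from \eqref{eq:heat_dual_representation_grad}. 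The main care points, which I would handle by a standard density argument, are that the $L^p$-duality pairing in the first step should be realized by varying $f$ in a dense subclass of $L^{p'}$ where $(I+A_{p'})^{\alpha/2}f$ is meaningful (e.g.\ $C^\infty(\bar D)$ restricted to the $\HH^{\alpha,p'}$-boundary conditions specified in \autoref{prop:fractionalpowers}) and that the composition $P_{t+\epsilon}(I+A_{p'})^{\alpha/2}$ is reinterpreted, via the commutation and the smoothing of $P_{t+\epsilon}$, as a bounded operator on the whole of $L^{p'}$, so that taking the sup over unit-norm $f$ does recover the full operator norm on the right-hand sides of \eqref{eq:heat_norm_sup}--\eqref{eq:heat_norm_grad_sup}. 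I expect this last bookkeeping step about the correct functional setting (rather than any analytic estimate) to be the only delicate point.
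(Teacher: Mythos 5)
Your proposal is correct and follows essentially the same route as the paper: transfer $(I+A_p)^{\alpha/2}$ and $P_t$ onto $f$ by adjointness and symmetry of the Neumann heat kernel, recognize the remaining integral as $P_\epsilon$ evaluated at $y$, use the semigroup property and the commutation of $(I+A_{p'})^{\alpha/2}$ with $P_{t+\epsilon}$, justify the gradient identity by differentiating under the integral via the kernel bounds, and obtain the norm identities by $L^p$--$L^{p'}$ duality, density of $\HH^{\alpha,p'}$ in $L^{p'}$, and interchanging suprema. The only cosmetic difference is that the paper handles boundary points $y\in\bar D$ by approximating with interior sequences $y_n\to y$ and passing to the limit with dominated convergence, which is subsumed in your appeal to smoothness of $p_\epsilon$ up to the boundary.
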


\begin{proof}
    The first two claims follow from \eqref{eq:kernelestimatePW}. 
    For $y\in \bar{D}$ consider a sequence $y_n\in D$ such that $y_n\rightarrow y$;
    as for \eqref{eq:heat_dual_representation}, the following chain of equalities holds due to the regularity of $f$ and $p_{\epsilon}$:
    \begin{align*}
        \int_D (I+A_p)^{\alpha/2}P_tp_\epsilon(\cdot,y)(x)f(x)dx&=\int_D p_\epsilon(\cdot,y)(x)(I+A_{p'})^{\alpha/2}P_tf(x)dx\\ &= \int_D \lim_{n\rightarrow +\infty} p_\epsilon(\cdot,y_n)(x)(I+A_{p'})^{\alpha/2}P_tf(x)dx\\ & = \int_D \lim_{n\rightarrow +\infty} p_\epsilon(y_n,\cdot)(x)(I+A_{p'})^{\alpha/2}P_tf(x)dx\\ & =\lim_{n\rightarrow +\infty}\int_D  p_\epsilon(y_n,\cdot)(x)(I+A_{p'})^{\alpha/2}P_tf(x)dx\\ & = \lim_{n\rightarrow +\infty}P_{\epsilon}(I+A_{p'})^{\alpha/2}P_tf(y_n).
    \end{align*}
    This implies \eqref{eq:heat_dual_representation} thanks to the fact that $(I+A_{p'})^{\alpha/2}$ and the heat semigroup commute. The exchange between limit and integral is allowed thanks to \eqref{eq:kernelestimatePW}. 
    The proof of \eqref{eq:heat_dual_representation_grad} follows from a similar argument. Indeed,
    \begin{align*}
        &\int_D (I+A_p)^{\alpha/2}P_t\nabla_y p_\epsilon(\cdot,y)(x)f(x)dx \\
        &\qquad= \int_D \nabla_y p_\epsilon(\cdot,y)(x)(I+A_p)^{\alpha/2}P_tf(x)dx\\ 
        &\qquad =\nabla_y\int_D  p_\epsilon(\cdot,y)(x)(I+A_p)^{\alpha/2}P_tf(x)dx\\
        &\qquad =\nabla_y\int_D  \lim_{n\rightarrow +\infty}p_\epsilon(\cdot,y_n)(x)(I+A_p)^{\alpha/2}P_tf(x)dx\\ 
        &\qquad =\nabla_y\int_D  \lim_{n\rightarrow +\infty}p_\epsilon(y_n,\cdot)(x)(I+A_p)^{\alpha/2}P_tf(x)dx\\ 
        &\qquad = \nabla_{y}\lim_{n\rightarrow +\infty}P_{\epsilon}(I+A_{p'})^{\alpha/2}P_tf(y_n).
    \end{align*}
    \autoref{eq:heat_dual_representation_grad} follows from the regularity of $P_{\epsilon}(I+A_{p'})^{\alpha/2}P_t$ and the fact that $(I+A_{p'})^{\alpha/2}$ and the heat semigroup commute. After these preliminaries,  \eqref{eq:heat_norm_sup} and \eqref{eq:heat_norm_grad_sup} are easy to prove by duality. 
    Indeed, for each $y\in \bar D$,
    \begin{align*}
    \norm{P_tp_\epsilon(\cdot,y)}_{\HH^{\alpha,p}}& = \sup_{f\in \HH^{\alpha,p'}, \norm{f}_{L^{p'}}=1}\left\lvert\int_D (I+A_p)^{\alpha/2}P_t p_{\epsilon}(\cdot,y)(x)f(x) dx\right\rvert\\ & = \sup_{f\in \HH^{\alpha,p'}, \norm{f}_{L^{p'}}=1}\left\lvert (I+A_{p'})^{\alpha/2}P_{t+\epsilon}f(y)\right\rvert\\ & = \sup_{\norm{f}_{L^{p'}}=1}\left\lvert (I+A_{p'})^{\alpha/2}P_{t+\epsilon}f(y)\right\rvert.
    \end{align*}
    Considering the supremum of both sides for $y\in \bar{D}$ \eqref{eq:heat_norm_sup} follows immediately due to the fact that $(I+A_{p'})^{\alpha/2}P_{t+\epsilon}\in C(\Bar{D})$.
    \eqref{eq:heat_norm_grad_sup} is similar. Indeed
    \begin{align*}
        \norm{P_t\nabla_y p_\epsilon(\cdot,y)}_{\HH^{\alpha,p}}&=\sup_{f\in \HH^{\alpha,p'}, \norm{f}_{L^{p'}}=1}\left\lvert\int_D (I+A_p)^{\alpha/2}P_t \nabla_{y}p_{\epsilon}(\cdot,y)(x)f(x) dx\right\rvert\\ & = \sup_{f\in \HH^{\alpha,p'}, \norm{f}_{L^{p'}}=1}\left\lvert \nabla(I+A_{p'})^{\alpha/2}P_{t+\epsilon}f(y)\right\rvert\\ & = \sup_{\norm{f}_{L^{p'}}=1}\left\lvert \nabla(I+A_{p'})^{\alpha/2}P_{t+\epsilon}f(y)\right\rvert.
    \end{align*}
    \autoref{eq:heat_norm_grad_sup} then follows considering the supremum of both sides for $y\in \bar{D}.$
\end{proof}

We conclude the paragraph recalling
the regularizing property of Biot-Savart kernel,
following from the one of Green's function for Dirichlet boundaries,
for which we refer to \cite[Section 3]{grubb2016regularity}.

\begin{lemma}\label{lem:biotsavart}
    The linear operator $K[\omega]=-\nabla^\perp (-\Delta_{Dir})^{-1}\omega$,
    defined first for $\omega\in C^\infty(D)$, extends to continuous linear maps
    (still denoted by $K$)
    \begin{gather*}
        K: L^2(D) \to L^q(D;\R^2), \quad q\in [1,\infty),\\
        K: L^p(D) \to C(\bar D;\R^2), \quad p\in (2,\infty),\\
        K: H^{\alpha,p}(D)\to H^{\alpha+1,p}(D;\R^2), \quad p\in(1,\infty), \alpha\geq0.
    \end{gather*}
    Moreover, for all the above extensions $K[\omega]$ is divergence-less (in the sense of distributions),
    and its normal trace on $\de D$ (when defined) vanishes.
\end{lemma}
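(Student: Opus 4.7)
The plan is to deduce all three mapping properties from the regularity of the Dirichlet Laplacian on the smooth bounded domain $D$, and then to recover the two structural properties (divergence-free condition and vanishing normal trace) by approximation from the smooth case. The fundamental input, which I quote from the elliptic theory on smooth bounded domains (see \cite{grubb2016regularity}), is that for all $\alpha\ge 0$ and $p\in(1,\infty)$ the solution operator $(-\Delta_{Dir})^{-1}$ is a continuous linear map $H^{\alpha,p}(D)\to H^{\alpha+2,p}(D)$, whose image sits in the subspace of functions vanishing on $\de D$. Writing $\psi=(-\Delta_{Dir})^{-1}\omega$ we have $K[\omega]=\nabla^\perp\psi$, and applying $\nabla^\perp$ costs exactly one derivative, so
\begin{equation*}
\norm{K[\omega]}_{H^{\alpha+1,p}(D;\R^2)}\le \norm{\nabla\psi}_{H^{\alpha+1,p}(D;\R^2)}\lesssim \norm{\psi}_{H^{\alpha+2,p}(D)}\lesssim \norm{\omega}_{H^{\alpha,p}(D)},
\end{equation*}
which proves the third listed mapping.

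With that in hand, the other two mappings are consequences of the embeddings recorded in the earlier proposition. Specializing the third mapping to $\alpha=0$ we obtain $K:L^p(D)\to H^{1,p}(D;\R^2)$ for every $p\in(1,\infty)$. For $p>2$, Morrey's inequality $H^{1,p}(D)\hookrightarrow C(\bar D)$ (since $1-2/p>0$) yields the second mapping. For $p=2$, the endpoint Sobolev embedding in dimension two gives $H^{1,2}(D)\hookrightarrow L^q(D)$ for every $q\in[1,\infty)$, which yields the first mapping.

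Finally I would verify the two structural claims. For smooth $\omega$ one has $\div K[\omega]=\div\nabla^\perp\psi\equiv 0$ as an algebraic identity of second partial derivatives, and by continuity and density of $C^\infty(D)$ this extends to each of the three mapping settings where $\div K[\omega]=0$ is read in the distributional sense. For the normal trace, observe that $\psi|_{\de D}=0$ by construction implies that the tangential derivative $\hat\tau\cdot\nabla\psi$ vanishes along $\de D$; since $\hat\tau=\pm(-\hat n_2,\hat n_1)$, a direct computation gives the pointwise identity $\hat\tau\cdot\nabla\psi=\mp\hat n\cdot\nabla^\perp\psi=\mp\hat n\cdot K[\omega]$, so the normal trace of $K[\omega]$ vanishes whenever it is defined (i.e.\ whenever $\alpha+1>1/p$, the standard trace threshold). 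This is first established for smooth $\omega$ and then extends to the general case by continuity of the trace on the relevant Sobolev scale.

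The only subtle point is the normal trace statement: for $K[\omega]\in H^{1,p}$ with $p$ close to $2$ one does not have enough regularity to take pointwise traces of the components of $\nabla\psi$ individually, but this is bypassed precisely by rewriting $\hat n\cdot K[\omega]$ as a tangential derivative of the scalar function $\psi$, which lives one derivative higher and whose trace (being identically zero) is thus unambiguously defined. Everything else is a direct application of standard elliptic regularity combined with the Sobolev embeddings already recalled in \autoref{sec:analfun}.
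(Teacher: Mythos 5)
Your argument is correct and is precisely the route the paper intends: the paper gives no proof of this lemma, only a citation to \cite[Section 3]{grubb2016regularity} for the regularity of the Dirichlet Green function, and your write-up fills in exactly that — $(-\Delta_{Dir})^{-1}:H^{\alpha,p}\to H^{\alpha+2,p}$ plus the Sobolev/Morrey embeddings already recalled in \autoref{sec:analfun}, with density giving the distributional divergence-free property. The observation that $\hat n\cdot\nabla^\perp\psi$ is (up to sign) the tangential derivative of $\psi|_{\de D}\equiv 0$ is the standard and correct way to get the vanishing normal trace, so no gaps.
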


\subsection{\emph{Càdlàg} Functions and Aldous' Criterion}

For $T>0$, we denote by $\DD([0,T],S)$ the space of \emph{càdlàg} functions on $[0,T]$ taking values in a complete metric space $(S,d)$.
We will always endow $\DD([0,T],S)$ with the Skorohod metric:
we refer to \cite[Section 2.1]{Metivier1988} for a definition of the latter, 
which we will not be
using directly since we can rely on the following tightness criterion,
\cite[Theorem 3.2]{Metivier1988}.

\begin{proposition}[Aldous' Criterion]\label{prop:aldous}
    Consider a sequence of filtered probability spaces $(\Omega_n,\F_n,\PP_n)_{n\in\N}$ on each of which it is defined a 
    \emph{càdlàg} adapted process $(X^n_t)_{t\in [0,T]}$
    taking values in a complete separable metric space $(S,d)$.
    The laws of processes $(X^n_t)_{t\in [0,T]}$ 
    are tight on $\DD([0,T],S)$ if the following
    two conditions are satisfied:
    \begin{enumerate}
    \item for every $t$ in a dense subset of $[0,T]$ the laws of $X^n_t$ are tight on $S$;
    \item for all $\epsilon,\delta>0$ there exists $r_0>0$ and $n_0\in\N$ such that,
    for any sequence $(\tau_n)_{n\in\N}$ with $\tau_n$ a $\F_n$-stopping time, it holds
    \begin{equation}\label{eq:aldouscondition}
        \sup_{n\geq n_0}\sup_{r\in [0,r_0]} \PP_n\pa{d\pa{X^n_{(\tau_n+r)\wedge T},X^n_{\tau_n\wedge T}}>\delta}<\epsilon.
    \end{equation}
    \end{enumerate}
\end{proposition}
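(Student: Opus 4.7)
The plan is to invoke the classical Skorokhod characterization of tightness in $\DD([0,T],S)$, which states that a sequence of laws is tight if and only if (a) for each $t$ in a dense subset of $[0,T]$ the family of laws of $X^n_t$ is tight in $S$, and (b) the Skorokhod modulus
\[
w'(X^n,\delta)=\inf_{\{t_i\}}\max_i \sup_{s,t\in[t_{i-1},t_i)} d(X^n_s,X^n_t),
\]
with the infimum taken over partitions of $[0,T]$ of mesh at least $\delta$, satisfies $\lim_{\delta\to 0}\limsup_n \PP_n(w'(X^n,\delta)>\eta)=0$ for every $\eta>0$. Condition (a) is exactly hypothesis (1), so the work consists in deducing (b) from the stopping-time condition (2).

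To control the modulus I would fix $\eta>0$ and inductively define $\F_n$-stopping times $\tau^n_0=0$ and
\[
\tau^n_{k+1}=\inf\set{t>\tau^n_k:\, d(X^n_t,X^n_{\tau^n_k})>\eta/2}\wedge T,
\]
which are well defined thanks to the càdlàg adaptedness of $X^n$ and reach $T$ almost surely. Setting $N^n=\min\set{k:\tau^n_k=T}$, the partition $\{\tau^n_k\}_{k\leq N^n}$ has oscillation at most $\eta$ on each interval $[\tau^n_k,\tau^n_{k+1})$ by construction; hence on the event that every inter-arrival time $\tau^n_{k+1}-\tau^n_k$ exceeds $\delta$, the partition has mesh at least $\delta$ and $w'(X^n,\delta)\leq \eta$.

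It therefore suffices to show that the probability of the complementary event $\{\exists\, k<N^n:\tau^n_{k+1}-\tau^n_k<\delta\}$ tends to zero uniformly in $n$ as $\delta\to 0$. On this event, at the (random) stopping time $\tau^n_k$ the process $X^n$ travels a distance greater than $\eta/2$ within time less than $\delta$; hypothesis (2), applied with $\delta$ in place of $r$ and $\eta/2$ in place of $\delta$, controls each such occurrence. A union bound over $k\leq N^n$ then closes the argument, provided one can replace the random $N^n$ by a deterministic integer up to an exceptional event of small probability. Such an \emph{a priori} bound follows again from (2), applied at the deterministic grid $\{j r_0\}_{j\leq T/r_0}$: the process cannot oscillate more than $\eta/2$ on too many macroscopic blocks without contradicting the fixed-time displacement bound.

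The main technical obstacle is this final union-bound step. Hypothesis (2) controls displacements at a \emph{fixed} time increment $r$ after an arbitrary stopping time, whereas the modulus argument naturally produces \emph{first-passage} times $\tau^n_{k+1}$. Reconciling these — either by refining the stopping-time construction (one standard trick is to insert intermediate deterministic times $\tau^n_k+\delta$ and use them as reference points) or by passing through an oscillation estimate over intervals of length $r_0$ — constitutes the technical heart of Aldous' original argument, and is the reason why the criterion, although very powerful in applications, is non-trivial to establish.
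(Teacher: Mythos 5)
Your proposal is a plan rather than a proof, and the missing step is exactly the substantive one. Note first that the paper does not prove this proposition at all: it is quoted as a classical result with a reference to M\'etivier (Theorem 3.2 there), so the comparison here is between your sketch and the standard proof of Aldous' criterion. Your reduction is the right one in outline (pass to the Skorokhod modulus $w'$, build first-passage stopping times $\tau^n_k$ at oscillation level $\eta/2$, and show inter-arrival times are rarely shorter than $\delta$), but the claim that hypothesis (2) ``applied with $\delta$ in place of $r$'' controls the event $\{\tau^n_{k+1}-\tau^n_k<\delta\}$ does not follow as stated. Condition (2) bounds $d\pa{X^n_{(\tau+r)\wedge T},X^n_{\tau\wedge T}}$ only at a \emph{fixed deterministic lag} $r$ after a stopping time; on the event $\{\tau^n_{k+1}-\tau^n_k<\delta\}$ the process exceeds distance $\eta/2$ from $X^n_{\tau^n_k}$ at the \emph{random} time $\tau^n_{k+1}$ and may well have returned close to $X^n_{\tau^n_k}$ by the deterministic time $\tau^n_k+\delta$, so the fixed-lag displacement can be small on precisely the bad event. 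The standard repair — Aldous' averaging trick, in which one works with two stopping times $\sigma\le\tau\le\sigma+\delta$, writes $d(X_\sigma,X_\tau)\le d(X_{\tau+u},X_\tau)+d(X_{\tau+u},X_\sigma)$ with $\tau+u=\sigma+v$, integrates over the lag $u\in[0,2\delta]$ and applies (2) together with Fubini to both terms — is exactly what you defer to ``the technical heart of Aldous' original argument'' without carrying it out. The same gap infects your bound on the random number $N^n$ of oscillation times: you assert it ``follows again from (2) applied at the deterministic grid,'' but the only route to bounding $N^n$ is again through the (unproven) statement that each inter-arrival time is unlikely to be short, uniformly in $n$ and in $k$. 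So as written the argument is incomplete; either carry out the averaging lemma (equivalently, prove the two-stopping-time variant of condition (2)) and then do the union bound, or do as the paper does and cite the result.
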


\subsection{Well-Posedness of Navier-Stokes Equations with Neumann Boundary}
In order to identify the limit dynamics of our interacting particle system with solutions of \eqref{eq:neuNS},
we will need a uniqueness result and \emph{a priori} estimates.
We set the discussion of the limit PDE in the space of \emph{càdlàg} functions since convergence of the particle system will take place in that space.

\begin{definition}\label{def:weaksol}
    Let $T>0$; $\omega\in \DD([0,T];L^2(D))$,
    is a weak solution of \eqref{eq:neuNS}
    if for all $\phi\in \HH^{2,2}(D)$, for $t\in [0,T]$,
    \begin{multline*}
        \int_D \phi \omega_t dx-\int_D \phi \omega_0 dx\\
        =\nu\int_0^t \int_D \Delta \phi\cdot \omega_s dx ds
        +\int_0^t \int_D \nabla \phi\cdot K[\omega_s]\omega_s dx ds
        +\nu\int_0^t \int_{\de D} \phi g_s d\sigma(x) ds.
    \end{multline*}
\end{definition}

\begin{remark}
    The latter is a good definition: thanks to \autoref{lem:biotsavart},
    integrability in space of the nonlinear term $\nabla \phi\cdot K[\omega]\omega$ 
    follows from H\"older's inequality, since both $\nabla\phi$ and $K[\omega]$ are $L^q$ for all $q>2$.
\end{remark}

\begin{proposition}\label{prop:H-1method}
    Given $\omega_0\in L^2$ and $g\in L^2([0,T],L^2)$,
    there exists a unique weak solution of \eqref{eq:neuNS} in the sense of \autoref{def:weaksol} with $\omega(0)=\omega_0$.
    Moreover, the unique solution belongs to $C([0,T];L^2(D))\cap L^2([0,T];\HH^{1,2}(D))$.
\end{proposition}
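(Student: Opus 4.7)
The plan is to adapt to the present bounded-domain setting with Neumann forcing the classical existence-uniqueness scheme for 2d Navier--Stokes in vorticity form. For \emph{existence}, I would perform a Galerkin approximation on the $L^2(D)$ basis of eigenfunctions of $A_2$, which automatically lie in $\HH^{2,2}$; the resulting finite-dimensional ODE is locally well-posed thanks to the smoothing of $K$ (\autoref{lem:biotsavart}), and the cornerstone a priori estimate comes from testing with $\omega$ itself. The nonlinear term
$\int_D \omega\, K[\omega]\cdot\nabla\omega\, dx = \tfrac12\int_D K[\omega]\cdot\nabla(\omega^2)\,dx$
vanishes by integration by parts, using that $K[\omega]$ is divergence-free with vanishing normal trace (\autoref{lem:biotsavart}), leaving
\begin{equation*}
    \tfrac12\tfrac{d}{dt}\|\omega\|_{L^2}^2 + \nu\|\nabla\omega\|_{L^2}^2 = \nu\int_{\partial D}\omega\, g\, d\sigma.
\end{equation*}
The boundary term is controlled via the trace inequality $\|\omega\|_{L^2(\partial D)}\lesssim\|\omega\|_{L^2}^{1/2}\|\omega\|_{\HH^{1,2}}^{1/2}$ and Young, absorbing a small fraction of $\|\nabla\omega\|_{L^2}^2$ on the left. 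Gronwall then yields uniform bounds in $L^\infty([0,T];L^2)\cap L^2([0,T];\HH^{1,2})$, and Aubin--Lions compactness produces a weak solution in this class.

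For \emph{uniqueness} I would apply the $H^{-1}$ method. Let $w=\omega^1-\omega^2$: then $w\in L^\infty L^2\cap L^2\HH^{1,2}$, $w(0)=0$, the boundary forcings cancel, and the PDE gives $\partial_t w\in L^2(0,T;\HH^{-2,2})$. These regularities make $\varphi_t:=(I+A_2)^{-1}w_t\in\HH^{2,2}$ an admissible time-dependent test function (justified by density). Setting $\|w\|_*^2:=\langle w,(I+A_2)^{-1}w\rangle_{L^2}$ (equivalent to the $\HH^{-1,2}$ norm) and using $A_2(I+A_2)^{-1}=I-(I+A_2)^{-1}$, one derives
\begin{equation*}
    \tfrac12\tfrac{d}{dt}\|w\|_*^2 + \|w\|_{L^2}^2 - \|w\|_*^2 = \int_D \nabla\varphi\cdot\bigl(K[w]\omega^1 + K[\omega^2]w\bigr)\,dx,
\end{equation*}
after expanding $K[\omega^1]\omega^1-K[\omega^2]\omega^2=K[w]\omega^1+K[\omega^2]w$. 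The right-hand side is bounded by combining \autoref{cor:riesztransform} ($\|\nabla\varphi\|_{L^2}\lesssim\|w\|_*$), the Sobolev embedding $\HH^{2,2}\hookrightarrow H^{1,q}(D)$ for $q<\infty$ (giving $\|\nabla\varphi\|_{L^q}\lesssim\|w\|_{L^2}$ and, by interpolation, $\|\nabla\varphi\|_{L^4}\lesssim\|w\|_*^{1/2}\|w\|_{L^2}^{1/2}$), together with the Biot--Savart regularity $K:L^2\to L^4$ from \autoref{lem:biotsavart}. Careful bookkeeping and Young's inequality then yield
\begin{equation*}
    |\text{RHS}|\leq \tfrac12\|w\|_{L^2}^2 + C\bigl(\|\omega^1\|_{L^2}^4+\|\omega^2\|_{L^2}^4\bigr)\|w\|_*^2,
\end{equation*}
so $\|w\|_{L^2}^2$ is absorbed into the left and the prefactor of $\|w\|_*^2$ lies in $L^\infty(0,T)$; Gronwall with $w(0)=0$ forces $w\equiv 0$.

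Continuity in time in $L^2$ of the unique solution finally follows from the rigorously justified energy identity for $\omega$. The \emph{main obstacle} is the uniqueness step: the bounded-domain setting forces every admissible test function to respect the homogeneous Neumann b.c.\ (i.e.\ lie in $\HH^{2,2}$), one has to verify that $\varphi_t=(I+A_2)^{-1}w_t$ is legitimate as a time-dependent test, and the nonlinear estimate has to be sharp enough that the coefficient of $\|w\|_{L^2}^2$ on the right is strictly smaller than the dissipation coefficient on the left. Combining the Riesz-transform bound for $(I+A_2)^{-1}$, 2d Sobolev interpolation, and the Biot--Savart smoothing in the right way is the delicate technical point.
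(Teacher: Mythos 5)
Your existence step (Galerkin in the eigenbasis of $A_2$, the energy estimate with the trace inequality and absorption of the boundary term, compactness) is essentially the paper's own argument, which only sketches the energy estimate and refers to Jourdain--M\'el\'eard for details. For uniqueness, however, you take a genuinely different route. The paper passes to the mild (variation-of-constants) formulation through the Neumann heat semigroup, estimates the difference of two solutions directly in $L^2$ using ultracontractivity of $P_t$ together with \autoref{lem:biotsavart}, and closes with a singular Gr\"onwall inequality; you instead run a negative-order energy method, testing the equation for $w$ with $(I+A_2)^{-1}w$ and closing via \autoref{cor:riesztransform}, two-dimensional interpolation and Young, i.e.\ a Gr\"onwall estimate at the $\HH^{-1,2}$ level. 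Both are viable: the paper's route still uses time-dependent tests $\phi(s,x)=P_{t-s}\psi(x)$, but these do not depend on the unknown, so no chain rule for a self-referential quantity like $\norm{w_t}_{\HH^{-1,2}}^2$ has to be justified, and the same mild formulation is reused later for the particle estimates; your route is the more classical PDE energy argument and works directly in the low-regularity class, at the price of the technical points you yourself identify.

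Two caveats. First, uniqueness must be proved in the class $\DD([0,T];L^2)$ of \autoref{def:weaksol}, and your opening claim that $w=\omega^1-\omega^2\in L^2([0,T];\HH^{1,2})$ is not available a priori: that regularity is obtained only a posteriori, for the solution constructed by Galerkin, and is then transferred to \emph{the} solution via uniqueness in the larger class. If your estimates actually needed it, you would only obtain weak--strong uniqueness, which is not what the statement asserts. Fortunately they do not: everything you use requires only $w\in L^\infty([0,T];L^2)$ (automatic for c\`adl\`ag paths) and $\partial_t w\in L^\infty([0,T];\HH^{-2,2})$ read off the weak formulation, and the identity $\frac{d}{dt}\norm{w}_{\HH^{-1,2}}^2=2\langle \partial_t w,(I+A_2)^{-1}w\rangle$ can be justified with the pivot triple $L^2\subset\HH^{-1,2}\subset\HH^{-2,2}$, or by the same time-dependent-test extension the paper invokes; you should state the argument so that the $L^2\HH^{1,2}$ regularity is never used. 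Second, the interpolation $\norm{\nabla\varphi}_{L^4}\lesssim\norm{w}_{\HH^{-1,2}}^{1/2}\norm{w}_{L^2}^{1/2}$ loses an epsilon in the exponent if derived by interpolating $L^2$ with $L^q$, $q<\infty$; use instead Ladyzhenskaya's inequality $\norm{v}_{L^4}^2\lesssim\norm{v}_{L^2}\norm{v}_{H^{1,2}}$ with $v=\nabla\varphi$, which gives exactly the stated bound (the resulting change of the power of $\norm{\omega^i}_{L^2}$ in the Gr\"onwall prefactor is harmless since $\omega^i\in L^\infty([0,T];L^2)$). With these adjustments your scheme closes and proves the proposition.
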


The same statement holds for the cutoff PDE \eqref{eq:cneuNS}
for which in fact, thanks to the bounded nonlinearity, much stronger
well-posendenss results can be obtained.

\begin{proof}
    Consider two weak solutions $\omega, \tilde \omega\in \DD([0,T];L^2(D))$, let $w_t=\omega_t-\tilde \omega_t$. 
    With standard passages (we refer for instance to \cite[Section 3]{flandoli2022heat}) one can extend the weak formulation of the PDE to time-dependent tests 
    \begin{equation*}
        \phi\in C^1([0,T]; L^{2})\cap C([0,T];\HH^{2,2})
    \end{equation*}
    (this makes use of the right continuity hypothesis on $\omega$),
    then, taking $\phi_t(s,x)=P_{t-s}\psi(x),\ \psi\in \HH^{2,2}$ transforms the PDE into the variation-of-constants form
    \begin{equation*}
    \omega_t = P_t\omega_0 -\int_0^t P_{t-s}(\div(K[\omega_s]\omega_s))ds
    +\int_0^t (I-\nu\Delta)P_{t-s} N[g_s]ds.
    \end{equation*}
    A similar formula holds for $\tilde\omega_t$. Therefore $w_t$ satisfies
    \begin{equation*}
        w_t=-\int_0^t P_{t-s}(\div(K[w_s]\tilde\omega_s))ds-\int_0^t P_{t-s}(\div(K[\omega_s]w_s))ds.
    \end{equation*}
    By Sobolev embedding, for all $q>2$,
    \begin{align*}
        K[w_s],\ K[\omega_s]\in \DD([0,T];L^q(D;\R^2)),
    \end{align*}
    hence, by Holder inequality, again for all $q>2$,
    \begin{align*}
        K[w_s]\tilde\omega_s,\ K[\omega_s]w_s\in \DD\left([0,T];L^{\frac{q+2}{2q}}(D;\R^2)\right).
    \end{align*}
    We can thus control nonlinear terms by means of ultracontractivity \eqref{eq:ultracontractivity},
    H\"older inequality and \autoref{lem:biotsavart}:
    \begin{align*}
       \norm{w_t}_{L^2}&
        \lesssim_{\nu,T,q} \int_0^t \frac{1}{(t-s)^{\frac{q+2}{2q}-\frac{1}{2}}}\norm{P_{t-s}(\div(K[w_s]\tilde\omega_s))}_{L^{\frac{2q}{q+2}}}ds\\ 
        &\quad +\int_0^t \frac{1}{(t-s)^{\frac{q+2}{2q}-\frac{1}{2}}}\norm{P_{t-s}(\div(K[\omega_s]w_s))}_{L^{\frac{2q}{q+2}}}ds\\ 
        & \lesssim_q \int_0^t \frac{1}{(t-s)^{\frac{q+2}{2q}}}\left(\norm{K[w_s]\tilde\omega_s}_{L^{\frac{2q}{q+2}}}+\norm{K[\omega_s]w_s}_{L^{\frac{2q}{q+2}}}\right)ds\\ 
        & \lesssim_q \int_0^t \frac{\norm{\omega_s}_{L^2}+\norm{\tilde \omega_s}_{L^2}}{(t-s)^{\frac{q+2}{2q}}} \norm{w_s}_{L^2} ds.
    \end{align*}
The uniqueness statement now follows from Gr\"onwall's Lemma.

The last statement of the Proposition follows from existence of 
   a weak solution belonging to $C([0,T];L^2(D))\cap L^2([0,T];H^{1,2}(D))$.
   This can be proved with a standard Galerkin approximation together with
   the usual energy estimate (see the beginning of the proof of \autoref{prop:pdeapriori} below),
   we refer to \cite[Theorem 2.6]{Jourdain2004} for details.
\end{proof}

\begin{proposition}\label{prop:pdeapriori}
    Any weak solution of \eqref{eq:neuNS} in the sense of \autoref{def:weaksol} in $\mathbb{D}([0,T];L^p)$
    with $\omega_0\in L^p$, $p>2$, satisfies
    \begin{equation}\label{eq:aprioriestimate}
        \norm{K[\omega]}_{L^\infty([0,T]; L^\infty)}
        \lesssim_{T,p,\nu}\norm{\omega_0}_{L^p}+\norm{\omega_0}^2_{L^2}
        +\norm{g}_{L^2([0,T]; L^p)}+\norm{g}^2_{L^2([0,T]; L^2)}.
    \end{equation}

    As a consequence, there exists $M_0=M_0(\omega_0,g)$ such that for all
    $M\geq M_0$ the unique weak solution of \eqref{eq:neuNS} coincides with
    the one of the cutoff PDE \eqref{eq:cneuNS} with the same initial and boundary
    data.
\end{proposition}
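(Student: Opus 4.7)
The plan is to bound $\|\omega\|_{L^\infty([0,T];L^p)}$ and then invoke the Biot-Savart continuity $K:L^p\to C(\bar D)$ from \autoref{lem:biotsavart} (valid for $p>2$). First I establish the $L^2$ energy estimate: since $\DD([0,T];L^p)\subset\DD([0,T];L^2)$, \autoref{prop:H-1method} gives that the solution actually belongs to $C([0,T];L^2)\cap L^2([0,T];\HH^{1,2})$, so it may be tested against itself; the nonlinear term vanishes because $K[\omega]$ is divergence-free with vanishing normal trace (\autoref{lem:biotsavart}), and the boundary integral is absorbed via trace-interpolation and Young, yielding
\[
\|\omega\|_{L^\infty L^2}^2 + \|\omega\|_{L^2 \HH^{1,2}}^2 \lesssim \|\omega_0\|_{L^2}^2 + \|g\|_{L^2 L^2(\partial D)}^2.
\]
This supplies the quadratic terms in the target bound.

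Next, I use the variation-of-constants formula, derived as in the proof of \autoref{prop:H-1method},
\[
\omega_t = P_t\omega_0 - \int_0^t P_{t-s}\div\bigl(K[\omega_s]\omega_s\bigr) ds + \nu\int_0^t\int_{\partial D} p_{t-s}(\cdot,y)g(s,y)d\sigma(y) ds,
\]
and bound the $L^p$ norm of each summand. The first satisfies $\|P_t\omega_0\|_{L^p}\leq\|\omega_0\|_{L^p}$ by contractivity. For the boundary piece, I test against $\phi\in L^{p'}$, use self-adjointness of $P_t$, and apply Cauchy-Schwarz in time; the trace embedding $\HH^{\alpha,p'}\hookrightarrow L^{p'}(\partial D)$ for $\alpha\in(1/p',1)$ (a nonempty range since $p'<2$) combined with \eqref{eq:contractivity} gives $\|P_s\phi\|_{L^{p'}(\partial D)}\lesssim s^{-\alpha/2}\|\phi\|_{L^{p'}}$, whose square is integrable on $(0,T)$. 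This yields $\|\omega^b\|_{L^\infty L^p}\lesssim\|g\|_{L^2 L^p(\partial D)}$, the linear boundary-data term in the claim.

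The nonlinear term is the crucial one. Combining \autoref{cor:riesztransform} with \eqref{eq:contractivity}, and using $K[\omega]\cdot\hat n|_{\partial D}=0$ to integrate by parts cleanly, one obtains by duality the smoothing estimate $\|P_{t-s}\div(K[\omega_s]\omega_s)\|_{L^p}\lesssim (t-s)^{-1/2}\|K[\omega_s]\omega_s\|_{L^p}$. Then Hölder, Biot-Savart ($K:L^2\to L^{2p}$) and two-dimensional Gagliardo-Nirenberg produce
\[
\|K[\omega_s]\omega_s\|_{L^p}\lesssim\|\omega_s\|_{L^2}^{1+1/p}\|\nabla\omega_s\|_{L^2}^{(p-1)/p}+\|\omega_s\|_{L^2}^2,
\]
a quantity whose $L^{2p'}_t$-norm is controlled by $\|\omega\|_{L^\infty L^2}$ and $\|\nabla\omega\|_{L^2 L^2}$. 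Since $2p'>2$ strictly, its Hölder conjugate is $<2$, so time-Hölder absorbs the singular factor $(t-s)^{-1/2}$ against $L^{2p'}_t$; the $L^2$ energy bound then yields $\|\omega^{nl}\|_{L^\infty L^p}\lesssim\|\omega_0\|_{L^2}^2+\|g\|_{L^2 L^2(\partial D)}^2$.

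Assembling the three pieces and applying Biot-Savart gives the displayed inequality. For the cutoff statement I set $M_0$ equal to the resulting RHS: for any $M\geq M_0$ one then has $|K[\omega](t,x)|\leq M$ pointwise, so $F(K[\omega])=K[\omega]$, and since $K[\omega]$ is divergence-free, $\div(F(K[\omega])\omega)=K[\omega]\cdot\nabla\omega$, whence \eqref{eq:neuNS} and \eqref{eq:cneuNS} share the same unique weak solution. The main difficulty is the nonlinear-term estimate: a naive Cauchy-Schwarz in time would force convolving $(t-s)^{-1/2}$ against an $L^2_t$ quantity, producing a non-integrable $t^{-1}$ singularity; one must instead exploit two-dimensional Gagliardo-Nirenberg to upgrade the $L^2$ energy into $L^{2p'}_t$ integrability of $\|K[\omega]\omega\|_{L^p}$ (with $2p'>2$ strictly), which is exactly what is needed for Hölder to beat the singular kernel.
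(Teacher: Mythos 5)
Your argument is correct and follows the paper's overall strategy — the $L^2$ energy estimate, the variation-of-constants (mild) formulation, a bound on $\sup_t\norm{\omega_t}_{L^p}$, and then $K:L^p\to C(\bar D)$ — but the two delicate terms are handled with genuinely different tools. For the boundary source, the paper passes through the Neumann lift $N[g]$ of \autoref{lem:neumannop} and estimates $\int_0^t(I-\nu\Delta)P_{t-s}N[g_s]ds$ via fractional powers and \eqref{eq:contractivity}, whereas you represent the source directly as $\nu\int_0^t\int_{\de D}p_{t-s}(\cdot,y)g_s(y)d\sigma(y)ds$ and estimate by duality with the trace embedding $H^{\alpha,p'}(D)\hookrightarrow L^{p'}(\de D)$, $\alpha\in(1/p',1)$, plus \eqref{eq:contractivity}; the two are equivalent in substance (your representation is the one the paper itself uses in \autoref{lem:boundarygeneration}), and both yield the $\norm{g}_{L^2L^p(\de D)}$ term with an integrable singularity $(t-s)^{-\alpha}$ in the Cauchy--Schwarz step. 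For the nonlinearity, the paper keeps the non-divergence form $K[\omega_s]\cdot\nabla\omega_s$, places it in $L^q$ with $\tfrac{2p}{p+2}<q<2$ by H\"older and \autoref{lem:biotsavart}, and uses ultracontractivity \eqref{eq:ultracontractivity} so that the semigroup costs only $(t-s)^{1/p-1/q}$ with exponent strictly below $1/2$, leaving room for Cauchy--Schwarz against $\norm{\nabla\omega_s}_{L^2}\in L^2_t$; you instead use the divergence form, pay exactly $(t-s)^{-1/2}$ by moving the derivative onto the semigroup (\autoref{cor:riesztransform} combined with \eqref{eq:contractivity}, with no boundary contribution since the normal trace of $K[\omega]$ vanishes — equivalently one can just work with the dual pairing $\brak{K[\omega_s]\omega_s,\nabla P_{t-s}\phi}$ coming from the weak formulation), and recover the missing slack in the time variable via Gagliardo--Nirenberg, which upgrades $\norm{K[\omega_s]\omega_s}_{L^p}$ to $L^{2p'}_t$ integrability with $2p'>2$. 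Both routes spend the same a priori information ($L^\infty_tL^2$ plus $L^2_tH^{1,2}$) and produce the same quadratic right-hand side; the paper's version gains its margin on the singularity exponent, yours on the temporal integrability of the density, and your closing observation about why naive Cauchy--Schwarz fails is exactly the obstruction both devices are designed to beat. The cutoff conclusion is handled as in the paper.
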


In order to establish this \emph{a priori} estimate we first recall 
a convenient representation for boundary terms 
appearing in the weak formulation of \eqref{eq:neuNS}. 

\begin{lemma}\label{lem:neumannop}
    Let $g\in L^p(\de D)$, $p\geq 2$ and $\epsilon>0$. There exists a unique weak solution $u\in H^{1+\frac{1}{p}-\epsilon,p}(D)$ of 
    \begin{equation*}
        \begin{cases}
            (I-\nu\Delta)u(x)=0, &x\in D,\\
            \hat n\cdot \nabla u(x)=g(x), &x\in \de D,
        \end{cases}
    \end{equation*}
    that is, for all $\phi\in H^{1,p'}(D)$,
    \begin{equation*}
        \int_D (\phi u +\nu\nabla \phi \cdot \nabla u)dx=\nu\int_{\de D} \phi g d\sigma(x).
    \end{equation*}
    Moreover, the solution map $N[g]=u$ defines a continuous linear operator $N:L^p(\de D)\to  \HH^{1+1/p-\delta,p}$ for any $\delta>0$.
\end{lemma}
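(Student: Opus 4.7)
My plan is to combine Lax--Milgram existence in the energy space $H^{1,2}(D)$ with a duality argument against the Neumann Laplacian $(I+A_{p'})$ to upgrade the regularity to the full $\HH^{1+1/p-\delta,p}$ scale. For Lax--Milgram: since $p\geq 2$ and $\de D$ is compact, $g\in L^p(\de D)\subset L^2(\de D)$; the bilinear form $a(u,\phi)=\int_D (u\phi+\nu\nabla u\cdot\nabla\phi)\,dx$ is continuous and coercive on $H^{1,2}(D)$, and the linear functional $\phi\mapsto\nu\int_{\de D}\phi g\,d\sigma$ is continuous on $H^{1,2}(D)$ by the trace inequality, so Lax--Milgram produces a unique $u\in H^{1,2}(D)$, which provides existence and uniqueness.

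To promote the regularity I would restrict the weak equation to test functions $\phi\in\HH^{2,p'}(D)$, which by definition satisfy $\hat n\cdot\nabla\phi=0$ on $\de D$. The inclusion $\HH^{2,p'}\hookrightarrow H^{1,2}(D)$ (via Sobolev embedding in dimension $2$, valid for any $p'\geq 1$) makes this restriction admissible, and integration by parts in the gradient coupling eliminates the boundary term, yielding
\begin{equation*}
    \int_D u\cdot(I+A_{p'})\phi\,dx=\nu\int_{\de D}\phi g\,d\sigma,\qquad \phi\in\HH^{2,p'}(D).
\end{equation*}
Since $(I+A_{p'})\colon\HH^{2,p'}\to L^{p'}$ is an isomorphism, setting $\psi:=(I+A_{p'})\phi$ yields
\begin{equation*}
    \int_D u\psi\,dx=\nu\int_{\de D} T\bra{(I+A_{p'})^{-1}\psi}\cdot g\,d\sigma,\qquad \psi\in L^{p'},
\end{equation*}
with $T$ the boundary trace.

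Fixing $\delta>0$ and $s:=1+1/p-\delta$, H\"older gives
\begin{equation*}
    \abs{\int_D u\psi\,dx}\leq \nu\norm{g}_{L^p(\de D)}\,\norm{T(I+A_{p'})^{-1}\psi}_{L^{p'}(\de D)}.
\end{equation*}
The extrapolated operator $(I+A_{p'})^{-1}$ is an isomorphism $\HH^{-s,p'}\to\HH^{2-s,p'}$ (commutation of fractional powers); since $2-s=1/p'+\delta\in(0,1+1/p')$, \autoref{prop:fractionalpowers} identifies $\HH^{2-s,p'}$ with $H^{2-s,p'}(D)$ (no boundary condition imposed), on which the trace is bounded into $L^{p'}(\de D)$. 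Hence
\begin{equation*}
    \abs{\int_D u\psi\,dx}\leq C\,\norm{g}_{L^p(\de D)}\,\norm{\psi}_{\HH^{-s,p'}},\qquad \psi\in L^{p'}.
\end{equation*}
By density of $L^{p'}$ in $\HH^{-s,p'}$ and the duality $(\HH^{-s,p'})^{*}=\HH^{s,p}$, we conclude $u\in\HH^{1+1/p-\delta,p}(D)$ with norm controlled by $\norm{g}_{L^p(\de D)}$, proving continuity of $N$.

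The main obstacle is matching the endpoint regularity: the trace bound $T\colon\HH^{2-s,p'}\to L^{p'}(\de D)$ requires $2-s>1/p'$, i.e.\ $s<1+1/p$, while the identification $\HH^{2-s,p'}=H^{2-s,p'}(D)$ without any imposed Neumann boundary condition on the test side holds precisely in the same range by \autoref{prop:fractionalpowers}. Tracking these two thresholds simultaneously is what produces the sharp exponent $1+1/p-\delta$; any loss in either bound would degrade the final Sobolev regularity.
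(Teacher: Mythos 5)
The paper gives no argument for this lemma at all (it defers to \cite[Chapter 4]{trie1983fun}), so there is no internal proof to match; your proposal is a self-contained alternative, and as far as I can check it is correct. The route — Lax--Milgram in $H^{1,2}(D)$ (legitimate since $g\in L^p(\de D)\subset L^2(\de D)$ on the compact boundary), then testing against $\phi\in\HH^{2,p'}$ and trading $(I+A_{p'})^{-1}$ against the trace operator to get the bound $\abs{\int_D u\psi\,dx}\lesssim \norm{g}_{L^p(\de D)}\norm{(I+A_{p'})^{-s/2}\psi}_{L^{p'}}$ with $s=1+1/p-\delta$ — fits the paper's semigroup framework well, and in fact it produces along the way exactly the representation $\int_{\de D}\phi g\,d\sigma=\int_D(I+A_{p'})\phi\,N[g]\,dx$ that the paper records right after the lemma; your exponent bookkeeping ($2-s=1/p'+\delta$, trace bounded on $H^{1/p'+\delta,p'}(D)$, identification with $\HH^{1/p'+\delta,p'}$ below the threshold $1+1/p'$ from \autoref{prop:fractionalpowers}) is right. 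A few routine points should be made explicit to close the argument: (i) Green's formula for $u\in H^{1,2}(D)$ against $\phi\in\HH^{2,p'}$ needs a density justification, the boundary term disappearing because the Neumann trace of $\phi$ vanishes in $L^{p'}(\de D)$; (ii) the lemma asks for the weak identity against \emph{all} $\phi\in H^{1,p'}(D)$, a strictly larger test class than $H^{1,2}(D)$ when $p>2$, so after the regularity upgrade (which gives $u,\nabla u\in L^p$) you should extend the identity by density of $H^{1,2}$ in $H^{1,p'}$, and conversely note that uniqueness in the stated class reduces to Lax--Milgram uniqueness because $H^{1+1/p-\epsilon,p}(D)\hookrightarrow H^{1,2}(D)$ for small $\epsilon$; (iii) the final step uses the negative-order space $\HH^{-s,p'}$ (completion of $L^{p'}$ in the norm $\norm{(I+A_{p'})^{-s/2}\cdot}_{L^{p'}}$) and the duality $(\HH^{-s,p'})^*=\HH^{s,p}$ through the $L^p$--$L^{p'}$ pairing; this is the same dual-scale mechanism the paper exploits in \autoref{lem:duality}, but it deserves a sentence since the membership $u\in\HH^{s,p}$ is deduced from a bound tested only on $\psi\in L^{p'}$. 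Compared with simply quoting Triebel, your argument is longer but has the advantage of staying entirely inside the functional-analytic toolkit the paper already sets up.
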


We refer to \cite[Chapter 4]{trie1983fun} for the (standard) proof.
It follows that
\begin{equation*}
    \int_ {\de D} \phi g d\sigma(x)=\int_D (I+A_{p'})\phi N[g] dx,
    \quad \phi\in H^{2,p'}_{Neu}, \, g\in L^p(\de D).
\end{equation*}

\begin{proof}[Proof of \autoref{prop:pdeapriori}]
    For the sake of completeness we sketch the classical argument 
    for the \emph{a priori} estimate in the $L^2$ setting.
    By \autoref{prop:H-1method} we know that the weak solution
    actually belongs to $C([0,T];L^2(D))\cap L^2([0,T];H^{1,2}(D))$,
    so we can evaluate
    \begin{equation*}
    \frac12\frac{d}{dt}\norm{\omega_t}_{L^2}^2
    =-\nu\norm{\nabla \omega_t}_{L^2}^2+\int_D \nabla\omega_t\cdot K[\omega_t]\omega_t dx+\nu\int_{\partial D} \omega_t g_t d\sigma(x).
    \end{equation*}
    The second term on the right-hand side vanishes, and the third one can be controlled thanks to the
    properties of the trace of functions in $H^1(D)$:
    \begin{align*}
    \int_{\partial D} \omega_t g_t d\sigma(x)&\leq C\pa{\norm{\omega_t}_{L^2}+\norm{\nabla\omega_t}_{L^2}}\norm{g_t}_{L^2(\partial D)}\\ 
    &\leq  \frac{1}{2}\norm{\omega_t}_{L^2}^2+\frac{1}{2}\norm{\nabla\omega_t}_{L^2}^2+C\norm{g_t}_{L^2(\partial D)}^2.    
    \end{align*}
    Therefore, we obtain 
    \begin{equation*}
    \frac12\frac{d}{dt}\norm{\omega_t}_{L^2}^2+\frac\nu2\norm{\nabla \omega_t}_{L^2}^2\leq \frac{\nu}{2}\norm{\omega_t}_{L^2}^2+C\nu\int_{\partial D} \omega_t g_t d\sigma(x),
    \end{equation*}
    from which by Gr\"onwall's lemma we conclude that
    \begin{equation}\label{eq:aprioriL2}
        \norm{\omega}^2_{L^\infty([0,T],L^2)}+\norm{\nabla\omega}^2_{L^2([0,T],L^2)}
        \lesssim_{\nu,T} \norm{\omega_0}^2_{L^2}+\norm{g}^2_{L^2([0,T],L^2(\de D))}.
    \end{equation}

    Arguing as in the proof of \autoref{prop:H-1method}, extending the weak formulation of the PDE to time-dependent tests $$\phi\in C^1([0,T]; L^{p'})\cap C([0,T];\HH^{2,p'})$$
    and taking $\phi_t(s,x)=P_{t-s}\psi(x),\ \psi\in \HH^{2,p'}$, standard passages
    allow to rewrite the PDE into the variation-of-constants form
    \begin{equation}\label{eq:vNSduhamel}
    \omega_t = P_t\omega_0 -\int_0^t P_{t-s}(K[\omega_s]\cdot \nabla\omega_s)ds
    +\int_0^t (I-\nu\Delta)P_{t-s} N[g_s]ds,
    \end{equation}
    from which
    \begin{equation*}
        \norm{\omega_t}_{L^p}
        \leq \norm{\omega_0}_{L^p}+\int_0^t \norm{P_{t-s}(K[\omega_s]\cdot \nabla\omega_s)}_{L^p}ds
        +\int_0^t \norm{(I-\nu\Delta)P_{t-s} N[g_s]}_{L^p}ds.
    \end{equation*}
    We control the nonlinear term by means of ultracontractivity \eqref{eq:ultracontractivity},
    H\"older inequality and \autoref{lem:biotsavart}: for $\frac{2p}{p+2}<q<2<p$ it holds
    \begin{align*}
        \int_0^t \norm{P_{t-s}(K[\omega_s]\cdot \nabla\omega_s)}_{L^p}ds
        &\leq \int_0^t (t-s)^{1/p-1/q} \norm{(K[\omega_s]\cdot \nabla\omega_s)}_{L^q}ds\\
        &\leq \int_0^t (t-s)^{1/p-1/q} \norm{K[\omega_s]}_{L^{2q/(2-q)}} \norm{\nabla\omega_s}_{L^2}ds\\
        &\lesssim_q \norm{\omega}_{L^\infty([0,T],L^2)} \int_0^t (t-s)^{1/p-1/q} \norm{\nabla\omega_s}_{L^2} ds\\
        &\lesssim_q \norm{\omega}_{L^\infty([0,T],L^2)} \norm{\omega}_{L^2([0,T],H^{1,2}(D))},
    \end{align*}
    the last step following from H\"older inequality and the fact that
    $\int_0^t (t-s)^{2/q-2/p}ds<\infty$.
    As for the boundary term, by \autoref{prop:fractionalpowers}, \autoref{lem:neumannop} and \eqref{eq:contractivity}, for $\delta>0$,
    \begin{align*}
        &\int_0^t \norm{(I-\nu\Delta)P_{t-s} N[g_s]}_{L^p}ds\\
        &\quad = \int_0^t \lVert(I+A_p)^{\frac{1}{2}+\delta-\frac{1}{2p}}P_{t-s}(I+A_p)^{\frac{1}{2}-\delta+\frac{1}{2p}}N\left[g_s\right] \rVert_{L^p} ds \\
        &\quad \lesssim_{p,\delta} \int_0^t (t-s)^{-1/2-\delta+1/(2p)}
         \lVert(I+A_p)^{\frac{1}{2}-\delta+\frac{1}{2p}}N\left[g_s\right] \rVert_{L^p} ds \\
        &\quad \leq \left(\int_0^t (t-s)^{-1-2\delta+1/p}ds\right)^{1/2}\lVert g\rVert_{L^2(0,T;L^p)},
    \end{align*}
    where the integral in parentheses is finite if we choose $\frac{1}{2p}>\delta$.
    
    By \autoref{lem:biotsavart}, and combining the estimates above, we deduce that
    \begin{multline*}
        \norm{K[\omega]}_{L^\infty([0,T], L^\infty)}\lesssim_p 
        \sup_{t\in [0,T]}\norm{\omega_t}_{L^p}\\
        \lesssim_{T,p}\norm{\omega_0}_{L^p}+\norm{\omega_0}^2_{L^2}
        +\norm{g}_{L^2([0,T]; L^p)}+\norm{g}^2_{L^2([0,T]; L^2)},
    \end{multline*}
    from which the first statement of the Proposition follows.

    As for the second statement, it suffices to take
    \begin{equation*}
        M\geq C_{T,p,\nu}\left(\norm{\omega_0}_{L^p}+\norm{\omega_0}^2_{L^2}
        +\norm{g}_{L^2([0,T]; L^p)}+\norm{g}^2_{L^2([0,T]; L^2)}+1\right),
    \end{equation*}
    where $C_{T,p,\nu}$ is the constant implied in the previous inequality, 
    making it so that the nonlinear term of solutions of \eqref{eq:neuNS}
    and \eqref{eq:cneuNS} coincide by definition of the cutoff $F$. 
\end{proof}

\subsection{Stochastic Integrals and Burkholder-Davis-Gundy Inequalities}
Let $V$ be a separable Hilbert space, and consider a cylindrical Wiener process $W$
on $V$ with identity covariance operator; for our purposes $V$ will in fact
always be finite-dimensional.
Let $(\Omega,\F,\PP)$ be the standard filtered probability space on which $W$ is defined (and adapted), and consider a progressively measurable stochastic process
$(\psi_t)_{t\in [0,T]}$ taking values into $HS(V,L^2)$, the space of
Hilbert-Schmidt operators between $V$ and $L^2=L^2(D)$.
Let us also assume that for all $t\in [0,T]$, $\PP$-almost surely,
\begin{equation*}
    \int_0^t \norm{\psi_s}_{HS(V,L^2)}^2ds<\infty.
\end{equation*}
We refer to the classical monograph \cite{DaPrato2014} for the definition of
the stochastic integral $\int_0^t\psi_s dW_s$ (a continuous $L^2$-valued local martingale) and its basic properties.

The following collects (two versions of) the Burkholder-Davis-Gundy inequality,
on which we will rely to bound uniformly
in time terms due to the Brownian noise in the particle dynamics.

\begin{proposition}\label{prop:bdg}
    Under the assumptions of above, for any $p>0$ it holds
    \begin{equation}\label{eq:bdg}
        \expt{\sup_{t\in[0,T]}\norm{\int_0^t\psi_s dW_s}_{L^2}^p}
        \lesssim_p \expt{\pa{\int_0^T \norm{\psi_s}_{HS(V,L^2)}^2ds}^{p/2}}.
    \end{equation}
    Moreover if $(S_t)_{t\geq 0}$ is an analytic semigroup on $L^2$, for any $p>2$ it holds
    \begin{equation}\label{eq:bdgconv}
        \expt{\sup_{t\in[0,T]}\norm{\int_0^t S_{t-s}\psi_s dW_s}_{L^2}^p}
        \lesssim_p \expt{\pa{\int_0^T \norm{\psi_s}_{HS(V,L^2)}^2ds}^{p/2}}.
    \end{equation}
\end{proposition}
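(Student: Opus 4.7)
For the first inequality, observe that $M_t := \int_0^t \psi_s dW_s$ is a continuous $L^2(D)$-valued local martingale with tensor quadratic variation whose trace is $\int_0^t \|\psi_s\|_{HS(V,L^2)}^2 ds$. The bound \eqref{eq:bdg} is then the Hilbert space-valued Burkholder-Davis-Gundy inequality applied directly to $M$, as found e.g.\ in \cite{DaPrato2014}.

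For the convolution inequality \eqref{eq:bdgconv}, the basic obstruction is that the stochastic convolution $Z_t := \int_0^t S_{t-s}\psi_s dW_s$ is not a martingale, so the previous bound does not apply. The plan is to use the factorization method of Da Prato-Kwapień-Zabczyk, which naturally accounts for the restriction $p>2$. Fix $\alpha \in (1/p, 1/2)$, feasible precisely because $p>2$. Starting from the identity $\int_s^t (t-r)^{-\alpha}(r-s)^{\alpha-1} dr = \pi/\sin(\pi\alpha)$ combined with the semigroup property and a stochastic Fubini theorem, one factorizes
\begin{equation*}
    Z_t = \frac{\sin\pi\alpha}{\pi}\int_0^t (t-r)^{\alpha-1} S_{t-r} Y_r\, dr, \qquad Y_r := \int_0^r (r-s)^{-\alpha} S_{r-s}\psi_s dW_s.
\end{equation*}
Then apply H\"older's inequality in the $r$-integral with exponents $p$ and $p/(p-1)$, using that $\|S_t\|$ is uniformly bounded on $[0,T]$ and that $\alpha > 1/p$ guarantees $(\alpha-1)p/(p-1) > -1$, to obtain the pathwise estimate
\begin{equation*}
    \sup_{t\in[0,T]} \norm{Z_t}_{L^2}^p \lesssim_{\alpha,p,T} \int_0^T \norm{Y_r}_{L^2}^p\, dr.
\end{equation*}
For each fixed $r$, $Y_r$ is the terminal value of a continuous $L^2$-valued martingale, so \eqref{eq:bdg} combined with Fubini in $r$ reduces matters to controlling an expectation of a weighted convolution of $\|\psi_s\|_{HS}^2$.

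The main obstacle is the final step: converting the convolution-type bound produced by factorization into the sharp $L^2$-in-time $L^{p/2}(\Omega)$ form on the right-hand side of \eqref{eq:bdgconv}, as opposed to the weaker $L^p$-in-time norm that the pure factorization argument would yield for general analytic semigroups. This sharpening exploits the specific Hilbert-space/contraction structure available here: $S_t = e^{-tA}$ acts on the Hilbert space $L^2$ with $A$ $m$-accretive (indeed $A=\nu(I+A_2)$ is positive self-adjoint in our applications). An alternative route that directly produces the stated form is to apply It\^o's formula to $\|Z_t\|_{L^2}^2$ (justified by a Galerkin or Yosida approximation, since $Z_t$ need not lie in $D(A)$ pointwise): the resulting identity has the form $\|Z_t\|^2 + 2\int_0^t\langle AZ_s, Z_s\rangle ds = \int_0^t\|\psi_s\|_{HS}^2 ds + 2\int_0^t\langle Z_s, \psi_s dW_s\rangle$; discarding the non-negative dissipation term, applying \eqref{eq:bdg} to the real-valued martingale $\int_0^\cdot \langle Z_s, \psi_s dW_s\rangle$, and absorbing the resulting $\epsilon\, E[\sup_t\|Z_t\|^p]$ term via Young's inequality yields \eqref{eq:bdgconv}. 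This is the approach I would follow. For a self-contained reference covering both parts, one may invoke the classical maximal inequalities for stochastic convolutions with contractive analytic semigroups on Hilbert spaces in \cite{DaPrato2014}.
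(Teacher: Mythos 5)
The paper does not actually prove this proposition: it is quoted from the literature, with the proof deferred entirely to Seidler's paper \cite{Seidler2010} on maximal/exponential estimates for stochastic convolutions in 2-smooth Banach spaces. So your proposal is, by construction, a different route: it supplies an actual argument. Your treatment of \eqref{eq:bdg} is correct and standard (Hilbert-space BDG for the continuous local martingale, valid for all $p>0$), and your diagnosis of the factorization method is accurate: with $\alpha\in(1/p,1/2)$ the deterministic H\"older step forces $\alpha>1/p$, while converting the resulting convolution bound into the stated right-hand side via Young's inequality would need $\alpha<1/p$, so pure factorization only yields an $L^p$-in-time norm of $\|\psi\|_{HS}$ on the right. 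Your fallback — It\^o's formula for $\|Z_t\|_{L^2}^2$ (justified by Galerkin/Yosida approximation, which also supplies the a priori finiteness needed to absorb $\epsilon\,\EE[\sup_t\|Z_t\|^p]$), discarding the dissipation $2\int_0^t\langle AZ_s,Z_s\rangle ds$, then real-valued BDG at exponent $p/2$ and Young — is the classical maximal inequality argument and is correct as far as it goes.

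The one caveat is generality: discarding the dissipation term requires $\langle Az,z\rangle\geq 0$ (or $\geq -c\|z\|^2$), i.e.\ that $(S_t)$ be a contraction or quasi-contraction semigroup, which is strictly more than the hypothesis ``analytic semigroup on $L^2$'' in the statement; a general analytic semigroup on a Hilbert space need not be quasi-contractive, nor similar to a contractive one, so your argument does not literally cover the proposition as stated, and the factorization route you correctly discard would not rescue it either. For the purposes of this paper the gap is immaterial: the only semigroup ever fed into \eqref{eq:bdgconv} (e.g.\ in \autoref{lem:martingaleterm}) is the Neumann heat semigroup $P_t$, a self-adjoint contraction on $L^2$ with nonnegative generator, exactly the accretive setting you invoke. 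If you want the statement in the generality written, you should either add the contractivity hypothesis (which is what the cited reference effectively exploits) or appeal to \cite{Seidler2010} as the paper does.
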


The proof can be found in \cite{Seidler2010},
to which we refer for a detailed and more general discussion
on this kind of inequalities.

\section{Definition of the Model and Main Results}\label{sec:statements}

Since we are considering a finite, fixed time horizon $T>0$, for the sake of lightening notation we assume from now, without loss of generality, $T=1$.

\subsection{Generation of Particles}\label{ssec:grid}
Let us introduce a grid of mesh $1/n$ spanning $(\set{0}\times D)\cup ((0,1]\times \de D)$.
Let $\gamma:S^1\simeq (0,1]\to \de D$ be a diffeomorphism parametrizing the boundary of $D$;
for $n\geq 1$ we set
\begin{gather*}
    L^n= \set{0}\times L^n_{in} \cup \set{\frac1{2n},\frac3{2n}\dots \frac{2n-1}{2n}}\times L^n_{bd},\\
    L^n_{in}= \pa{\Z/n}^2\cap D, \quad L^n_{bd}= \set{\gamma\pa{h/n}\mid h=1,\ldots n}.
\end{gather*}
We will write 
\begin{equation}
    i\mapsto(t_i^n,\zeta_i^n)\in L^n, \quad i=1,\dots,N(n),
\end{equation}
to enumerate the points of the grid $L^n$, $N(n)$ being the cardinality of $L^n$.
From now on we simply write $N=N(n)$ implying the dependence on $n$. We now introduce a partition\footnote{To be precise, a measurable partition, since boundaries of the $Q^n_i$'s overlap.} of $(\set{0}\times D)\cup ((0,1]\times \de D)$ whose elements are centered at points $\zeta_i$:
\begin{equation}
    Q_i^n=
    \begin{cases}
    \pa{\zeta_i^n+[-\frac1{2n},\frac1{2n}]^2}\cap D, &t_i^n=0,\\
    [t_i^n-\frac1{2n},t_i^n+\frac1{2n}]\times \gamma\pa{\gamma^{-1}(\zeta_i)+[-\frac1{2n},\frac1{2n}]}, &t_i^n>0.
    \end{cases}
\end{equation}
Notice that $N(n)\simeq n^2$ and the area of $Q_i^n$ (both if $t_i^n=0$ or not) is of order $n^{-2}$.
We denote by
\begin{equation}
    A^n(t)=\set{i:0\leq t_i^n\leq t}, \quad A_0^n(t)=\set{i:0< t_i^n\leq t},
\end{equation}
the set of indices relative to particles created before time $t$.
Given $\omega_0\in \B_b(D) $, $g\in \B_b([0,1]\times \de D)$
with $g,\omega_0\geq 0$, we set
\begin{equation}
    \omega_i^n=
    \begin{cases}
    \int_{Q_i^n}\omega_0(x)dx &t_i^n=0,\\
    \int_{Q_i^n} g_s(x) dsd\sigma(x) &t_i^n>0.
    \end{cases}
\end{equation}
From our assumptions on the mesh of the grid, $\omega_0$ and $g$ it follows that 
\begin{equation*}
    \omega_i^n\lesssim \frac{1}{n^2}\quad \forall n\in\N, \ i=1,\dots N.
\end{equation*}

\subsection{Diffusion Processes and Reflecting Boundaries}\label{ssec:BMreflecting}

Let $(\Omega,\F,\PP)$ a complete, filtered probability space satisfying the standard hypothesis, on which it is defined a sequence $(B^i)_{i\in \N}$ of independent $\F$-Brownian motions.
We have the following (probabilistically strong) well-posedness result:

\begin{proposition}\label{prop:wellposSDE}
    For all $n\geq 1$, on $(\Omega,\F,\PP)$, there exists a unique continuous $\bar D^N$-valued adapted process $x^n(t)=(x_1^n(t),\dots, x_N^n(t))_{t\in [0,T]}$
    and a continuous $\R^{2\times N}$-valued adapted process $k^n(t)=(k_1^n(t),\dots, k_N^n(t))_{t\in [0,T]}$ with bounded variation trajectories
    such that: for $i=1,\dots,N$, $t\in [t_i^n,1]$,
    \begin{gather*}
        x_i^n(t)-\zeta_i^n=\int_{t_i^n}^t F\pa{\sum_{j\in A^n(s)} \omega_j^n K_n(x_i^n(s),x_j^n(s))}ds + \sqrt{2\nu}\int_{t_i^n}^t dB^i_s- k_i^n(t),\\
        k_i^n(t)=\int_{t_i^n}^t\hat n(x_i^n(s))d|k_i^n|(s), 
        \quad |k_i^n|(t)=\int_{t_i^n}^t\one_{x_i^n(s)\in \de D}d|k_i^n|(s),
    \end{gather*}
    while for $t<t_i^n$ we impose $k_i^n(t)=0$ and $x_i^n(t)=\zeta_i^n$.
\end{proposition}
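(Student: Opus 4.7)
The plan is to apply the classical theory of Skorokhod-reflected SDEs with Lipschitz coefficients in a smooth bounded domain, as developed in \cite{Stroock1971,Lions1984}, iteratively over the finite partition of $[0,1]$ induced by the particle generation times $\set{0,\frac{1}{2n},\dots,\frac{2n-1}{2n}}$. On each interval between consecutive generation times the set of active indices $A^n(t)$ is constant, and the system reduces to a standard multidimensional reflected SDE in $\bar D^{|A^n|}$ with each particle reflecting independently at $\partial D$.

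The first step is to verify that for each fixed $\epsilon=\epsilon(n)>0$ the regularized Biot--Savart kernel $K_n$ is smooth on $\bar D\times\bar D$, which makes the drift
\[
b_i(\mathbf{x})=F\Bigl(\sum_{j\in A^n}\omega_j^n K_n(x_i,x_j)\Bigr),\qquad \mathbf{x}=(x_1,\dots,x_m)\in\bar D^m,
\]
globally Lipschitz and bounded by $M$. To see this I introduce, for each $y\in\bar D$, the function $u_y(x)=\int_D G(x,z)p_\epsilon(z,y)dz$; by symmetry of the Dirichlet Green's function, $u_y$ solves the Poisson problem $-\Delta u_y=p_\epsilon(\cdot,y)$ in $D$ with zero Dirichlet data. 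Since $p_\epsilon(\cdot,y)\in C^\infty(\bar D)$ and $\partial D$ is smooth, elliptic regularity yields $u_y\in C^\infty(\bar D)$; differentiating under the integral in $y$, which is admissible because $p_\epsilon$ is smooth in both arguments, shows that $K_n(x,y)=\nabla_x^\perp u_y(x)$ is of class $C^\infty(\bar D\times\bar D)$. Combined with the $1$-Lipschitz continuity of $F$ and the deterministic boundedness of the weights $\omega_j^n$, this gives the required Lipschitz bound on $b_i$.

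Given this, I would construct the solution iteratively. On $[0,\frac{1}{2n})$ the only active indices are $\set{i:t_i^n=0}$, and the corresponding system is a reflected SDE in $\bar D^{|L^n_{in}|}$ with Lipschitz drift, constant non-degenerate diffusion, and independent normal reflection of each component at $\partial D$. Since $\partial D$ is smooth, the results of \cite{Lions1984} apply to each particle separately with the other positions absorbed into the drift, and a Picard iteration in path space, which contracts on small time windows thanks to the global Lipschitz bound on $b$, yields the unique pathwise strong solution $(x_i^n,k_i^n)_{i:t_i^n=0}$ on the interval. At each subsequent grid time $\frac{2k-1}{2n}$ one appends the new particles indexed by $\set{i:t_i^n=\frac{2k-1}{2n}}$, initialized deterministically at the boundary points $\zeta_i^n\in\partial D$, and solves the augmented reflected system on the next interval by the same argument, using the endpoint values from the previous step as initial conditions. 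Iterating $2n$ times exhausts $[0,1]$, and pathwise uniqueness on each sub-interval transfers to the full trajectory.

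The principal technical point is the smoothness of $K_n$; once that is in place everything else reduces to classical reflected diffusion theory in smooth bounded domains. No complication arises when several particles are simultaneously on $\partial D$, since the reflection of each particle is defined pathwise from its own trajectory only; the finite-variation property of $k_i^n$ then follows because its total variation $|k_i^n|$ is a non-decreasing boundary-local-time process in the sense of the Skorokhod problem.
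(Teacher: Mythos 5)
Your proposal matches the paper's argument: the paper likewise disposes of this proposition by invoking the classical well-posedness theory for reflected SDEs with Lipschitz coefficients from \cite{Lions1984,Sznitman1984} and applying it iteratively on each interval of length $1/n$ between particle generation times, exactly as you do. Your explicit verification that $K_n$ is smooth on $\bar D\times\bar D$ (hence that the cutoff drift is bounded and Lipschitz) is left implicit in the paper but is correct and welcome.
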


The proof can be straightforwardly adapted from the one given in \cite{Lions1984,Sznitman1984} for systems of SDEs with regular coefficients and reflecting boundaries: the only difference is generation of new particles at the boundary, which is taken care of applying the well-posedness result on each
time interval of length $1/n$ during which particles are not generated.
We also refer to \cite[Section 4]{Jourdain2004}
for details on a SDE system (closely related to ours) 
including boundary generation at random times.
It\^o's formula for the process $x^n(t)$ takes the following form (for which we refer again to \cite{Lions1984}):

\begin{corollary}\label{cor:itoSDE}
    If $\phi\in C^2(\bar D)$ with $\nabla\phi\cdot \hat n=0$ on $\de D$, for $i=1,\dots,N$, $t\in [t_i^n,T]$,
    \begin{multline*}
        d\phi(x_i^n(t))=F\pa{ \sum_{j\in A^n(t)}\omega_j^n K_n(x_i^n(t),x_j^n(t))}\cdot \nabla\phi(x_i^n(t))dt\\
        +\nu \Delta \phi(x_i^n(t))dt+\sqrt{2\nu}\nabla\phi(x_i^n(t))dB^i_t.
    \end{multline*}
\end{corollary}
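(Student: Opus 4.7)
The plan is to apply the standard Itô formula for continuous semimartingales with bounded variation component to the process $x_i^n(t)$, noting that on $[t_i^n,T]$ this process decomposes as
\begin{equation*}
x_i^n(t) = \zeta_i^n + V_i^n(t) + \sqrt{2\nu}B^i_{t-t_i^n} - k_i^n(t),
\end{equation*}
where $V_i^n$ is the (absolutely continuous) drift term and $k_i^n$ has bounded variation. Since $\phi\in C^2(\bar D)$, extending it to a $C^2$ function on a neighborhood of $\bar D$ (which is possible because $\bar D$ is a smooth compact manifold with boundary), the classical Itô formula yields
\begin{equation*}
d\phi(x_i^n(t)) = \nabla\phi(x_i^n(t))\cdot dx_i^n(t) + \nu\Delta\phi(x_i^n(t))\,dt,
\end{equation*}
where the quadratic variation contribution uses $d\langle x_i^n\rangle_t = 2\nu\, I\, dt$ (the reflection term $k_i^n$ being of bounded variation contributes nothing to the bracket, and is independent across different components).

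Expanding $dx_i^n$ according to the SDE from \autoref{prop:wellposSDE}, the formula becomes
\begin{align*}
d\phi(x_i^n(t)) &= F\!\pa{\sum_{j\in A^n(t)}\omega_j^n K_n(x_i^n(t),x_j^n(t))}\cdot \nabla\phi(x_i^n(t))\,dt\\
&\quad + \nu\Delta\phi(x_i^n(t))\,dt + \sqrt{2\nu}\,\nabla\phi(x_i^n(t))\,dB^i_t - \nabla\phi(x_i^n(t))\cdot dk_i^n(t).
\end{align*}
It remains to check that the reflection term vanishes. Using the explicit representation of $k_i^n$ given in \autoref{prop:wellposSDE}, namely $dk_i^n(t) = \hat n(x_i^n(t))\,d|k_i^n|(t)$ with $|k_i^n|$ supported on $\{s:x_i^n(s)\in\de D\}$, we compute
\begin{equation*}
\int_{t_i^n}^t \nabla\phi(x_i^n(s))\cdot dk_i^n(s)
= \int_{t_i^n}^t \bigl(\nabla\phi\cdot \hat n\bigr)(x_i^n(s))\,\one_{x_i^n(s)\in\de D}\,d|k_i^n|(s) = 0,
\end{equation*}
the final equality following from the Neumann hypothesis $\nabla\phi\cdot\hat n\equiv 0$ on $\de D$.

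There is no real obstacle here: the only point requiring mild care is the justification of the Itô formula for the reflected process, but this is standard (it is exactly the setting of \cite{Lions1984}), and the key algebraic cancellation is entirely driven by the boundary condition on $\phi$. The formula then follows by assembling the four contributions above.
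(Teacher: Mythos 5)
Your argument is correct and is essentially the paper's own: apply the Itô formula for reflected diffusions in the Lions--Sznitman setting of \cite{Lions1984}, note the quadratic variation gives $\nu\Delta\phi\,dt$, and observe that the reflection contribution $\pm\int(\nabla\phi\cdot\hat n)(x_i^n(s))\,d|k_i^n|(s)$ vanishes because $|k_i^n|$ only charges times when $x_i^n\in\de D$, where $\nabla\phi\cdot\hat n=0$. The paper simply cites \cite{Lions1984} and records this cancellation; your write-up fills in the same steps explicitly.
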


Notice that the hypothesis $\nabla\phi\cdot \hat n=0$ on $\de D$ makes it so that reflection terms $-k^n_i$
do not appear in the It\^o formula. In our applications, this assumption will be verified thanks to our
choice of regularizing the empirical measure with the heat kernel under Neumann bouondary conditions.

\begin{remark}\label{rmk:pointvortex}
    The classical vortex dynamics in a bounded domain
    is a system of singular ODEs of the form
    \begin{gather*}
        \dot x_i=\sum_{j\neq i} \xi_j K(x_i,x_j) + \xi_i \nabla^\perp \gamma(x_i,x_i),
        \quad x_i\in D,\, \xi_i\in \R^\ast,
        \quad i,j=1,\dots N,\\
        G=(-\Delta_{Dir})^{-1}, \quad \gamma(x,y)=G(x,y)+\frac1{2\pi}\log\abs{x-y},
    \end{gather*}
    including self-interaction terms induced from the boundary effects;
    this is necessary for the system to satisfy in a weak form
    (as in \cite{Schochet1996}) the 2-dimensional Euler equations.
    We refer to \cite[Chapter 4]{Marchioro1994} for a
    general introduction to the topic, to \cite{Durr1982,Grotto2020a,Grotto2022a,Grotto2022b}
    on the issue of well-posedness of the dynamics and vortex collisions,
    and to \cite{Lions1998,Grotto2020b,Grotto2020c} on the statistical mechanics
    point of view.
    
    Self-interactions diverge logarithmically at $\de D$, 
    and this prevents us to include them
    in our model because the Brownian part of the dynamics 
    (which we must include to model viscosity) might
    drive particles onto the boundary causing blow-up of the dynamics at finite time.
    Nevertheless, under the Mean-Field scaling of particle intensities we are considering, self-interactions should be negligible in macroscopic limit.
    Indeed (heuristically) a self-interaction term $\omega_i \nabla^\perp \gamma(x_i,x_i)$ in our model would scale as $n^{-2}$, while the nonlinear and noise terms are of order 1 as $n\to\infty$.
    Hence, self-interactions due to the boundary appear to be irrelevant for the purpose of our discussion.
\end{remark}

\subsection{Convergence to Navier-Stokes Equations}\label{ssec:mainresult}
The following is the main result of the paper.

\begin{theorem}\label{thm:main}
    Let $p>2$, $\frac{2}{p}<\alpha<1$, and $\epsilon=\epsilon(n)\gtrsim n^{-1/2}$.
    Assume that $\omega_0\in \lip(\bar D) $ and $g\in \B_b([0,1]\times \de D)$, $\omega_0,g\geq 0$, and let the related notation introduced above prevail.
    There exists $M>0$ (only depending on $\omega_0,g$) such that
    for every $\eta\in \pa{\frac{2}{p},\alpha}$,
    as $n\to \infty$,
    the kernel-smoothed empirical measure 
    $\omega^n_t=\sum_{i\in A^n(t)}\omega_i^n p_\epsilon(\cdot,x_i^n(t))$ 
    converges in probability on $\DD([0,1],\HH^{\eta,p})$ 
    to the unique weak solution (given by \autoref{prop:H-1method})
    of \eqref{eq:neuNS} with initial datum $\omega_0$ and boundary source $g$.
\end{theorem}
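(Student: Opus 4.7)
The plan is a compactness plus identification argument of the type pioneered in \cite{Flandoli2019,Flandoli2020}, adapted here to the bounded domain. Concretely, I would (i) derive an Itô-type semimartingale decomposition of $\omega^n_t$ as a process in a suitable distribution space; (ii) obtain uniform-in-$n$ bounds in $\mathcal{H}^{\alpha,p}$ together with a time-modulus estimate, yielding tightness on $\DD([0,1],\mathcal{H}^{\eta,p})$ through the compact embedding $\mathcal{H}^{\alpha,p}\hookrightarrow \mathcal{H}^{\eta,p}$ (valid since $\eta<\alpha$) and \autoref{prop:aldous}; (iii) identify any limit point as a weak solution of \eqref{eq:cneuNS}, which by \autoref{prop:pdeapriori}, for $M$ large enough, coincides with the unique weak solution of \eqref{eq:neuNS} given by \autoref{prop:H-1method}; (iv) conclude convergence in probability from uniqueness of the limit.

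For the decomposition, fix $x\in\bar D$: since $p_\epsilon(x,\cdot)\in C^\infty(\bar D)$ and satisfies Neumann boundary conditions in its second argument, \autoref{cor:itoSDE} applied to $y\mapsto p_\epsilon(x,y)$ for each particle $x_i^n$ gives, summing over $i\in A^n(t)$ and taking into account particle generation,
\begin{equation*}
\omega^n_t = I^n_t + J^n_t + \nu\int_0^t \Delta\omega^n_s\, ds + \int_0^t \mathcal{N}^n_s\, ds + M^n_t,
\end{equation*}
where $I^n_t=\sum_{i:t_i^n=0}\omega_i^n p_\epsilon(\cdot,\zeta_i^n)$ is the initial contribution, $J^n_t=\sum_{i:0<t_i^n\leq t}\omega_i^n p_\epsilon(\cdot,\zeta_i^n)$ collects boundary creations, $\mathcal{N}^n_s(x)=\sum_{i\in A^n(s)}\omega_i^n F(\cdots)\cdot \nabla_y p_\epsilon(x,x_i^n(s))$ is the nonlinearity, and $M^n_t=\sqrt{2\nu}\sum_i \omega_i^n\int_{t_i^n}^t\nabla_y p_\epsilon(\cdot,x_i^n(s))\,dB^i_s$ is the martingale. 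The identity $\nu\Delta p_\epsilon(x,\cdot)=\de_s p_\epsilon(x,\cdot)$ will then let me rewrite the Laplacian contribution via heat-semigroup duality using \autoref{lem:duality}.

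Uniform bounds in $\mathcal{H}^{\alpha,p}$ are the technical heart of the proof. The key observation is that thanks to \eqref{eq:heat_norm_sup}--\eqref{eq:heat_norm_grad_sup}, norms of $P_t p_\epsilon(\cdot,y)$ and $P_t\nabla_y p_\epsilon(\cdot,y)$ in $\mathcal{H}^{\alpha,p}$ can be controlled pointwise in $y$ by operator norms of $P_{t+\epsilon}(I+A_{p'})^{\alpha/2}$ and its gradient from $L^{p'}$ to $C(\bar D)$, which are in turn estimated by \eqref{eq:contractivity}, \eqref{eq:gradientestimateOP} and \autoref{lem:gradientdistributions}. Combined with the scaling $\omega_i^n\lesssim n^{-2}$, $\#A^n(t)\lesssim n^2$, the boundedness of $F$ by $M$, and the constraint $\epsilon\gtrsim n^{-1/2}$, these yield uniform $\mathcal{H}^{\alpha,p}$-bounds on $I^n_t$, $J^n_t$, and the time integrals of $\Delta\omega^n$ and $\mathcal{N}^n$. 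For the martingale, \autoref{prop:bdg} gives
\begin{equation*}
\mathbb{E}\sup_{t\leq 1}\norm{M^n_t}_{\mathcal{H}^{\alpha,p}}^{q}\lesssim \mathbb{E}\Bigl(\sum_i (\omega_i^n)^2\int_{t_i^n}^1\norm{\nabla_y p_\epsilon(\cdot,x_i^n(s))}_{\mathcal{H}^{\alpha,p}}^2 ds\Bigr)^{q/2},
\end{equation*}
and by the same duality argument the inner norm is uniformly bounded, while $\sum_i(\omega_i^n)^2\lesssim N\cdot n^{-4}\lesssim n^{-2}$, so this term vanishes as $n\to\infty$. For Aldous' time-modulus condition, I would apply the same bounds to $\omega^n_{\tau+r}-\omega^n_\tau$ for stopping times $\tau$, splitting the increment along the five pieces of the decomposition; the drift pieces contribute $O(r)$ in $\mathcal{H}^{\alpha,p}$, the boundary generation $J^n$ contributes $O(r)$ by the assumption $g\in\B_b$, and the martingale contributes $O(r^{1/2})$ via BDG.

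Identification of the limit would proceed by fixing $\phi\in \mathcal{H}^{2,p'}$, testing the decomposition against $\phi$, and passing to the limit: $I^n_t\to \omega_0$ in $L^p$ because $\omega_0$ is Lipschitz and the heat kernel is an approximate identity (this is where the Lipschitz hypothesis buys the uniform-in-$x$ control needed for the $t=0$ limit in $\DD$), the discrete boundary sum $J^n_t$ converges to $\nu\int_0^t\int_{\de D}p_\epsilon\cdots$ which via \autoref{lem:neumannop} and \eqref{eq:heat_dual_representation} matches the boundary term of \autoref{def:weaksol}, the nonlinear term converges thanks to the regularization $K_n\to K$ and the uniform $\mathcal{H}^{\alpha,p}$-bound (which by Sobolev embedding controls $K[\omega^n]$ in $C(\bar D)$, vindicating the cutoff $F$ by \autoref{prop:pdeapriori}), and $M^n$ vanishes. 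The Skorohod representation theorem lets me carry out these passages along an a.s.\ convergent subsequence.

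The main obstacle is managing the boundary generation mechanism: controlling $J^n_t$ in the strong Sobolev norm $\mathcal{H}^{\alpha,p}$ uniformly in $n$, and reconciling its discrete-in-time jump structure with the continuous Neumann boundary datum of the limit PDE. A secondary obstacle is the careful bookkeeping of heat-kernel estimates, since translation invariance is absent and derivatives do not commute with $P_t$: the duality identities of \autoref{lem:duality} are precisely what allow the pointwise-in-$y$ reduction to operator norms that makes the a priori estimates tractable.
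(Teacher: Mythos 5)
Your overall architecture (semimartingale decomposition, uniform bounds plus Aldous, Skorohod identification of the limit as a solution of \eqref{eq:cneuNS}, cutoff removal via \autoref{prop:pdeapriori}, and convergence in probability from uniqueness of the deterministic limit) is the same as the paper's. But there is a genuine gap in the way you propose to get tightness. You keep the weak (It\^o) form of the dynamics, with the explicit terms $\nu\int_0^t\Delta\omega^n_s\,ds$ and $\int_0^t\mathcal N^n_s\,ds$, and claim both that these are uniformly bounded in $\HH^{\alpha,p}$ and that the increments of the drift pieces are $O(r)$ in $\HH^{\alpha,p}$. Neither holds uniformly in $n$: $\Delta\omega^n_s$ in $\HH^{\alpha,p}$ would require a uniform bound on $\norm{\omega^n_s}_{\HH^{\alpha+2,p}}$, which blows up as $\epsilon\to0$; the nonlinearity involves $\nabla_y p_\epsilon$, whose $\HH^{\alpha,p}$ norm, uniformly in the particle position, scales like $\epsilon^{-(1+\alpha)/2-1/p'}$ (by \eqref{eq:heat_norm_grad_sup} and \autoref{lem:gradientdistributions}); and the boundary-creation increment over a window of length $r$ costs $(r+1/n)\sup_y\norm{p_\epsilon(\cdot,y)}_{\HH^{\alpha,p}}$, again divergent in $\epsilon$. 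The paper resolves this with two devices you do not have: (a) for the space bound, the Duhamel (mild) formulation, in which $P_{t-s}$ smooths $\nabla_y p_\epsilon$ and, via the gradient estimate \eqref{eq:gradientestimateBASE} and duality, the nonlinear term is bounded by $\int_0^t (t-s)^{-(1+\alpha)/2}\norm{\omega^n_s}_{\HH^{\alpha,p}}\,ds$, closed by Gr\"onwall — a self-referential step entirely absent from your sketch, where you treat the nonlinearity as directly bounded by the scaling of the weights and the cutoff; (b) for Aldous' condition, the time increments are estimated in the much weaker norm $\HH^{-2,p}$ (where $(I+A_p)^{-1}p_\epsilon(\cdot,y)$ and $(I+A_p)^{-1}\nabla_y p_\epsilon(\cdot,y)$ are controlled uniformly in $\epsilon$), and the increment in $\HH^{\eta,p}$ is then recovered by interpolation between $\HH^{-2,p}$ and $\HH^{\alpha,p}$, which is exactly where the strict inequality $\eta<\alpha$ is used. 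Without (a) and (b), your steps (ii) and the $O(r)$/$O(r^{1/2})$ claims do not go through.

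Two smaller inaccuracies in the same part: the inner norm in your BDG bound, $\norm{\nabla_y p_\epsilon(\cdot,x_i^n(s))}_{\HH^{\alpha,p}}$, is \emph{not} uniformly bounded — it diverges like a negative power of $\epsilon$, and the martingale term is controlled only because this divergence is balanced against the $n^{-2}$ scaling of the weights under the constraint $\epsilon\gtrsim n^{-1/2}$ (this is the content of \autoref{lem:martingaleterm}); moreover the martingale is merely bounded in $\HH^{\alpha,p}$, not vanishing — it vanishes only after testing against a fixed $\phi$, which is what the identification step needs. These are repairable, but as stated the quantitative balance that makes the hypothesis $\epsilon\gtrsim n^{-1/2}$ necessary is missing from your argument.
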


By Morrey's inequality, the stated convergence implies that on 
$\DD([0,1],C(\bar D))$.

\section{Uniform Bounds}\label{sec:uniformbounds}

The proof of \autoref{thm:main} essentially relies on uniformly bounding (in $n$)
the approximating process $\omega^n(t,x)$ in terms of strong norms,
allowing the application of Aldous' tightness criterion, \autoref{prop:aldous},
and to pass to the limit the dynamics obtaining \eqref{eq:neuNS}.
The way we exploit the regularizing effect of the Brownian noise driving particles,
that is the parabolic nature of the corresponding PDE dynamics, consists in formulating
the evolution problem in Duhamel form (that is the variation-of-constants form, or \emph{mild} formulation), in complete analogy with the \emph{a priori} estimates for the limiting PDE in the previous section.

\subsection{Duhamel Formulation of Empirical Measure Dynamics}
We adopt, for the remainder of the Section, the notation of \autoref{ssec:BMreflecting}.
Let thus $(x_1^n,\dots,x_N^n)$ be the $\bar D^N$-valued stochastic process on $[0,1]$ defined by \autoref{prop:wellposSDE}
as the unique strong solution of the approximating particle system, and 
\begin{equation*}
    S^n_t= \sum_{i\in A^n(t)}\omega_i^n \delta_{x_i^n(t)}.
\end{equation*}
A direct application of the It\^o formula in \autoref{cor:itoSDE} shows that,
if $\phi\in C^2(\bar D)$ with $\nabla\phi\cdot \hat n=0$ on $\de D$,
for all $t\in [0,1]$ it holds
    \begin{align}\label{eq:itoS}
	\int_D\phi dS^n_t-\int_D\phi dS^n_0
	&=\sum_{i\in A^n_0(t)}\omega_i \phi(\zeta_i) 
    +\nu \int_0^t \int_D \Delta\phi dS^n_s ds\\ \nonumber
    &\quad +\int_0^t\int_D F\pa{\int_D K_n(x,y)dS^n_s(y)}\nabla\phi(x)dS^n_s(x)ds\\ \nonumber
	&\quad +\sqrt{2\nu} \int_{0}^t \sum_{i\in A^n(s)}\omega_i \nabla\phi(x_i^n(s))dB^i_s.
    \end{align}
Let us now considered the kernel-smoothed empirical measure
\begin{equation*}
    \omega^n_t(x)=P_\epsilon S^n_t= 
    \sum_{i\in A^n(t)}\omega_i^n p_\epsilon(x,x_i^n(t)),\quad x\in \bar D,
\end{equation*}
which, in sight of \eqref{eq:itoS}, must satisfy for all $t\in [0,1]$ and $x\in \bar D$
\begin{align} \label{eq:itoOmega}
        \omega^n_t(x)-\omega^n_0(x) 
            &= \sum_{i\in A^n_0(t)}\omega_i^n p_\epsilon(x,\zeta_i^n)
            +\nu \int_0^t \Delta \omega^n_s(x)ds\\ \nonumber
			&\quad +\int_0^t \int_D F\pa{\int_D K_n(y,z)dS^n_s(z)}\nabla_y p_\epsilon(x,y)dS^n_s(y)ds\\ \nonumber
			&\quad  +\sqrt{2\nu} \int_{0}^t \sum_{i\in A^n(s)}\omega_i^n \nabla_y p_\epsilon(x,x_i^n(s))dB^i_s.
\end{align}
As in the proof of \autoref{prop:pdeapriori}, we can derive from the latter formulation of the dynamics the variation-of-constants form with standard passages.
For all $t\in [0,1]$ and $x\in \bar D$ it holds:
\begin{align} \label{eq:duhamelformula}
        \omega^n_t(x)
            &=P_t\omega^n_0(x) +\sum_{i\in A^n_0(t)}\omega_i^n P_{t-t_i^n} p_\epsilon(x,\zeta_i^n)\\ \nonumber
			&\quad +\int_0^t P_{t-s} \int_D F\pa{\int_D K_n(y,z)dS^n_s(z)}\nabla_y p_\epsilon(x,y)dS^n_s(y)ds\\ \nonumber
			&\quad +\sqrt{2\nu} \int_{0}^t \sum_{i\in A^n(s)}\omega_i^n P_{t-s} \nabla_y p_\epsilon(x,x_i^n(s))dB^i_s.   
\end{align}

\subsection{Preliminary Estimates on Particle Creation and Diffusion}

We begin by estimating separately the terms due to particle generation at $t=0$ and at the boundary at later $t>0$.

\begin{lemma}\label{lem:initialdatum}
    Let $\epsilon=\epsilon(n)\gtrsim n^{-2/(3+\alpha)}\gtrsim n^{-1/2}$;
    for all $p>2$, $\frac{2}{p}<\alpha<1$ it holds
    \begin{equation*}
        \norm{\sum_{i: t_i^n=0}\omega^i_n p_{\epsilon}(\cdot,\zeta^n_i)}_{\HH^{\alpha,p}}\leq C_{\alpha,p}.
    \end{equation*}
\end{lemma}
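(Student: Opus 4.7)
The strategy is to compare the discrete sum to the smoothed initial datum $P_\epsilon\omega_0$, whose $\HH^{\alpha,p}$-norm is controlled by the Lipschitz regularity of $\omega_0$, and to estimate the discretization error by duality, exploiting the zero-mass cancellation within each cell $Q_i^n$. Setting $\nu^n:=\sum_{i:\,t_i^n=0}\omega_i^n\delta_{\zeta_i^n}$ and $\mu^n:=\nu^n-\omega_0\,dx$, write
\begin{equation*}
\sum_{i:\,t_i^n=0}\omega_i^n\, p_\epsilon(\cdot,\zeta_i^n) \;=\; P_\epsilon\nu^n \;=\; P_\epsilon\omega_0 \,+\, P_\epsilon\mu^n.
\end{equation*}
For the first summand, since $\alpha<1<1+1/p$, \autoref{prop:fractionalpowers} gives $\HH^{\alpha,p}=H^{\alpha,p}(D)$, and $\omega_0\in\lip(\bar D)\hookrightarrow H^{1,p}(D)\hookrightarrow H^{\alpha,p}(D)$. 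Because $P_\epsilon$ commutes with $(I+A_p)^{\alpha/2}$ on $\HH^{\alpha,p}$ and is a contraction on $L^p$, one obtains $\norm{P_\epsilon\omega_0}_{\HH^{\alpha,p}}\le \norm{\omega_0}_{\HH^{\alpha,p}}\le C_{\alpha,p}\norm{\omega_0}_{\lip(\bar D)}$, uniformly in $n$ and $\epsilon$.

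For the second summand I decompose $\mu^n=\sum_i\mu^n_i$ with $\mu^n_i:=\omega_i^n\delta_{\zeta_i^n}-\omega_0\mathbf{1}_{Q_i^n}dx$. By construction $\int d\mu^n_i=0$, $\mathrm{supp}\,\mu^n_i\subset\overline{Q_i^n}$, $\mathrm{diam}\,Q_i^n\lesssim n^{-1}$ and $|Q_i^n|\lesssim n^{-2}$. Dualizing,
\begin{equation*}
\norm{P_\epsilon\mu^n}_{\HH^{\alpha,p}} \;=\; \sup_{\norm{\phi}_{L^{p'}}=1}\,\abs{\brak{\mu^n,\psi}},\qquad \psi:=(I+A_{p'})^{\alpha/2}P_\epsilon\phi.
\end{equation*}
The zero-mass property permits subtracting $\psi(\zeta_i^n)$ inside each local pairing, so the mean value theorem gives
\begin{equation*}
\abs{\brak{\mu^n_i,\psi}} \;=\; \abs{\int_{Q_i^n}\omega_0(y)\bigl[\psi(\zeta_i^n)-\psi(y)\bigr]dy} \;\lesssim\; n^{-3}\,\norm{\omega_0}_{L^\infty}\,\norm{\nabla\psi}_{L^\infty},
\end{equation*}
and summing over the $\lesssim n^2$ cells yields $\abs{\brak{\mu^n,\psi}}\lesssim n^{-1}\norm{\nabla\psi}_{L^\infty}$.

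It remains to estimate $\norm{\nabla\psi}_{L^\infty}$. Using that $\nabla\psi=\nabla P_\epsilon\bigl((I+A_{p'})^{\alpha/2}\phi\bigr)$ with $(I+A_{p'})^{\alpha/2}\phi\in\HH^{-\alpha,p'}$ of norm $\norm{\phi}_{L^{p'}}$, \autoref{lem:gradientdistributions} gives
\begin{equation*}
\norm{\nabla\psi}_{L^\infty}\;\le\; C\,\epsilon^{-(\alpha+1)/2-1/p'}\norm{\phi}_{L^{p'}},
\end{equation*}
whence $\norm{P_\epsilon\mu^n}_{\HH^{\alpha,p}}\lesssim n^{-1}\epsilon^{-(\alpha+1)/2-1/p'}$. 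The hypothesis $\epsilon\gtrsim n^{-2/(3+\alpha)}$ forces the resulting exponent of $n$ to be $\le 0$ (in fact $<0$ for $p<\infty$, since the exponent computes to $-(2/p)/(3+\alpha)$), so the second contribution is uniformly bounded. Combined with the first term, this yields the claim. The main conceptual difficulty is that the naive termwise bound $\sum_i\omega_i^n\norm{p_\epsilon(\cdot,\zeta_i^n)}_{\HH^{\alpha,p}}$ is divergent as $\epsilon\to 0$ for any admissible rate $\epsilon=\epsilon(n)$: only the first-order Riemann-sum cancellation of $\mu^n$ against $\omega_0\,dx$ produces the $n^{-1}$ saving that balances the semigroup regularity loss $\epsilon^{-(\alpha+1)/2-1/p'}$.
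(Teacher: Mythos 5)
Your proposal is correct and follows essentially the same route as the paper: the same decomposition into $P_\epsilon\omega_0$ plus a cell-wise discretization error with first-order (zero-mass) cancellation of size $n^{-1}$ per unit mass, controlled by the gradient bound of \autoref{lem:gradientdistributions}, and the same exponent bookkeeping under $\epsilon\gtrsim n^{-2/(3+\alpha)}$. The only cosmetic difference is that you run the duality against $\phi\in L^{p'}$ directly (with $\psi=(I+A_{p'})^{\alpha/2}P_\epsilon\phi$), whereas the paper first bounds the error by $\sup_{y}\norm{\nabla_y p_\epsilon(\cdot,y)}_{\HH^{\alpha,p}}$ and then invokes \eqref{eq:heat_norm_grad_sup} of \autoref{lem:duality} — which is the same duality computation inlined (and, like the paper, your mean-value step along the segment from $\zeta_i^n$ to $y$ implicitly uses convexity of $\bar D$).
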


The idea of the proof is that, for $n$ large and $\epsilon$ small,
\begin{align*}
\sum_{i: t_i^n=0}\omega_i^n p_\epsilon(x,\zeta_i^n) \approx \int_{D} p_\epsilon(x,y) \omega_0(y) dy,
\end{align*}
and the right-hand side is bounded if, say, $\omega_0$ is Lipschitz continuous.

\begin{proof}
We have
\begin{multline*}
\norm{\sum_{i: t_i^n=0}\omega^i_n p_{\epsilon}(\cdot,\zeta^n_i)}_{\HH^{\alpha,p}}
\le \norm{\int_{D} p_\epsilon(\cdot,y) \omega_0(y) dy}_{\HH^{\alpha,p}}\\
\quad +\norm{\int_{D} p_\epsilon(\cdot,y) \omega_0(y) dy -\sum_{i: t_i^n=0}\omega^i_n (I+A_p)^{\alpha/2}p_{\epsilon}(\cdot,\zeta^n_i)}_{\HH^{\alpha,p}}
\quad =:I_{\omega_0} +R.
\end{multline*}
We bound the term $I_{\omega_0}$:
\begin{equation}\label{eq:bd_I_omega_0}
I_{\omega_0}= \norm{(I+A_p)^{\alpha/2}P_\epsilon\omega_0}_{L^p}
= \norm{P_\epsilon(I+A_p)^{\alpha/2}\omega_0}_{L^p}
\le \|\omega_0\|_{\HH^{\alpha,p}}.
\end{equation}
Concerning $R$, recall that, for $i$ such that $t_i^n=0$, $\omega_i^n= \int_{Q^n_i} \omega_0(y)dy$. Hence,
\begin{align*}
&\int_{D} p_\epsilon(\cdot,y)(x) \omega_0(y) dy -\sum_{i: t_i^n=0}\omega^i_n p_{\epsilon}(\cdot,\zeta^n_i)(x)\\
&= \sum_{i: t_i^n=0} \int_{Q^n_i} p_\epsilon(\cdot,y)(x)-p_\epsilon(\cdot,\zeta_i^n)(x) \omega_0(y) dy\\
&= \sum_{i: t_i^n=0} \int_{Q^n_i} (y-\zeta_i^n)\cdot \int_0^1 \nabla_y p_\epsilon(\cdot,\zeta_i^n+\xi(y-\zeta_i^n))(x) d\xi\, \omega_0(y)dy
\end{align*}
(note that $\zeta_i^n+\xi(y-\zeta_i^n)$ belongs to $\bar{D}$ since $\bar{D}$ is convex). Recall that, for every $i$, $|y-\xi^n_i|\lesssim 1/n$ on $Q_i^n$. We then have, by \eqref{eq:heat_norm_grad_sup},
\begin{align*}
R&\le \sum_{i: t_i^n=0} \int_{Q^n_i} (y-\zeta_i^n)\cdot \int_0^1 \norm{\nabla_yp_\epsilon(\cdot,\zeta_i^n+\xi(y-\zeta_i^n))}_{\HH^{\alpha,p}} d\xi\, \omega_0(y)dy\\
&\lesssim \frac{1}{n} \int_{D} \omega_0(y)dy \, \sup_{y\in \bar{D}} \norm{\nabla_y p_\epsilon(\cdot,y)}_{\HH^{\alpha,p}}\\
&= \frac{1}{n} \int_{D} \omega_0(y)dy \, \norm{\nabla P_\epsilon (I+A_{p'})^{\alpha/2}}_{L^{p'}\to C(\bar{D})}.
\end{align*}
Thanks to the gradient bound Lemma \ref{lem:gradientdistributions}, we get
\begin{equation}\label{eq:bd_R_omega_0}
R\lesssim_{\nu,p} \frac{1}{n} \epsilon^{-1/2-\alpha/2-1/p'} \|\omega_0\|_{L^1}
\le \frac{1}{n} \epsilon^{-3/2-\alpha/2} \|\omega_0\|_{L^1}.
\end{equation}
Putting together the bounds \eqref{eq:bd_I_omega_0} and \eqref{eq:bd_R_omega_0}, we conclude that
\begin{align*}
\norm{\sum_{i: t_i^n=0}\omega^i_n p_{\epsilon}(\cdot,\zeta^n_i)}_{\HH^{\alpha,p}}\lesssim_{\nu,p} \|\omega_0\|_{\HH^{\alpha,p}} +\frac1n \epsilon^{-3/2-\alpha/2} \|\omega_0\|_{L^1}.
\end{align*}
By the assumption $\epsilon=\epsilon(n)\gtrsim n^{-2/(3+\alpha)}$, we get the desired bound.
\end{proof}

\begin{lemma}\label{lem:boundarygeneration}
    Let $\epsilon=\epsilon(n)\gtrsim n^{-2/(2+\alpha)}\gtrsim n^{-2/3}$;
    for all $p>2$, $\frac{2}{p}<\alpha<1$ it holds
    \begin{equation*}
        \sup_{t\in [0,1]}\norm{\sum_{i\in A^n_0(t)}\omega_i^n P_{t-t_i^n} p_\epsilon(\cdot,\zeta_i^n)}_{\HH^{\alpha,p}}\leq C_{\alpha,p}.
    \end{equation*}
\end{lemma}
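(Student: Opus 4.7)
The plan is to mirror the proof of \autoref{lem:initialdatum}: approximate the discrete sum by its continuous analog on $(0,t]\times\de D$ and bound each piece separately. Introducing
\begin{equation*}
    v_t(x) := \int_0^t \int_{\de D} p_{t-s+\epsilon}(x, y) g_s(y) d\sigma(y) ds,
\end{equation*}
one writes $\sum_{i\in A^n_0(t)}\omega_i^n P_{t-t_i^n}p_\epsilon(\cdot,\zeta_i^n)=v_t+R_n(t)$, up to a negligible endpoint correction from the time block straddling $s=t$.

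For the main term, by Fubini and the semigroup property, $v_t$ is the $P_\epsilon$-regularization of the solution to the inhomogeneous Neumann heat equation with source $g$. Invoking the variation-of-constants representation derived in the proof of \autoref{prop:pdeapriori},
\begin{equation*}
    v_t = \tfrac{1}{\nu}\int_0^t (I+A_p)P_{t-s+\epsilon} N[g_s]\,ds,
\end{equation*}
so that Minkowski's inequality, the contractivity \eqref{eq:contractivity}, and the bound $\|N[g_s]\|_{\HH^{1+1/p-\delta,p}}\lesssim \|g_s\|_{L^p(\de D)}$ from \autoref{lem:neumannop} (with $\delta>0$ small enough that the resulting exponent of $(t-s+\epsilon)$ lies in $(0,1)$) yield
\begin{equation*}
    \|v_t\|_{\HH^{\alpha,p}} \lesssim \|g\|_\infty \int_0^t (t-s+\epsilon)^{-(1/2 + \alpha/2 - 1/(2p) + \delta/2)} ds,
\end{equation*}
bounded uniformly in $t\in[0,1]$.

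For the remainder, I would use $\omega_i^n=\int_{Q_i^n}g_s(y)\,ds\,d\sigma(y)$ to see that $R_n(t)$ collects block-wise differences $p_{t-t_i^n+\epsilon}(\cdot,\zeta_i^n)-p_{t-s+\epsilon}(\cdot,y)$ for $(s,y)\in Q_i^n$, and split each into a spatial part (Taylor expansion along the arc of $\de D$ parametrized by $\gamma$, so that $|y-\zeta_i^n|\lesssim 1/n$ brings out $\nabla_y p_{t-s+\epsilon}$) and a temporal part (using $\partial_\sigma p_{t-\sigma+\epsilon}=-\nu\Delta_x p_{t-\sigma+\epsilon}$, so that $|s-t_i^n|\lesssim 1/n$ brings out $A_p p_{t-\sigma+\epsilon}$). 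The kernel norms follow from \autoref{lem:duality}: \eqref{eq:heat_norm_grad_sup} handles the spatial gradient, and an analogous identity with $A_p$ in place of the identity (obtained by self-adjointness of $A_p$ and its commutation with $P_t$) handles the time piece, yielding together with \eqref{eq:contractivity} and \eqref{eq:gradientestimateOP}
\begin{equation*}
    \sup_{y\in\bar D}\|\nabla_y p_r(\cdot,y)\|_{\HH^{\alpha,p}} \lesssim r^{-(\alpha+1)/2 - 1/p'},\quad \sup_{y\in\bar D}\|A_p p_r(\cdot,y)\|_{\HH^{\alpha,p}} \lesssim r^{-1 - \alpha/2 - 1/p'},
\end{equation*}
with $r=t-s+\epsilon$. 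Since $\omega_i^n\lesssim 1/n^2$ and there are $\sim n$ boundary nodes per time slot of width $1/n$, the $i$-sum Riemann-approximates a time integral; together with the $1/n$ Taylor factor, each contribution is of the form $n^{-1}\epsilon^{1-\beta}$ with $\beta>1$, and the binding temporal piece is $O(1)$ exactly under $\epsilon\gtrsim n^{-2/(2+\alpha-2/p)}$, which is implied by the hypothesis $\epsilon\gtrsim n^{-2/(2+\alpha)}$.

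The main obstacle is the temporal Taylor step, which produces $A_p p_r$ with one extra negative power of $r$ compared to $\nabla_y p_r$, so the $s$-integral blows up as $\epsilon^{-(\alpha/2+1/p')}$; showing this is compensated by the $1/n$ Taylor factor is what fixes the scaling $\epsilon\gtrsim n^{-2/(2+\alpha)}$. A minor additional wrinkle, absent from \autoref{lem:initialdatum}: since $\bar D$ is not assumed convex, the Taylor step connecting two boundary points $\zeta_i^n,y\in\de D$ must be performed along the arc of $\de D$ between them rather than along the chord, absorbing the bounded tangential derivative of $\gamma$ into constants.
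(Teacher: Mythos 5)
Your proposal is correct and follows the same skeleton as the paper's proof: replace the discrete sum by the continuous boundary integral, bound that main term by a constant, and control the block-wise remainder by a first-order Taylor expansion in time (producing $A_p$ acting on the kernel) and in space (producing $\nabla_y$), with the duality identities of \autoref{lem:duality}, the contractivity bound \eqref{eq:contractivity} and the gradient estimates closing the scaling $n^{-1}\epsilon^{1-\beta}$ under $\epsilon\gtrsim n^{-2/(2+\alpha)}$. The genuine difference is in the main term: you represent it through the Neumann lifting $N[g]$ of \autoref{lem:neumannop} and the variation-of-constants formula (mirroring the boundary term in \autoref{prop:pdeapriori}), while the paper argues by duality, testing against $f\in L^{p'}$ and using the trace theorem $\HH^{\alpha+1/p'+\delta,p'}\to L^{p'}(\de D)$; these are dual formulations of the same estimate and indeed produce the same exponent $1/2+\alpha/2-1/(2p)+\delta/2<1$, so either route works. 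Your temporal kernel bound $\sup_y\norm{A_p p_r(\cdot,y)}_{\HH^{\alpha,p}}\lesssim r^{-1-\alpha/2-1/p'}$ is slightly sharper than the paper's $r^{-2-\alpha/2}$ (obtained via the embedding $\HH^{2,p'}\hookrightarrow C(\bar D)$), but note it does not follow from \eqref{eq:contractivity} and \eqref{eq:gradientestimateOP} as you state: you need ultracontractivity \eqref{eq:ultracontractivity} to pass from $L^{p'}$ to $C(\bar D)$, or else you simply land on the paper's coarser bound, which still suffices under the stated hypothesis. Two further small points: the blocks straddling $s=t$ are not actually negligible --- they contribute $O(n^{-1}\epsilon^{-1-\alpha/2})$, the same order as your binding temporal term --- but they are controlled by the very same kernel estimate (total weight $\lesssim 1/n$ times $\sup_y\norm{p_{\cdot+\epsilon}(\cdot,y)}_{\HH^{\alpha,p}}$), exactly as the paper does for its terms $R_{21},R_{22}$, so this is an omission of a routine step rather than a gap; and your Taylor expansion along the boundary arc parametrized by $\gamma$ is a sensible way to avoid the chord argument, for which the paper invokes convexity of $\bar D$.
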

\noindent
As for the previous lemma, the idea of the proof is that, for $n$ large and $\epsilon$ small,
\begin{align*}
\sum_{i\in A^n_0(t)}\omega_i^n P_{t-t_i^n} p_\epsilon(\cdot,\zeta_i^n)(x) \approx \int_0^t \int_{\partial D} P_{t-s} p_\epsilon(x,y) g_s(y) dyds,
\end{align*}
and the right-hand side is bounded if, say, $g$ is bounded.

\begin{proof}
We have
\begin{multline*}
\norm{\sum_{i\in A^n_0(t)}\omega_i^n P_{t-t_i^n} p_\epsilon(\cdot,\zeta_i^n)}_{\HH^{\alpha,p}}
\le \norm{\int_0^t\int_{\partial D}  P_{t-s} p_\epsilon(\cdot,y) g_s(y) dsd\sigma(y)}_{\HH^{\alpha,p}}\\
+\norm{\int_0^t\int_{\partial D}  P_{t-s} p_\epsilon(\cdot,y) g_s(y) dsd\sigma(y) -\sum_{i\in A^n_0(t)}\omega_i^n P_{t-t_i^n} p_\epsilon(\cdot,\zeta_i^n)}_{\HH^{\alpha,p}}
=:I_g +R.
\end{multline*}
\noindent
We bound the term $I_g$ first: by \eqref{eq:heat_dual_representation}, we get
\begin{align*}
&\norm{\int_0^t\int_{\partial D}  P_{t-s} p_\epsilon(\cdot,y) g_s(y) dsd\sigma(y)}_{\HH^{\alpha,p}}\\ 
&\qquad=\sup_{\norm{f}_{L^{p'}}=1} \int_0^t\int_{\partial D}  \langle (I+A_p)^{\alpha/2}P_{t-s} p_\epsilon(\cdot,y) , f\rangle g_s(y) dsd\sigma(y)\\
&\qquad = \sup_{\norm{f}_{L^{p'}}=1}\int_0^t (I+A_p')^{\alpha/2}P_{t-s+\epsilon}f(y)g_s(y)dsd\sigma(y)\\
&\qquad \leq \|g\|_{L^\infty([0,1];L^p(\partial D))} \int_0^t \norm{(I+A_{p'})^{\alpha/2}P_{t-s+\epsilon}}_{L^{p'}\to L^{p'}(\partial D)} ds.
\end{align*}
Using the trace theorem and the contractivity bound \eqref{eq:contractivity}, we get, for some fixed $\delta>0$ such that $\alpha/2+1/(2p')+\delta/2<1$,
\begin{align}\nonumber
I_g&\le \|g\|_{L^\infty([0,1];L^p(\partial D))} \int_0^t \norm{(I+A_{p'})^{\alpha/2}P_{t-s+\epsilon}}_{L^{p'}\to H^{1/p'+\delta,p'}(D)} ds \\
\nonumber
&\lesssim \|g\|_{L^\infty([0,1];L^p(\partial D))} \int_0^t \norm{(I+A_{p'})^{\alpha/2+1/(2p')+\delta/2}P_{t-s+\epsilon}}_{L^{p'}\to L^{p'}} ds\\
\nonumber
&\lesssim_{\alpha,p} \|g\|_{L^\infty([0,1];L^p(\partial D))} \int_0^t (t-s+\epsilon)^{-(\alpha/2+1/(2p')+\delta/2)}  ds \\
\label{eq:bd_Ig}
&\lesssim_{\alpha,p} \|g\|_{L^\infty([0,1];L^p(\partial D))}.
\end{align}
Concerning $R$, recall that $A^0_n(t)=\{i: 0<t^n_i \le t\}$ and, for $i\in A_0^n(t)$, $\omega_i^n= \int_{Q^n_i} g_s(y)dsd\sigma(y)$. In particular, we can split
\begin{align*}
\sum_{i\in A^n_0(t)}\omega_i^n P_{t-t_i^n} p_\epsilon(\cdot,\zeta_i^n)&= \sum_{i: t_i^n+1/(2n)< t}\int_{Q^n_i} P_{t-t_i^n} p_\epsilon(\cdot,\zeta_i^n) g_s(y) dsd\sigma(y)\\
&\quad + \sum_{i:t_i^n\le t\le t_i^n+1/(2n)}\omega_i^n P_{t-t_i^n} p_\epsilon(\cdot,\zeta_i^n)
\end{align*}
(the last sum possibly being over an empty set) and similarly
\begin{align*}
&\int_0^t\int_{\partial D} P_{t-s} p_\epsilon(\cdot,y) g_s(y) dsd\sigma(y)\\
&= \sum_{i: t_i^n+1/(2n)< t}\int_{Q^n_i} P_{t-s} p_\epsilon(\cdot,y) g_s(y) dsd\sigma(y)\\
&\quad + \sum_{i:t_i^n-1/(2n)< t\le t_i^n+1/(2n)}\int_{Q^n_i\cap [0,t]\times \partial D} P_{t-s} p_\epsilon(\cdot,y) g_s(y) dsd\sigma(y).
\end{align*}
Hence we can split $R$ as
\begin{align*}
R&\le \norm{\sum_{i: t_i^n+1/(2n)< t}\int_{Q^n_i} \pa{P_{t-s} p_\epsilon(\cdot,y) - P_{t-t_i^n} p_\epsilon(\cdot,\zeta_i^n)} g_s(y) dsd\sigma(y)}_{\HH^{\alpha,p}}\\
&\quad +\norm{\sum_{i:t_i^n-1/(2n)< t\le t_i^n+1/(2n)}\int_{Q^n_i\cap [0,t]\times \partial D} P_{t-s} p_\epsilon(\cdot,y) g_s(y) dsd\sigma(y)}_{\HH^{\alpha,p}}\\
&\quad +\norm{\sum_{i:t_i^n\le t\le t_i^n+1/(2n)}\omega_i^n P_{t-t_i^n} p_\epsilon(\cdot,\zeta_i^n)}_{\HH^{\alpha,p}}\\
&=: R_1 +R_{21} +R_{22}.
\end{align*}
To bound the term $R_1$, we start with an observation: setting
\begin{equation*}
    [t^n_i,s](\xi):=t_i^n+\xi(s-t_i^n),\quad 
    [\zeta^n_i,y](\xi):=\zeta_i^n+\xi(y-\zeta_i^n),\quad
    \xi\in [0,1]
\end{equation*}
(notice that $[\zeta^n_i,y](\xi)\in \bar D$ for all $\xi$ since $\bar D$ is convex),
 since by definition of heat semigroup it holds $\partial_t P_t = -A_p P_t$,
 we can write
\begin{align*}
&P_{t-s} p_\epsilon(\cdot,y)(x) - P_{t-t_i^n} p_\epsilon(\cdot,\zeta_i^n)(x)\\
&= (s-t_i^n) \int_0^1 -\partial_t P_{t-[t^n_i,s](\xi)} p_\epsilon(\cdot,[\zeta^n_i,y](\xi))(x) d\xi\\
&\quad +(y-\zeta_i^n)\cdot  \int_0^1 \nabla_y P_{t-[t^n_i,s](\xi)} p_\epsilon(\cdot,[\zeta^n_i,y](\xi))(x) d\xi\\
&= (s-t_i^n) \int_0^1 A_p P_{t-[t^n_i,s](\xi)} p_\epsilon(\cdot,[\zeta^n_i,y](\xi))(x) d\xi\\
&\quad +(y-\zeta_i^n)\cdot  \int_0^1 \nabla_y P_{t-[t^n_i,s](\xi)} p_\epsilon(\cdot,[\zeta^n_i,y](\xi))(x) d\xi.
\end{align*}
Applying the latter to $R_1$ we obtain:
\begin{align*}
R_1&\le \norm{\sum_{i: t_i^n+\frac1{2n}< t}\int_{Q^n_i} \int_0^1 A_p P_{t-[t^n_i,s](\xi)} p_\epsilon(\cdot,[\zeta^n_i,y](\xi)) d\xi(s-t_i^n) g_s(y) dsd\sigma(y)}_{\HH^{\alpha,p}}\\
&\quad +\norm{\sum_{i: t_i^n+\frac1{2n}< t}\int_{Q^n_i} \int_0^1 \nabla_y P_{t-[t^n_i,s](\xi)} p_\epsilon(\cdot,[\zeta^n_i,y](\xi)) d\xi (y-\zeta_i^n) g_s(y) dsd\sigma(y)}_{\HH^{\alpha,p}}\\
&=:R_{11}+R_{12}.
\end{align*}
Concerning $R_{11}$, recall that $|s-t_i^n|\le 1/(2n)$ for every $s$ in $Q_i^n$. Hence by \eqref{eq:heat_norm_sup} we have
\begin{align*}
R_{11}&\le \sum_{i: t_i^n+\frac1{2n}< t}\int_{Q^n_i} \int_0^1 \norm{A_p P_{t-[t^n_i,s](\xi)} p_\epsilon(\cdot,[\zeta^n_i,y](\xi))}_{\HH^{\alpha,p}} d\xi \, |s-t_i^n| g_s(y) dsd\sigma(y)\\
&\le \frac{1}{2n} \int_0^1 \sum_{i: t_i^n+\frac1{2n}< t}\int_{Q_i^n} \sup_{x\in \bar{D}}\norm{P_{t-[t^n_i,s](\xi)}p_\epsilon(\cdot,x)}_{\HH^{\alpha+2,p}} g_s(y) dsd\sigma(y)d\xi,\\
&= \frac{1}{2n} \int_0^1 \sum_{i: t_i^n+\frac1{2n}< t}\int_{Q_i^n} \norm{(I+A_{p'})^{1+\alpha/2}P_{t-[t^n_i,s](\xi)+\epsilon}}_{L^{p'}\to C(\bar{D})} g_s(y) dsd\sigma(y)d\xi.
\end{align*}
Since $\HH^{2,p'}$ is embedded into $C(\bar{D})$ for $p'>1$, we have
\begin{align*}
R_{11}&\lesssim_p \frac{1}{n} \int_0^1 \sum_{i: t_i^n+\frac1{2n}< t}\int_{Q_i^n} \norm{(I+A_{p'})^{2+\alpha/2}P_{t-[t^n_i,s](\xi)+\epsilon}}_{L^{p'}\to L^{p'}} g_s(y) dsd\sigma(y)d\xi.
\end{align*}
Note that, if $t_i^n+1/(2n)< t$, then $t-[t^n_i,s](\xi)\ge (t-s)/2$ for every $s$ in $Q_i^n$. Thanks to the contractivity bound \eqref{eq:contractivity}, we get
\begin{align}
R_{11}+
&\lesssim_{\alpha,p} \frac{1}{n} \int_0^1 \sum_{i: t_i^n+\frac1{2n}< t}\int_{Q_i^n} (t-[t^n_i,s](\xi)+\epsilon)^{-2-\alpha/2} g_s(y) dsd\sigma(y)d\xi \nonumber\\
&\lesssim \frac{1}{n} \int_0^t\int_{\partial D} (t-s+2\epsilon)^{-2-\alpha/2} g_s(y) d\sigma(y)ds \nonumber\\
&\lesssim \frac{1}{n} \epsilon^{-1-\alpha/2} \|g\|_{L^\infty([0,1];L^1(\partial D))}.\label{eq:bd_R11}
\end{align}
Concerning $R_{12}$, recall that $|y-\zeta_i^n|\lesssim 1/n$ for every $y$ in $Q_i^n$. Hence by \eqref{eq:heat_norm_grad_sup} we have
\begin{align*}
R_{12}&\le \sum_{i: t_i^n+\frac1{2n}< t}\int_{Q^n_i} \int_0^1 \norm{\nabla_y P_{t-[t^n_i,s](\xi)} p_\epsilon(\cdot,[\zeta^n_i,y](\xi))}_{\HH^{\alpha,p}} d\xi \, |y-\zeta_i^n| g_s(y) dsd\sigma(y)\\
&\lesssim \frac{1}{n} \int_0^1 \sum_{i: t_i^n+\frac1{2n}< t}\int_{Q_i^n} \sup_{x\in \bar{D}}\norm{\nabla P_{t-[t^n_i,s](\xi)}p_\epsilon(\cdot,x)}_{\HH^{\alpha,p}} g_s(y) dsd\sigma(y)d\xi\\
&\lesssim \frac{1}{n} \int_0^1 \sum_{i: t_i^n+\frac1{2n}< t}\int_{Q_i^n} \norm{\nabla P_{t-[t^n_i,s](\xi) +\epsilon}(I+A_{p'})^{\alpha/2}}_{L^{p'}\to C(\bar{D})} g_s(y) dsd\sigma(y)d\xi.
\end{align*}
Thanks to the gradient bound Lemma \ref{lem:gradientdistributions}, we get
\begin{align}
R_{12}&\lesssim_{\alpha,p} \frac{1}{n} \int_0^1 \sum_{i: t_i^n+\frac1{2n}< t}\int_{Q_i^n} (t-[t^n_i,s](\xi)+\epsilon)^{-1/2-\alpha/2-1/p'} g_s(y) dsd\sigma(y)d\xi \nonumber\\
&\lesssim \frac{1}{n} \int_0^t\int_{\partial D} (t-s+2\epsilon)^{-3/2-\alpha/2} g_s(y) d\sigma(y)ds \nonumber\\
&\lesssim_{\alpha,p} \frac{1}{n} \epsilon^{-1/2-\alpha/2} \|g\|_{L^\infty([0,1];L^1(\partial D))}.\label{eq:bd_R12}
\end{align}
We turn now to the bound on $R_{21}$. By \eqref{eq:heat_norm_grad_sup} we have
\begin{align*}
R_{21}&\le \sum_{i:t_i^n-\frac1{2n}< t\le t_i^n+\frac1{2n}}\int_{Q^n_i\cap [0,t]\times \partial D} \norm{P_{t-s} p_\epsilon(\cdot,y)}_{\HH^{\alpha,p}}\, g_s(y) dsd\sigma(y)\\
&\le \sum_{i:t_i^n-\frac1{2n}< t\le t_i^n+\frac1{2n}}\int_{Q^n_i\cap [0,t]\times \partial D} \sup_{x\in \bar{D}}\norm{P_{t-s+\epsilon} p_\epsilon(\cdot,x)}_{\HH^{\alpha,p}} \, g_s(y) dsd\sigma(y)\\
&= \sum_{i:t_i^n-\frac1{2n}< t\le t_i^n+\frac1{2n}}\int_{Q^n_i\cap [0,t]\times \partial D} \norm{(I+A_{p'})^{\alpha/2}P_{t-s+\epsilon}}_{L^{p'}\to C(\bar{D})} \, g_s(y) dsd\sigma(y).
\end{align*}
Notice that
\begin{align*}
\bigcup_{i:t_i^n-\frac1{2n}< t\le t_i^n+\frac1{2n}}Q_i^n \cap ([0,t]\otimes \partial D) \subseteq [t-1/n,t]\times\partial D.
\end{align*}
Therefore, by the embedding $\HH^{2,p'}\hookrightarrow C(\bar{D})$, $p'>1$, and contractivity bound \eqref{eq:contractivity},
\begin{align}
R_{21}&\lesssim_{\alpha,p} \int_{t-1/n}^t\int_{\partial D} g_s(y)d\sigma(y)ds \cdot \sup_{s\le t}\norm{(I+A_{p'})^{1+\alpha/2}P_{t-s+\epsilon}}_{L^{p'}\to L^{p'}} \nonumber\\
&\lesssim \frac{1}{n}\epsilon^{-1-\alpha/2}\|g\|_{L^\infty([0,1];L^1(\partial D)}.\label{eq:bd_R21}
\end{align}
Finally, we turn to the bound on $R_{22}$. Similarly to the case of $R_{21}$, we notice that for each $i\in A_0(t)$,
\begin{align*}
\cup_{i:t_i^n\le t\le t_i^n+\frac1{2n}}Q_i^n \subseteq \bra{t-1/n,t+\frac1{2n}}\times\partial D.
\end{align*}
Hence, proceeding as for the term $R_{21}$, we get
\begin{align}
R_{22}&\le \sum_{i:t_i^n\le t\le t_i^n+\frac1{2n}}\int_{Q^n_i}g_s(y) dsd\sigma(y) \,\norm{P_{t-t_i^n} p_\epsilon(\cdot,\zeta_i^n)}_{\HH^{\alpha,p}} \nonumber\\
&\le \int_{t-1/n}^{t+1/(2n)}\int_{\partial D} g_s(y)d\sigma(y)ds \, \sup_{s\le t}\norm{(I+A_{p'})^{1+\alpha/2}P_{t-s+\epsilon}}_{L^{p'}\to L^{p'}} \nonumber\\
&\lesssim \frac{1}{n}\epsilon^{-1-\alpha/2}\|g\|_{L^\infty([0,1];L^1(\partial D)}.\label{eq:bd_R22}
\end{align}
Putting together the bounds \eqref{eq:bd_Ig}, \eqref{eq:bd_R11}, \eqref{eq:bd_R12}, \eqref{eq:bd_R21}, \eqref{eq:bd_R22}, we conclude that
\begin{multline*}
\norm{\sum_{i\in A^n_0(t)}\omega_i^n P_{t-t_i^n} p_\epsilon(\cdot,\zeta_i^n)}_{\HH^{\alpha,p}}\\
\lesssim_{\alpha,p} \|g\|_{L^\infty([0,1];L^p(\partial D))} +\frac{1}{n}\epsilon^{-1-\alpha/2}\|g\|_{L^\infty([0,1];L^1(\partial D))}.
\end{multline*}
By the assumption $\epsilon=\epsilon(n)\gtrsim n^{-2/(2+\alpha)}$, we get the desired bound.
\end{proof}

We also estimate in a dedicated Lemma the martingale terms appearing
in the mild formulation of particle dynamics.

\begin{lemma}\label{lem:martingaleterm}
    Let $p>2$, $\frac{2}{p}<\alpha<1$ and assume that 
    $\epsilon=\epsilon(n)\gtrsim n^{-1/2}$.
    Then,
    \begin{equation}\label{eq:martingaletermbound}
        \expt{\sup_{t\in[0,1]}\norm{\int_{0}^t \sum_{i\in A^n(s)}\omega_i^n P_{t-s} \nabla_y p_\epsilon(x,x_i^n(s))dB^i_s}^q_{\HH^{\alpha,p}}}
        \leq C_{q,p,\nu,\alpha}.
    \end{equation}
\end{lemma}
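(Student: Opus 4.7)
The plan is to reduce the $\HH^{\alpha,p}$-norm estimate to a stronger Hilbert-space norm, in which the stochastic-convolution version \eqref{eq:bdgconv} of the Burkholder-Davis-Gundy inequality applies directly. By \autoref{prop:fractionalpowers} together with the Sobolev embeddings of \autoref{sec:analfun}, one has a continuous inclusion $\HH^{2,2}(D)\hookrightarrow \HH^{\alpha,p}(D)$ as soon as $\alpha<1+2/p$, which is automatic under the hypotheses $\alpha<1$ and $p>2$. Denoting by $M_t$ the stochastic convolution on the left-hand side of \eqref{eq:martingaletermbound}, it therefore suffices to prove
\[
\expt{\sup_{t\in [0,1]} \norm{(I+A_2)M_t}_{L^2}^q}\leq C_{q,p,\nu,\alpha}
\]
for some $q>2$; the case of smaller $q$ then follows from Jensen's inequality.

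Since $(I+A_2)$ is closed and commutes with $P_{t-s}$ on $\HH^{2,2}$, and since $\nabla_y p_\epsilon(\cdot,y)$ lies in $\HH^{2,2}_{Neu}$ for every $y\in \bar D$ (the Neumann condition is obtained by differentiating in $y$ the identity $\hat n\cdot \nabla_x p_\epsilon(x,y)|_{x\in\de D}=0$), one can move $(I+A_2)$ inside the stochastic integral to obtain
\[
(I+A_2)M_t=\int_0^t P_{t-s}\Phi_s dW_s,
\]
where $W=(B^i)_i$ is the $\R^N$-valued cylindrical Wiener process driving the particle system and $\Phi_s:\R^N\to L^2$ is the adapted operator $e_i\mapsto \one_{i\in A^n(s)}\omega_i^n (I+A_2)\nabla_y p_\epsilon(\cdot,x_i^n(s))$. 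Applying \eqref{eq:bdgconv} with $S_t=P_t$ then yields
\[
\expt{\sup_{t\in [0,1]} \norm{(I+A_2)M_t}_{L^2}^q}\lesssim_q \expt{\pa{\int_0^1 \sum_{i\in A^n(s)}(\omega_i^n)^2 \norm{\nabla_y p_\epsilon(\cdot,x_i^n(s))}_{\HH^{2,2}}^2ds}^{q/2}}.
\]

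The remaining ingredients are deterministic. Combining \eqref{eq:heat_norm_grad_sup} (with $t=0$, $\alpha=2$, $p=2$) with \autoref{lem:gradientdistributions}, and using that $(I+A_2):L^2\to \HH^{-2,2}$ is an isometry, one gets
\[
\sup_{y\in\bar D}\norm{\nabla_y p_\epsilon(\cdot,y)}_{\HH^{2,2}}=\norm{\nabla P_\epsilon (I+A_2)}_{L^2\to C(\bar D)}\lesssim \epsilon^{-2}.
\]
On the particle side, the bounds $\omega_i^n\lesssim n^{-2}$ together with $\sum_i \omega_i^n\lesssim \norm{\omega_0}_{L^1}+\norm{g}_{L^1([0,1]\times \de D)}$ imply $\sum_{i\in A^n(s)}(\omega_i^n)^2\lesssim n^{-2}$, uniformly in $s$. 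Substituting these two estimates produces a right-hand side of order $(n^{-2}\epsilon^{-4})^{q/2}=(n^{-1}\epsilon^{-2})^q$, which is $O(1)$ precisely under the assumed scaling $\epsilon(n)\gtrsim n^{-1/2}$.

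The main obstacle in this scheme is that the stochastic-convolution BDG inequality \eqref{eq:bdgconv} is formulated only in the Hilbert space $L^2$, not in the $L^p$-based fractional power spaces; the reduction via the embedding $\HH^{2,2}\hookrightarrow \HH^{\alpha,p}$ is what sidesteps this, at the price of making the scaling $\epsilon\gtrsim n^{-1/2}$ essentially sharp along this route. All remaining work amounts to a careful bookkeeping of the heat-kernel estimates already developed in \autoref{sec:analfun}.
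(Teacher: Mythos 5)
Your proof is correct and follows essentially the same route as the paper's: reduce the $\HH^{\alpha,p}$-bound to an $L^2$-based fractional-power norm via Sobolev embedding, apply the stochastic-convolution Burkholder--Davis--Gundy inequality \eqref{eq:bdgconv}, and control the resulting Hilbert--Schmidt norm uniformly over particle positions through \autoref{lem:gradientdistributions} combined with $\omega_i^n\lesssim n^{-2}$ and $N(n)\simeq n^2$. The only (quantitative) difference is that the paper embeds via $\HH^{1+\alpha-2/p,2}\hookrightarrow\HH^{\alpha,p}$, which yields the kernel bound $\epsilon^{-(3+\alpha-2/p)/2}$ and leaves slack in the scaling, whereas your cruder embedding $\HH^{2,2}\hookrightarrow\HH^{\alpha,p}$ pays $\epsilon^{-2}$ and saturates the hypothesis $\epsilon\gtrsim n^{-1/2}$ exactly, which still suffices.
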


\begin{proof}
    By Sobolev embedding $\HH^{1-2/p,2}\hookrightarrow L^p$
    and Burkholder--Davis--Gundy inequality \eqref{eq:bdgconv}
    it holds
    \begin{multline*}
        \expt{\sup_{t\in[0,1]}\norm{\int_0^t \sum_{i\in A^n(s)}\omega_i^n(I+A_p)^{\alpha/2}P_{t-s} \nabla_y p_\epsilon(\cdot,x_i(s))dB^i_s}_{L^p}^q}\\
        \lesssim_p \expt{\sup_{t\in[0,1]}\norm{\int_0^t \sum_{i\in A^n(s)}\omega_i^n (I+A_2)^{\left(1+\alpha-2/p\right)/2} P_{t-s} \nabla_y p_\epsilon(\cdot,x_i(s))dB^i_s}_{L^2}^q}\\
        \lesssim_q \expt{\pa{\int_0^1 \sum_{i\in A^n(s)}(\omega_i^n)^2
        \norm{(I+A_2)^{\left(1+\alpha-2/p\right)/2} \nabla_y p_\epsilon(\cdot,x_i(s))}^2_{L^2}ds}^{q/2}}.
    \end{multline*}
    The inner integrand in the right-hand side
    can be controlled uniformly with respect to the
    particles' positions: by \autoref{lem:gradientdistributions}
    \begin{multline*}
        \sup_{y\in \bar D} \norm{(I+A_2)^{\left(1+\alpha-2/p\right)/2} \nabla_y p_\epsilon(\cdot,y)}^2_{L^2}\\
        =\norm{\nabla P_\epsilon}_{\HH^{-1-\alpha+2/p,2}\to C(\bar D)}^2
        \lesssim_{\alpha,\nu,p} \epsilon^{-3-\alpha+2/p}.
    \end{multline*}
    Notice that the denominator is integrable in $ds$ for all $\delta>0$.
    We now take into account the fact that $\omega^n_i\lesssim n^{-2}(\|\omega_0\|_{L^\infty(D)}+\|g\|_{L^\infty([0,1]\times\partial D})$
    uniformly in $i$, and that $N(n)\simeq n^2$. Hence, the left-hand side of \eqref{eq:martingaletermbound} is bounded
    from above by
    \begin{equation*}
        \text{L.H.S.}\lesssim_{p,q,\nu,\alpha} n^{-q} \epsilon^{-\pa{3+\alpha-2/p}\frac{q}{2}}(\|\omega_0\|_{L^\infty(D)}+\|g\|_{L^\infty([0,1]\times\partial D})^q.
    \end{equation*}
    Taking $\epsilon(n)$ as in the hypothesis, the last quantity is uniformly bounded in $n$.
\end{proof}

\subsection{Estimates for Approximating Solutions}
The following is the core estimate of our argument, 
providing uniform boundedness in strong norms for the approximating processes.

\begin{proposition}\label{prop:unifspacebound}
	Let $p>2$, $\frac{2}{p}<\alpha<1$, and $\epsilon=\epsilon(n)\gtrsim n^{-1/2}$.
    For all $q\geq 2$ it holds
	\begin{equation}\label{eq:spacebound}
		\sup_{n}\expt{\sup_{t\in [0,1]}\norm{\omega^n_t}^q_{\HH^{\alpha,p}}}\le C_{M,q,p,\nu,\alpha}.
	\end{equation}
\end{proposition}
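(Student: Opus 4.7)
The plan is to exploit the Duhamel/mild formulation \eqref{eq:duhamelformula}, bound in $\HH^{\alpha,p}$ each of the four contributions (semigroup action on initial datum, boundary creation, drift, and martingale) separately, then take supremum over $t$, raise to the power $q$, and take expectation. By Minkowski's inequality, the final bound is then the sum of the four pieces. Three of the four pieces are essentially already in hand: for the initial datum contribution $P_t\omega^n_0$ one applies \autoref{lem:initialdatum}; for the boundary creation term $\sum_{i\in A^n_0(t)}\omega_i^n P_{t-t_i^n}p_\epsilon(\cdot,\zeta_i^n)$ one applies \autoref{lem:boundarygeneration}; and for the stochastic convolution one applies \autoref{lem:martingaleterm}. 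Each of these delivers a deterministic (resp.\ in-expectation) bound depending on $p,\alpha,\nu,q$ but uniform in $n$, thanks precisely to the assumption $\epsilon=\epsilon(n)\gtrsim n^{-1/2}$.

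The nontrivial work is the drift term
\begin{equation*}
J_t(x) = \int_0^t P_{t-s}\Theta_s(x)\,ds, \qquad \Theta_s(x)=\int_D F\Bigl(\textstyle\int_D K_n(y,z)dS^n_s(z)\Bigr)\cdot \nabla_y p_\epsilon(x,y)\,dS^n_s(y).
\end{equation*}
The natural first step is the semigroup identity
\begin{equation*}
P_{t-s}[\nabla_y p_\epsilon(\cdot,y)](x) = \int_D p_{t-s}(x,z)\nabla_y p_\epsilon(z,y)\,dz = \nabla_y p_{t-s+\epsilon}(x,y),
\end{equation*}
which absorbs the heat regularization into the kernel. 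Combined with the cutoff $|F|\le M$ and the uniform bound $\|S^n_s\|_{TV}\lesssim \|\omega_0\|_{L^1}+\|g\|_{L^1([0,1]\times\de D)}$, one reduces via duality and \autoref{lem:duality} (equations \eqref{eq:heat_dual_representation_grad}, \eqref{eq:heat_norm_grad_sup}) to controlling, for test functions $\phi\in L^{p'}$ with $\|\phi\|_{L^{p'}}=1$,
\begin{equation*}
\int_0^t \bigl\|\nabla P_{t-s+\epsilon}(I+A_{p'})^{\alpha/2}\phi\bigr\|_{L^\infty}\,ds,
\end{equation*}
which by \autoref{lem:gradientdistributions} is controlled by $\int_0^t (t-s+\epsilon)^{-(\alpha+1)/2-1/p'}ds$.

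The main obstacle is that this naive time-integration produces a singularity of order $\epsilon^{1/p-1/2-\alpha/2}$, which for $\alpha>2/p$ diverges as $\epsilon\to 0$, so the crude pointwise cutoff bound is insufficient. To close the argument one must exploit additional structure: either refine the estimate by splitting $P_{t-s}=P_{(t-s)/2}P_{(t-s)/2}$ and balancing the regularizing action of the first factor against the distributional action on $\Theta_s$, or leverage the alternative bound $|F(u^n_s)|\le |K[\omega^n_s]|\lesssim \|\omega^n_s\|_{L^p}\lesssim \|\omega^n_s\|_{\HH^{\alpha,p}}$ (valid for $p>2$ by \autoref{lem:biotsavart} and Sobolev embedding), thereby obtaining an integrable singular kernel of the form $(t-s)^{-(\alpha+1)/2}$ (using the divergence-free structure of $F(u^n_s)=u^n_s$ for $M$ large, and the vanishing normal trace provided by \autoref{lem:biotsavart}). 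The latter feeds into a singular Gronwall estimate, which together with the discretization error between the atomic measure $dS^n_s$ and the smooth density $\omega^n_s$ (controlled by $1/n$ times inverse powers of $\epsilon$, bounded under $\epsilon\gtrsim n^{-1/2}$) closes the bound uniformly in $n$.

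Finally, combining the four bounds, taking the $q$-th power, the supremum over $t$, and the expectation yields \eqref{eq:spacebound}, with the implied constant depending on the cutoff scale $M$ through the drift estimate but not on $n$.
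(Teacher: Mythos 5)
Your skeleton (mild formulation \eqref{eq:duhamelformula}, Lemmas \ref{lem:initialdatum}, \ref{lem:boundarygeneration}, \ref{lem:martingaleterm} for the creation and martingale terms, a singular Gr\"onwall loop for the drift) is the same as the paper's, and you correctly diagnose that the naive estimate — $|F|\le M$, the empirical measure in total variation, and $\norm{P_{t-s}\nabla_y p_\epsilon(\cdot,y)}_{\HH^{\alpha,p}}$ via \eqref{eq:heat_norm_grad_sup} and \autoref{lem:gradientdistributions} — produces the non-integrable singularity $(t-s+\epsilon)^{-(1+\alpha)/2-1/p'}$. But the fix you sketch does not close the gap. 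The divergence is caused by pairing $\nabla_y p_{t-s+\epsilon}(x,\cdot)$ with the \emph{atomic} measure $S^n_s$, which forces the $L^{p'}\to C(\bar D)$ norm and costs the extra $1/p'$ in the exponent; replacing the cutoff bound $M$ on the velocity by $|K[\omega^n_s]|\lesssim\norm{\omega^n_s}_{L^p}$ only changes the prefactor of the time integral, not its singularity, so it cannot by itself yield the claimed kernel $(t-s)^{-(1+\alpha)/2}$. Neither the appeal to divergence-freeness and vanishing normal trace nor the asserted discretization error ``$S^n_s\approx\omega^n_s\,dx$ up to $1/n$ times inverse powers of $\epsilon$'' is carried to a verified estimate: the relevant test object here is $F(\cdot)\,\nabla_y p_{t-s+\epsilon}(x,\cdot)$, whose higher derivatives bring in further singular powers of $t-s+\epsilon$, and checking that this bookkeeping is compatible with $\epsilon\gtrsim n^{-1/2}$ is exactly the missing core of the argument (your alternative splitting $P_{t-s}=P_{(t-s)/2}P_{(t-s)/2}$ is likewise left unsubstantiated).

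The paper's actual device, absent from your proposal, is the pointwise gradient commutation estimate \eqref{eq:gradientestimateBASE}, $|\nabla P_\epsilon h|\le e^{C\epsilon}P_\epsilon|\nabla h|$, applied while \emph{keeping} the cutoff bound $|F|\le M$. By duality, for $\norm{f}_{L^{p'}}=1$,
\begin{equation*}
\Big|\sum_{i\in A^n(s)}\omega_i^n\, v^n_s(x_i^n(s))\cdot\nabla P_\epsilon\big[(I+A_{p'})^{\alpha/2}P_{t-s}f\big](x_i^n(s))\Big|
\le M e^{C\epsilon}\,\brak{\omega^n_s,\,\big|\nabla (I+A_{p'})^{\alpha/2}P_{t-s}f\big|},
\end{equation*}
where $v^n_s$ denotes the regularized velocity: the estimate transfers $P_\epsilon$ from the test function onto the empirical measure, so $S^n_s$ becomes $\omega^n_s=P_\epsilon S^n_s\in L^p$ and by H\"older one only pays the $L^{p'}\to L^{p'}$ norm of $\nabla(I+A_{p'})^{\alpha/2}P_{t-s}$, i.e.\ $(t-s)^{-(1+\alpha)/2}$ (\autoref{cor:riesztransform} and \eqref{eq:contractivity}), integrable since $\alpha<1$; the factor $\norm{\omega^n_s}_{L^p}\le\norm{\omega^n_s}_{\HH^{\alpha,p}}$ then feeds Gr\"onwall. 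Note finally that this yields only $\sup_t\expt{\norm{\omega^n_t}^q_{\HH^{\alpha,p}}}\le C$; to obtain the expectation of the supremum as in \eqref{eq:spacebound} you need a second pass on the drift term (the paper's $J_2$), using H\"older in time together with the partial bound — your closing step ``take the supremum over $t$ and the expectation'' silently interchanges $\sup_t$ and $\EE$ for the Gr\"onwall-controlled term, which is not justified without this extra argument.
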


\begin{proof}
    In order to lighten notation we denote by $v_t^n(x):=F\pa{\int_D K_n(x,y)dS^n_s(y)}$
    the vector field acting on a single, smoothened particle.
    The mild formulation \eqref{eq:duhamelformula} allows to bound,
    by Minkowski inequality,
    \begin{align*}
        \expt{\norm{\omega^n_t}^q_{\HH^{\alpha,p}}}
        &\lesssim_q \expt{\norm{\sum_{i\in A^n(t)}\omega_i^n P_{t-t_i^n} p_\epsilon(\cdot,\zeta_i^n)}^q_{\HH^{\alpha,p}}}\\
        &\quad +\expt{\norm{\int_0^t P_{t-s} \int_D v^n_s(y)\nabla_y p_\epsilon(x,y)dS^n_s(y)ds}^q_{\HH^{\alpha,p}}}\\
        &\quad +\expt{\norm{\sqrt{2\nu} \int_{0}^t \sum_{i\in A^n(s)}\omega_i^n P_{t-s} \nabla_y p_\epsilon(x,x_i^n(s))dB^i_s}^q_{\HH^{\alpha,p}}}\\
        &=: I_1+I_2+I_3.
    \end{align*}
    The initial and boundary creation terms $I_1$ and the martingale term $I_3$ are uniformly bounded (both in $n$ and in $t\in [0,1]$) respectively 
    thanks to \autoref{lem:initialdatum}, \autoref{lem:boundarygeneration}
    and \autoref{lem:martingaleterm}.
    We thus focus on the nonlinear interaction term $I_2$,
    where the regularizing effect of the heat semigroup
    is essential. Using the gradient estimate \eqref{eq:gradientestimateBASE}
    we thus bound, for $f\in L^{p'}$,
   \begin{align*}
       &\abs{\sum_{i\in A^n(s)}\omega_i v^n_s(x_i(s))\int_D f(x)(I+A_p)^{\alpha/2}P_{t-s}
       \nabla_y p_\epsilon(x,x_i(s))dx}\\
       &\quad =\abs{\sum_{i\in A^n(s)}\omega_i v^n_s(x_i(s))\int_D (I+A_{p'})^{\alpha/2}P_{t-s}f(x)\nabla_y p_\epsilon(x,x_i(s))dx}\\
       &\quad =\abs{ \sum_{i\in A^n(s)}\omega_i v^n_s(x_i(s)) \nabla P_\epsilon[(I+A_{p'})^{\alpha/2}P_{t-s}f](x_i(s))}\\
       &\quad \leq M\sum_{i\in A^n(s)}\omega_i  \abs{\nabla P_\epsilon[(I+A_{p'})^{\alpha/2}P_{t-s}f](x_i(s))}\\
       &\quad \leq M\sum_{i\in A^n(s)}\omega_i e^{C\epsilon}P_\epsilon\abs{\nabla(I+A_{p'})^{\alpha/2}P_{t-s}f}(x_i(s))\\
       &\quad =Me^{C\epsilon} \sum_{i\in A^n(s)}\omega_i  \int_D p_\epsilon(x,x_i(s))  \abs{\nabla(I+A_{p'})^{\alpha/2}P_{t-s}f}(x)dx\\
       &\quad = Me^{C\epsilon} \int_D P_\epsilon(S^n_s)(x)
      \abs{\nabla(I+A_{p'})^{\alpha/2}P_{t-s}f}(x)dx\\
      &\quad \leq Me^{C\epsilon} \norm{\omega^n_s}_{L^p} \norm{\nabla(I+A_{p'})^{\alpha/2}P_{t-s}f}_{L^{p'}}.
   \end{align*}
   By duality, this implies that
   \begin{multline*}
       \norm{\sum_{i\in A^n(s)}\omega_i v^n_s(x_i(s))(I+A_p)^{\alpha/2}P_{t-s}\nabla_y p_\epsilon(\cdot,x_i(s))}_{L^p}\\
       \leq
       \frac{Me^{C\epsilon}}{(t-s)^{(1+\alpha)/2}} \norm{\omega^n_s}_{L^p}
       \leq \frac{Me^{C\epsilon}}{(t-s)^{(1+\alpha)/2}} \norm{\omega^n_s}_{\HH^{\alpha,p}},
   \end{multline*}
   so that
   \begin{multline*}
       I_2^{1/q}\leq \int_0^t \expt{\norm{\sum_{i\in A^n(s)}\omega_i v^n_s(x_i(s))P_{t-s}\nabla_y p_\epsilon(\cdot,x_i(s))}_{\HH^{\alpha,p}}^q}^{1/q}ds\\
       \leq C_{\epsilon}\int_0^t \frac{M}{(t-s)^{(1+\alpha)/2}}\mathbb{E}\left[\lVert \omega^n_s\rVert_{\HH^{\alpha,p}}^q\right]^{1/q}\ ds
   \end{multline*}
   with the constant $C_\epsilon$ decreasing as $\epsilon\rightarrow 0$,
   in particular uniformly bounded in $n$.
   Altogether, we arrive to the integral inequality
   \begin{equation*}
    \mathbb{E}\left[\norm{\omega^n_s}^q_{\HH^{\alpha,p}}\right]^{1/q}\leq C_{p,q,\nu,\alpha,\omega_0}+ C\int_0^t \frac{M}{(t-s)^{(1+\alpha)/2}}\mathbb{E}\left[\lVert \omega^n_s\rVert_{\HH^{\alpha,p}}^q\right]^{1/q}\ ds
   \end{equation*}
   which, by Gr\"onwall's lemma (as $\alpha<1$), implies
   \begin{equation}\label{eq:partialbound}
       \sup_{n}\sup_{t\in [0,1]}\expt{\norm{\omega^n_t}^q_{\HH^{\alpha,p}}}\le C_{M,q,p,\nu,\alpha}.
   \end{equation}
   In order to complete the proof, we apply again the mild formulation
   of the dynamics \eqref{eq:duhamelformula} to estimate
   \begin{align*}
        \expt{\sup_{t\in [0,1]}\norm{\omega^n_t}^q_{\HH^{\alpha,p}}}
        &\lesssim_q \expt{\sup_{t\in [0,1]}\norm{\sum_{i\in A^n(t)}\omega_i^n P_{t-t_i^n} p_\epsilon(\cdot,\zeta_i^n)}^q_{\HH^{\alpha,p}}}\\
        &\quad +\expt{\sup_{t\in [0,1]}\norm{\int_0^t P_{t-s} \int_D v^n_s(y)\nabla_y p_\epsilon(x,y)dS^n_s(y)ds}^q_{\HH^{\alpha,p}}}\\
        &\quad +\expt{\sup_{t\in [0,1]}\norm{\sqrt{2\nu} \int_{0}^t \sum_{i\in A^n(s)}\omega_i^n P_{t-s} \nabla_y p_\epsilon(x,x_i^n(s))dB^i_s}^q_{\HH^{\alpha,p}}}\\
        &=: J_1+J_2+J_3.
    \end{align*}
    Once again, the initial and boundary creation terms $J_1$ and the martingale term $J_3$ are uniformly bounded in $n$ respectively 
    thanks to \autoref{lem:initialdatum}, \autoref{lem:boundarygeneration}
    and \autoref{lem:martingaleterm}.
    As for $J_2$, since
    \begin{equation*}
       J_2\leq \expt{\pa{\sup_{t\in [0,1]}\int_0^t \norm{P_{t-s} \int_D v^n_s(y)\nabla_y p_\epsilon(x,y)dS^n_s(y)}_{\HH^{\alpha,p}}ds}^q},
   \end{equation*}
   we can repeat the computations performed to control $I_2$ obtaining
   \begin{equation*}
       J_2\lesssim_{p,q,\nu,\alpha} 
       \expt{\sup_{t\in [0,1]}\pa{\int_0^t \frac{Me^{C\epsilon}}{(t-s)^{(1+\alpha)/2}} \norm{\omega^n_s}_{\HH^{\alpha,p}}ds}^q},
   \end{equation*}
   where we now apply H\"older inequality,
   \begin{equation*}
       \int_0^t \norm{\omega^n_s}_{\HH^{\alpha,p}} \frac{ds}{(t-s)^{(1+\alpha)/2}}
       \leq \pa{\int_0^t \frac{ds}{(t-s)^{r(1+\alpha)/2}}}^{1/r}
       \pa{\int_0^t \norm{\omega^n_s}_{\HH^{\alpha,p}}^{r'}ds }^{1/r'},
   \end{equation*}
   in which we can choose a small enough $r>0$ (depending on $\alpha$)
   so that the first factor on the right-hand side is finite and $q/r'<1$, hence
   \begin{equation*}
       J_2\lesssim_{p,q,\nu,\alpha} 
       \expt{\int_0^1 \norm{\omega^n_s}_{\HH^{\alpha,p}}^{r'}ds}
       \leq \sup_{t\in [0,1]} \expt{\norm{\omega^n_t}_{\HH^{\alpha,p}}^{r'}},
   \end{equation*}
   which is uniformly bounded in $n$ by \eqref{eq:partialbound}.
\end{proof}

It is worth noticing that the gradient estimates for $P_t$ are crucial
in controlling the nonlinear term of the dynamics, but the regularizing effect
of $P_t$ was neglected in estimating the initial empirical measure in \autoref{lem:initialdatum} and the martingale terms in \autoref{lem:martingaleterm} (due to the application of BDG inequality).
The singularity of $\nabla p_\epsilon$ appearing in the initial empirical measure and in the stochastic integrals 
is \emph{not} improved by the heat semigroup and produces a restriction on the asymptotic behavior of $\epsilon=\epsilon(n)$.

Thanks to the good control provided by \autoref{prop:unifspacebound},
we can estimate time increments 
--which we need for the  equicontinuity part of our compactness argument--
in a much weaker norm, thus allowing to exploit the weak formulation of the dynamics,
easier to deal with compared to the variation-of-constants form but producing bounds
in weaker norms.

\begin{proposition}\label{prop:uniftimebound}
    Let $p>2$, $\frac{2}{p}<\alpha<1$, $\epsilon=\epsilon(n)\gtrsim n^{-1/2}$,
    $q\geq 2$, and $(\tau_n)_{n\in \N_0}$ be a sequence of $\F$-stopping times in $[0,1]$.
    It holds, for all $r\in (0,1)$,
    \begin{gather} \label{eq:timebound}
		\expt{\norm{\omega^n_{(\tau_n+r)\wedge 1}-\omega^n_{\tau_n}}^q_{\HH^{-2,p}}}
		\leq \left(r^{q/2}+\frac{1}{n^q}\right)C_{M,q,p,\nu,\alpha}.
	\end{gather}
\end{proposition}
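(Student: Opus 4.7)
The plan is to work by duality in $\HH^{-2,p}$. For any $f\in\HH^{2,p'}$ with $\norm{f}_{\HH^{2,p'}}\le 1$ and $\sigma_n:=(\tau_n+r)\wedge 1$, self-adjointness of $P_\epsilon$ gives $\langle\omega^n_t,f\rangle=\int_D P_\epsilon f\,dS^n_t$, and the test function $\phi:=P_\epsilon f$ is smooth with $\nabla\phi\cdot\hat n=0$, so \autoref{cor:itoSDE} applies to it. Subtracting the weak formulation \eqref{eq:itoS} at $\sigma_n$ and at $\tau_n$ splits the pairing $\langle\omega^n_{\sigma_n}-\omega^n_{\tau_n},f\rangle$ into four pieces---a particle-generation sum, a Laplacian drift, a nonlinear drift, and a Brownian martingale---each of which I bound in $L^q(\Omega)$ uniformly over such $f$.

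For the generation term $\sum_{\tau_n<t_i^n\le\sigma_n}\omega_i^n P_\epsilon f(\zeta_i^n)$, the Sobolev embedding $\HH^{2,p'}\hookrightarrow C(\bar D)$ and contractivity of $P_\epsilon$ bound $\abs{P_\epsilon f(\zeta_i^n)}\lesssim \norm{f}_{\HH^{2,p'}}$, so the contribution is controlled by the total weight of newly generated particles; since $\omega_i^n\lesssim n^{-2}$ and at most $rn+1$ generation-time slices fall into $(\tau_n,\sigma_n]$, each carrying $n$ boundary sites, this total weight is $\lesssim r+1/n$. For the Laplacian term, $\Delta P_\epsilon f=P_\epsilon\Delta f$ combined with self-adjointness turns it into $\nu\int_{\tau_n}^{\sigma_n}\!\int_D\Delta f\cdot\omega^n_s\,dx\,ds$, bounded by $\nu r\norm{f}_{\HH^{2,p'}}\sup_s\norm{\omega^n_s}_{L^p}$ via H\"older. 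For the nonlinear drift, $\abs{F}\le M$ together with the pointwise gradient estimate $\abs{\nabla P_\epsilon f}\le e^{C\epsilon}P_\epsilon\abs{\nabla f}$ from \eqref{eq:gradientestimateBASE} yields, after another self-adjointness step, $\lesssim Mr\norm{f}_{\HH^{2,p'}}\sup_s\norm{\omega^n_s}_{L^p}$.

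For the martingale term, BDG applied between the two stopping times produces
\begin{equation*}
\EE\bra{\pa{\int_{\tau_n}^{\sigma_n}\sum_i(\omega_i^n)^2\abs{\nabla P_\epsilon f(x_i^n(s))}^2\,ds}^{q/2}};
\end{equation*}
invoking $\omega_i^n\lesssim n^{-2}$, the bound $\abs{\nabla P_\epsilon f}^2\le CP_\epsilon(\abs{\nabla f}^2)$ (Jensen applied to \eqref{eq:gradientestimateBASE}), self-adjointness, and $H^{1,p'}\hookrightarrow L^{2p'}$, this is $\lesssim (r/n^2)^{q/2}\norm{f}_{\HH^{2,p'}}^q\sup_s\norm{\omega^n_s}_{L^p}^{q/2}$. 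Assembling the four bounds, taking the supremum over $\norm{f}_{\HH^{2,p'}}\le 1$, and invoking \autoref{prop:unifspacebound} together with $\HH^{\alpha,p}\hookrightarrow L^p$ to absorb the $L^p$-norms uniformly in $n$, produces $(r+1/n)^q+r^q+(r/n^2)^{q/2}$, which collapses to $r^{q/2}+1/n^q$ since $r\in(0,1)$. The main delicacy is the generation term: because $\tau_n$ need not be aligned with the grid of generation times, the partial-cell bookkeeping at $\tau_n$ and $\sigma_n$ is what produces the $1/n^q$ correction, mirroring the remainder analysis of \autoref{lem:boundarygeneration}; the other three contributions are soft consequences of self-adjointness of $P_\epsilon$, the gradient estimate \eqref{eq:gradientestimateBASE}, and the uniform bound of \autoref{prop:unifspacebound}.
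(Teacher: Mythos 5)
Your decomposition and your treatment of the three drift-type contributions essentially reproduce the paper's proof: both arguments start from the It\^o/weak formulation paired with $P_\epsilon f$ (equivalently, duality of $\HH^{-2,p}$ against $\HH^{2,p'}$), bound the generation term by the total weight $\lesssim r+1/n$ of particles created in $(\tau_n,(\tau_n+r)\wedge 1]$, the Laplacian term by $r\,\sup_s\norm{\omega^n_s}_{L^p}$, and the nonlinear term via the cutoff $M$ together with the gradient estimate \eqref{eq:gradientestimateBASE}. For these three terms your bounds are pathwise and uniform in the test function $f$, so taking the supremum over $\norm{f}_{\HH^{2,p'}}\le 1$ before the expectation is legitimate, and \autoref{prop:unifspacebound} closes the estimate exactly as in the paper.

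The martingale term, however, contains a genuine gap. You apply the scalar Burkholder--Davis--Gundy inequality to the real-valued martingale $s\mapsto\langle M_s,f\rangle$ for each \emph{fixed} $f$, obtaining a bound on $\expt{\abs{\langle M_{(\tau_n+r)\wedge1}-M_{\tau_n},f\rangle}^q}$, and only afterwards take the supremum over $f$. But $\expt{\norm{\cdot}^q_{\HH^{-2,p}}}=\expt{\sup_f\abs{\langle\cdot,f\rangle}^q}$, and $\sup_f\expt{\cdots}\le\expt{\sup_f\cdots}$ goes the wrong way: the optimizing $f$ (and the BDG exceptional sets) depend on the sample point, so the interchange of supremum and expectation is not justified. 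This is precisely why the paper does not argue by duality for this term: it embeds the relevant norm into the Hilbert scale via $\HH^{1-2/p,2}\hookrightarrow L^p$ and applies the $L^2$-valued BDG inequality of \autoref{prop:bdg} to the stochastic integral as a whole, bounding the Hilbert--Schmidt norm of the integrand uniformly over particle positions by $\sup_{y\in\bar D}\norm{(I+A_2)^{-1/2-1/p}\nabla_y p_\epsilon(\cdot,y)}_{L^2}\lesssim\epsilon^{-1}$ (via \autoref{lem:gradientdistributions}), which together with $\sum_i(\omega^n_i)^2\lesssim n^{-2}$ and $\epsilon\gtrsim n^{-1/2}$ yields the $r^{q/2}$ contribution. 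Your pathwise quadratic-variation bound through $\abs{\nabla P_\epsilon f}^2\le C\,P_\epsilon(\abs{\nabla f}^2)$ is correct for fixed $f$ (and in that setting even sharper than the paper's sup-over-positions bound), but to make the step rigorous you must replace the fixed-$f$ BDG by the vector-valued one; once that substitution is made, the rest of your argument goes through.
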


\begin{proof}
Substituting the weak formulation of particle dynamics \eqref{eq:itoOmega},
we estimate by Minkowski inequality:
\begin{align*}
    &\expt{\norm{\omega^n_{(\tau_n+r)\wedge 1}-\omega^n_{\tau_n}}^q_{\HH^{-2,p}}}\\
            &\qquad\lesssim_q \expt{\norm{\sum_{i\in A^n_0((\tau_n+r)\wedge 1)\setminus A^n_0(\tau_n)}\omega_i^n p_\epsilon(\cdot,\zeta_i^n)}_{\HH^{-2,p}}^q}\\ 
            &\qquad+\expt{\norm{\int_{\tau_n}^{(\tau_n+r)\wedge 1} \nu\Delta \omega^n_sds}_{\HH^{-2,p}}^q}\\ \nonumber
			&\qquad +\expt{\norm{\int_{\tau_n}^{(\tau_n+r)\wedge 1} \int_D F\pa{K[\omega^n](y)}\nabla_y p_\epsilon(\cdot,y)dS^n_s(y)ds}_{\HH^{-2,p}}^q}\\ \nonumber
			&\qquad  +\expt{\norm{\int_{\tau_n}^{\tau_{n}+r} \sqrt{2\nu} \sum_{i\in A^n(s)}\omega_i^n \nabla_y p_\epsilon(\cdot,x_i^n(s))dB^i_s}_{\HH^{-2,p}}^q}\\ 
            &\qquad =:I_1+I_2+I_3+I_4.
\end{align*}
We now proceed estimating each term separately.

\emph{Estimates on generation term} $I_1$.
\begin{align*}
    I_1
    & \leq 
    \expt{\pa{\sum_{i\in A^n(1)}\omega_i^n\one_{\tau_n<t_i\leq (\tau_n+r)\wedge 1}\norm{(I+A_p)^{-1}p_{\epsilon}(\cdot,\zeta_i^n)}_{L^p}}^q}\\ 
    & \leq 
    \expt{\pa{\sum_{i\in A^n(1)}\omega_i^n\one_{\tau_n<t_i\leq (\tau_n+r)\wedge 1}}^q}\sup_{y\in \partial D}\norm{(I+A_p)^{-1}p_{\epsilon}(\cdot,y)}_{L^p}^q.
\end{align*}
The first factor on the right-hand side of the latter is controlled by
\begin{multline*}
    \expt{\pa{\sum_{i\in A^n(1)}\omega_i^n\one_{\tau_n<t_i\leq (\tau_n+r)\wedge 1}}^q}\\
    \leq \expt{\left(\int_{\tau_n-\frac{1}{n}\lor 0}^{\tau_n+\frac{1}{n}+r\wedge 1}\int_{\partial D}g(s,y) dyds\right)^q }
    \leq \left(r+\frac{2}{n}\right)^q\|g\|_{L^\infty([0,1];L^1(\partial D))}^q,
\end{multline*}
so we are left to control the second factor.
For $f\in L^{p'}$, $y\in \de D$ and a sequence $y_k\in D$ converging to $y$,
it holds
\begin{align*}
    \int_D (I+A_p)^{-1}p_{\epsilon}(x,y)f(x) dx
    &=\int_D p_{\epsilon}(x,y)(I+A_{p'})^{-1}f(x) dx\\
    &=\lim_{k\to\infty}\int_D p_{\epsilon}(x,y_k)(I+A_{p'})^{-1}f(x) dx,
\end{align*}
the exchange between the limit and the integral being allowed thanks to the fact that, by \autoref{prop:gradientestimates}, $p_{\epsilon}(x,y)\leq \frac{C}{\epsilon}e^{-\lvert x-y\rvert^2/(c\epsilon)}$ $\forall x,y\in D$.
We can thus estimate:
\begin{align*}
    &\sup_{y\in \partial D}\norm{(I+A_p)^{-1}p_{\epsilon}(\cdot,y)}_{L^p}
    =\sup_{{y\in \partial D}, \norm{f}_{L^{p'}}=1}
    \abs{\int_D (I+A_p)^{-1}p_{\epsilon}(x,y)f(x) dx}\\ 
    &\qquad\leq \sup_{{y\in \partial D}, \norm{f}_{L^{p'}}=1}\lim_{k\to\infty}\left\lvert\int_D p_{\epsilon}(x,y_k)(I+A_{p'})^{-1}f(x) dx\right\rvert\\ 
    &\qquad\leq  \sup_{{y\in  \bar{D}}, \norm{f}_{L^{p'}}=1}\lvert (I+A_{p'})^{-1}P_{\epsilon}f(y)\rvert
    \leq \norm{(I+A_{p'})^{-1}P_{\epsilon}}_{L^{p'}\to C(\bar{D})}\\ 
    &\qquad\leq \norm{(I+A_{p'})^{-1}}_{L^{p'}\to C(\bar{D})}
    \norm{P_{\epsilon}}_{L^{p'}\to L^{p'}}\lesssim_{p,\nu} 1,
\end{align*}
where $\norm{(I+A_{p'})^{-1}}_{L^{p'}\to C(\bar{D})}\lesssim_p\norm{(I+A_{p'})^{-1}}_{L^{p'}\to \HH^{2,p'}}\lesssim_p 1$
follows by Morrey's inequality.

\emph{Estimates on the diffusion term} $I_2$ follow from a 
straightforward application of H\"older inequality,
\begin{align*}
    I_2 & \leq\expt{\pa{\int_{\tau_n}^{(\tau_n+r)\wedge 1}\norm{\nu\Delta\omega^n_s}_{\HH^{-2,p}}ds}^q}\\
    &\lesssim_{\nu}r^{q-1} \expt{\int_{\tau_n}^{(\tau_n+r)\wedge 1}\norm{\omega^n_s}^q_{L^p}ds }
    \leq r^{q}\expt{\sup_{t\in [0,1]}\norm{\omega^n_t}_{L^p}^q}.
\end{align*}

\emph{Estimates on the nonlinear term} $I_3$.
We have:
\begin{align*}
    I_3& \leq \expt{\pa{\int_{\tau_n}^{(\tau_n+r)\wedge 1}\norm{\int_D F\pa{K[\omega^n](y)}\nabla_y p_\epsilon(\cdot,y)dS^n_s(y)}_{\HH^{-2,p}}ds}^q}\\ & \leq r^{q-1}\expt{\int_{\tau_n}^{(\tau_n+r)\wedge 1}\norm{\int_D F\pa{K[\omega^n](y)}\nabla_y p_\epsilon(\cdot,y)dS^n_s(y)}_{\HH^{-2,p}}^qds},
\end{align*}
in which we control, somewhat analogously to \autoref{prop:unifspacebound},
\begin{align*}
    &\norm{\int_D F\pa{K[\omega^n](y)}\nabla_y p_\epsilon(\cdot,y)dS^n_s(y)}_{\HH^{-2,p}}\\ 
    &\quad =\norm{\int_D  F\pa{K[\omega^n](y)}(I+A_p)^{-1}\nabla_y p_\epsilon(\cdot,y)dS^n_s(y)}_{L^p}\\ 
    &\quad =\sup_{f\in L^{p'},\norm{f}_{L^{p'}}=1} \int_D F(K[\omega^n](y)) \langle (I+A_p)^{-1}\nabla_y p_{\epsilon}(\cdot,y),f\rangle dS^n_s(y)\\ 
    &\quad  \lesssim_M \sup_{f\in L^{p'},\norm{f}_{L^{p'}}=1} \int_D \lvert\nabla P_{\epsilon}(I+A_{p'})^{-1}f(y)\rvert dS^n_s(y)\\ 
    &\quad  \lesssim_{\nu,p} e^{C\epsilon}\sup_{f\in L^{p'},\norm{f}_{L^{p'}}=1}\int_D dS^n_s(y)\int_D p_{\epsilon}(y,x)\lvert \nabla(I+A_{p'})^{-1}f(x)\rvert dx\\ 
    &\quad =\sup_{f\in L^{p'},\norm{f}_{L^{p'}}=1} \brak{\omega^n_s,\abs{\nabla(I+A_{p'})^{-1}f}}
    \lesssim \norm{\omega^n_s}_{L^p}.
\end{align*}
Therefore $I_3$ is bounded, up to some constant independent of $n$, by $$r^{q}\expt{\sup_{t\in [0,1]}\norm{\omega^n_t}_{L^p}^q}.$$

\emph{Estimates on the martingale term} $I_4$.
By Sobolev embedding and Burkholder--Davis--Gundy inequality, \autoref{prop:bdg}, 
\begin{align*}
    I_4 
    & \lesssim_{\nu,p} \expt{\norm{\int_{\tau_n}^{\tau_{n}+r}  \sum_{i\in A^n(s)}\omega_i^n (I+A_2)^{-1}\nabla_y p_\epsilon(\cdot,x_i^n(s))dB^i_s}_{\HH^{1-2/p,2}}^q}\\ 
    & = \expt{\norm{\sum_{i=1}^N\omega_i^n \int_{0}^{1} \one_{t_i^n\geq s}\one_{\tau_n\leq s\leq (\tau_n+r)\wedge 1}(I+A_2)^{-1/2-1/p}\nabla_y p_\epsilon(\cdot,x_i^n(s))dB^i_s}_{L^2}^q}
    \\ 
    & \lesssim_q \expt{\pa{\sum_{i=1}^N(\omega^n_i)^2\int_{\tau_n}^{(\tau_n+r)\wedge 1}\one_{t_i^n\geq s}\norm{(I+A_2)^{-1/2-1/p}\nabla_y p_{\epsilon}(\cdot,x_i^n(s))}^2_{L^2}ds}^{\frac{q}2}}.
\end{align*}
Since $N=N(n)\simeq n^2$ and $\omega^n_i\lesssim n^{-2}\|g\|_{L^\infty([0,1]\times \partial D)}$ uniformly in $i$,
and replacing the expectation involving particles's positions
with a supremum over $\bar D$, we obtain
\begin{align*}
    I_4
    &\lesssim_{\nu,p,q} \frac{r^{q/2}}{n^q}
    \sup_{y\in \bar D}\norm{(I+A_2)^{-1/2-1/p}\nabla_y p_{\epsilon}(\cdot,y)}^q_{L^2},
\end{align*}
which we combine with a consequence of \autoref{lem:gradientdistributions},
\begin{equation*}
    \sup_{y\in \bar D}\norm{(I+A_2)^{-1/2-1/p}\nabla_y p_{\epsilon}(\cdot,y)}_{L^2}
    \leq \sup_{y\in \bar D}\lVert \nabla_y p_{\epsilon}(\cdot,y)\rVert_{L^2}
    =\lVert \nabla P_{\epsilon} \rVert_{L^{2}\rightarrow C(\bar D)}\lesssim \epsilon^{-1},
\end{equation*}
and the assumption $\epsilon(n)\gtrsim n^{-1/2}$, concluding
that $I_4\lesssim_{\nu,p,q} r^{q/2}$.
\end{proof}

\section{Proof of the Main Result}\label{sec:maintheorem}

The proof of \autoref{thm:main} proceeds as follows.
We combine the estimates of the previous section with Aldous'
Lemma in order to obtain tightness on the space of \emph{càdlàg}
functions taking values in Sobolev spaces.
In order to verify that the limiting dynamics coincides with the Navier-Stokes
equations \eqref{eq:neuNS} we need almost sure convergence in such a space,
which we obtain by changing the underlying probability space
by Skorohod theorem. Convergence in the original probability space is recovered by uniqueness of the deterministic limit.

In what follows, according to the hypothesis of \autoref{thm:main},
we tacitly assume that $p>2$ and $\frac{2}{p}<\alpha<1$ are fixed,
as well as $\omega_0\in \lip(\bar D) $ and $g\in \B_b([0,1]\times \de D)$.
We denote by $\omega$ the unique solution of \eqref{eq:neuNS} given
by \autoref{prop:H-1method} with initial datum $\omega_0$ and Neumann source $g$.

\subsection{Compactness Argument}\label{sec:compactness}
Let $(\Omega,\F,\PP)$ be a complete, filtered probability space satisfying the standard hypothesis, on which it is defined a sequence $(B^i)_{i\in \N}$ of independent $\F$-Brownian motions. 
For all $n\geq 1$, \emph{on this same} probability space
we can consider the well-posed dynamics of \autoref{prop:wellposSDE},
since the latter is a probabilistically strong existence and uniqueness result.
In other words, we can consider for all $n$ the dynamics of particles $(x_1^n(t),\dots, x_N^n(t))_{t\in [0,1]}$ and the one of the regularized empirical measure $\omega^n_t=\sum_{i\in A^n(t)}\omega_i^n p_\epsilon(\cdot,x_i^n(t))$
as stochastic processed defined on $(\Omega,\F,\PP)$ and $\F$-adapted.

We denote by $\L_n$ the law of $\omega_n$ on $\DD([0,1],\HH^{\alpha,p})$;
we can actually consider any parameters $\alpha,p$ since 
samples of the process $\omega^n$ are smooth in the space variable,
but time dependence is at best $\emph{càdlàg}$ due to the creation of new particles.

We need first to show tightness of the laws $(\L_n)_{n\in\N}$:

\begin{lemma}\label{lem:tightness}
    For all $\frac{2}{p}<\eta<\alpha$, the sequence of laws $(\L_n)_{n\in\N}$
    is tight on $\DD([0,T];\HH^{\eta,p})$.
\end{lemma}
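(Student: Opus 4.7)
The plan is to apply Aldous' criterion (\autoref{prop:aldous}) with $S = \HH^{\eta,p}$, using \autoref{prop:unifspacebound} (uniform bound in $\HH^{\alpha,p}$ with $\alpha>\eta$) to get pointwise-in-time tightness, and \autoref{prop:uniftimebound} (time increment bound in $\HH^{-2,p}$) combined with interpolation to get the uniform stopping-time estimate.

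For the first condition of Aldous' criterion, I would fix any $t\in[0,1]$ and note that \autoref{prop:unifspacebound} with $q=2$ gives
\begin{equation*}
\sup_n \EE\bra{\norm{\omega^n_t}_{\HH^{\alpha,p}}^2} \le C_{M,p,\nu,\alpha}.
\end{equation*}
Since $\eta<\alpha$, the embedding $\HH^{\alpha,p}\hookrightarrow \HH^{\eta,p}$ is compact (\autoref{prop:fractionalpowers} identifies $\HH^{\alpha,p}$ with the relevant Bessel potential space away from boundary-threshold issues, and we can shrink $\alpha$ slightly below $1+1/p$ if needed). By Markov's inequality, the laws of $\omega^n_t$ are tight on $\HH^{\eta,p}$.

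For the second condition, the idea is to interpolate the time increment. Pick $\theta = (\alpha-\eta)/(\alpha+2)\in(0,1)$, so that $\eta = -2\theta + (1-\theta)\alpha$. By the interpolation property of Bessel potential spaces (\autoref{prop:fractionalpowers}),
\begin{equation*}
\norm{u}_{\HH^{\eta,p}} \le C \norm{u}_{\HH^{-2,p}}^\theta \norm{u}_{\HH^{\alpha,p}}^{1-\theta}.
\end{equation*}
Applying this to $u=\omega^n_{(\tau_n+r)\wedge 1}-\omega^n_{\tau_n}$, together with H\"older's inequality (in probability), and invoking \autoref{prop:unifspacebound} for the $\HH^{\alpha,p}$-factor and \autoref{prop:uniftimebound} for the $\HH^{-2,p}$-factor with $q=2$, we get
\begin{equation*}
\EE\bra{\norm{\omega^n_{(\tau_n+r)\wedge 1}-\omega^n_{\tau_n}}_{\HH^{\eta,p}}}
\lesssim \pa{r^{1/2}+n^{-1}}^\theta \cdot C_{M,p,\nu,\alpha}^{1-\theta}.
\end{equation*}
By Markov's inequality, for every $\delta>0$,
\begin{equation*}
\PP\pa{\norm{\omega^n_{(\tau_n+r)\wedge 1}-\omega^n_{\tau_n}}_{\HH^{\eta,p}}>\delta}
\lesssim \frac{(r^{1/2}+n^{-1})^\theta}{\delta}.
\end{equation*}
Given $\epsilon,\delta>0$, choose $n_0$ so large that $n_0^{-1}\le r_0^{1/2}$ for some small $r_0>0$, and then shrink $r_0$ further so that the right-hand side is below $\epsilon$ uniformly in $n\ge n_0$ and $r\in[0,r_0]$. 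This yields \eqref{eq:aldouscondition}.

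The main (mild) obstacle is simply to verify the interpolation inequality in the correct form on the domain $D$, but this follows directly from the identification $\HH^{2\alpha,p}=[L^p,\D(A_p)]_\alpha$ and its extension to negative exponents, both recalled in \autoref{prop:fractionalpowers}. With both conditions of \autoref{prop:aldous} verified, tightness of $(\L_n)$ on $\DD([0,1];\HH^{\eta,p})$ follows.
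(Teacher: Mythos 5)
Your proposal is correct and follows essentially the same route as the paper's proof: Aldous' criterion, with condition (1) from \autoref{prop:unifspacebound}, Markov's inequality and the compact embedding $\HH^{\alpha,p}\hookrightarrow\HH^{\eta,p}$, and condition (2) from the interpolation inequality between $\HH^{-2,p}$ and $\HH^{\alpha,p}$ (with the same exponent $\theta=(\alpha-\eta)/(2+\alpha)$) combined with H\"older, \autoref{prop:uniftimebound}, \autoref{prop:unifspacebound} and Markov. The only difference is bookkeeping of moments — you use first moments (obtained from the $q=2$ bounds via Jensen) and Markov with power one, while the paper takes $q>2$ and chooses the H\"older exponents so that both factors carry moments of order at least two — and your choice of $r_0,n_0$ should simply be made in the order ``$r_0$ small first, then $n_0\gtrsim r_0^{-1/2}$'', which is a cosmetic fix.
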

\noindent
We then need to show that the limiting dynamics is actually 
a solution of \eqref{eq:neuNS}.

\begin{lemma}\label{lem:limitingdynamics}
    For all $\frac{2}{p}<\eta<\alpha$,
    any weak limit point of the sequence $(\L_n)_{n\in\N}$ of measures on 
    $\DD([0,T];\HH^{\eta,p})$
    is concentrated on the unique solution $\omega$ of \eqref{eq:neuNS}
    given by \autoref{prop:H-1method};
    in other words there exists a unique limit point, 
    $\delta_\omega$.
\end{lemma}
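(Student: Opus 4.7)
The plan is to take an arbitrary weakly convergent subsequence of $(\L_n)$ with limit $\L_\infty$, show that $\L_\infty = \delta_\omega$, where $\omega$ is the unique weak solution of \eqref{eq:neuNS} given by \autoref{prop:H-1method}, and conclude by the arbitrariness of the subsequence that the whole sequence converges to $\delta_\omega$. By Skorokhod's representation theorem one can realize this convergence almost surely on an auxiliary probability space, producing processes $\tilde\omega^n$ with law $\L_n$ that converge almost surely in $\DD([0,1];\HH^{\eta,p})$ to some $\tilde\omega$ with law $\L_\infty$. Since $\eta > 2/p$, Morrey's inequality gives $\HH^{\eta,p}\hookrightarrow C(\bar D)$, so $\tilde\omega^n_t \to \tilde\omega_t$ uniformly in $x$ at every continuity point $t$ of $\tilde\omega$.

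To identify $\tilde\omega$, I would pass to the limit in the weak formulation \eqref{eq:itoS} satisfied by the (lifted) empirical measures $\tilde S^n$. For a test function $\phi \in C^2(\bar D)$ with $\nabla\phi \cdot \hat n = 0$ on $\partial D$, the first step is to rewrite every term as an integral against $\tilde\omega^n = P_\epsilon \tilde S^n$ rather than against $\tilde S^n$, using $\int \psi \, d\tilde S^n = \int \psi\, \tilde\omega^n\, dx + \int (\psi - P_\epsilon \psi)\, d\tilde S^n$; here $\psi$ plays the role of $\phi$, $\Delta\phi$, and (more delicately, since it depends on $n$) of the nonlinear integrand, and the error is negligible as $\epsilon(n)\to 0$ because the total mass $\sum_i \omega_i^n$ is uniformly bounded. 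The initial datum term $\int \phi\, d\tilde S^n_0$ is a Riemann sum converging to $\int \phi\, \omega_0\, dx$ by Lipschitz regularity of $\omega_0$, and the boundary-creation term $\sum_{i\in A_0^n(t)} \omega_i^n \phi(\zeta_i^n)$ similarly converges to $\nu \int_0^t \int_{\partial D} \phi g\, d\sigma ds$. The diffusion term passes to the limit by uniform convergence and dominated convergence, while the martingale term vanishes in $L^2(\tilde\PP)$ by a BDG estimate in a weak enough norm, with bound of order $n^{-1}$ as in \autoref{prop:uniftimebound}.

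The main obstacle will be the nonlinear term. Using the definition of $K_n$, one has $\int_D K_n(x,y)\, d\tilde S^n_s(y) = K[\tilde\omega^n_s](x)$, and by \autoref{lem:biotsavart} the Biot-Savart operator $K$ sends $C(\bar D)$ continuously into $C(\bar D;\R^2)$. Combined with Lipschitz continuity of $F$, this yields uniform convergence $F(K[\tilde\omega^n_s]) \to F(K[\tilde\omega_s])$ in $C(\bar D;\R^2)$. Together with the replacement of $d\tilde S^n_s$ by $\tilde\omega^n_s\, dx$ and with $\tilde\omega^n \to \tilde\omega$ uniformly, the nonlinear term converges to $\int_0^t \int F(K[\tilde\omega_s]) \cdot \nabla\phi\, \tilde\omega_s\, dx\, ds$, with dominated convergence applicable uniformly in $n$ thanks to the cutoff $M$. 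The outcome is that $\tilde\omega$ is $\tilde\PP$-a.s.\ a weak solution of the cutoff equation \eqref{eq:cneuNS} with data $(\omega_0, g)$. By \autoref{prop:pdeapriori}, for $M$ larger than some $M_0(\omega_0, g)$, equations \eqref{eq:cneuNS} and \eqref{eq:neuNS} share the same unique weak solution $\omega$; hence $\tilde\omega = \omega$ almost surely and $\L_\infty = \delta_\omega$, as desired.
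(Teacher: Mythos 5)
Your overall strategy coincides with the paper's: Skorokhod representation, term-by-term passage to the limit in the weak formulation, identification of the limit as the unique weak solution of the cutoff equation \eqref{eq:cneuNS}, and removal of the cutoff for $M$ large via \autoref{prop:pdeapriori}. There is, however, a genuine gap in how you set up the Skorokhod step. You represent only the laws $\L_n$ of $\omega^n$ and then propose to pass to the limit in \eqref{eq:itoS} ``satisfied by the (lifted) empirical measures $\tilde S^n$''. But \eqref{eq:itoS} is an almost-sure identity containing stochastic integrals against the Brownian motions $B^i$ (and it involves the particle positions through $S^n$); it is not a pathwise functional of $\omega^n$ alone, so it does not transfer to the auxiliary space if you only represent $\omega^n$. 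You must either apply Skorokhod to the \emph{joint} law of $(\omega^n,(B^i)_i)$ on a product space, check that the representing Brownian motions remain Brownian with respect to the filtration generated by the new pair, and argue (as the paper does, with Bensoussan-type references) that the weak formulation \eqref{eq:weakformulation} still holds for the new variables; or you must recast the argument so that only measurable functionals of $\omega^n$ appear, e.g.\ by bounding in $L^2(\PP)$ the ``defect'' of the deterministic part of \eqref{eq:weakformulation} before transferring laws --- which is what your BDG estimate really provides. As written, the phrase ``the martingale term vanishes in $L^2(\tilde\PP)$'' presupposes exactly the joint representation you skipped.

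A smaller point: when you replace $d\tilde S^n_s$ by $\tilde\omega^n_s\,dx$ in the nonlinear term, uniform boundedness of the total mass $\sum_i\omega^n_i$ is not sufficient; the error $\int_D(\psi^n_s-P_\epsilon\psi^n_s)\,d\tilde S^n_s$ with $\psi^n_s=F(K[\tilde\omega^n_s])\cdot\nabla\phi$ is small only if $\psi^n_s$ is equicontinuous uniformly in $n$ and $s$. This is where the paper uses the mapping property $K:\HH^{\eta,p}\to H^{\eta+1,p}(D)\hookrightarrow C^1(\bar D)$ from \autoref{lem:biotsavart} together with the a.s.\ bound $\sup_n\norm{\omega^n}_{\DD([0,1];\HH^{\eta,p})}<\infty$, yielding a uniform $C^1(\bar D)$ control on $K[\omega^n_t]$ that is then combined with the heat-kernel estimate \eqref{eq:kernelestimatePW}; your $C(\bar D)$-convergence of $K[\tilde\omega^n]$ does not by itself control this term. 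The remaining steps (initial datum, boundary generation, vanishing of the martingale term, and cutoff removal for $M\geq M_0(\omega_0,g)$) match the paper's proof.
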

\noindent
These two Lemmas imply \autoref{thm:main}.

\begin{proof}[Proof of \autoref{thm:main}]
By \autoref{lem:tightness} and \autoref{lem:limitingdynamics}, every subsequence $\L_{n(k)}$ admits a sub-subsequence which converges to the unique limit point $\delta_\omega$, where $\omega$ is the deterministic solution of \eqref{eq:neuNS}. Then, for example by \cite[Theorem 2.6]{Bil1999}, the whole sequence $\L_{n}$ converges weakly to $\delta_\omega$, and then also in probability as the limit point is a Dirac delta (see e.g. the argument in \cite[page 27]{Bil1999}). The proof is complete.
\end{proof}

\subsection{Proof of \autoref{lem:tightness}}
Thanks to the uniform estimates of \autoref{sec:uniformbounds},
we can straightforwardly apply Aldous' criterion, \autoref{prop:aldous}, to the processes $\omega^n_t$ on $\DD([0,T];\HH^{\eta,p})$.

Condition (1), that is tightness of laws at fixed time in $\HH^{\eta,p}$, 
actually holds for \emph{all} $t\in [0,1]$.
This is a trivial consequence of \autoref{prop:unifspacebound},
Markov's inequality and the fact that $\HH^{\eta',p}\hookrightarrow \HH^{\eta,p}$
with $\eta'>\eta$ is a compact embedding.

Condition (2) of \autoref{prop:aldous} is the harder one.
Let us show that \eqref{eq:aldouscondition} holds: given a sequence $(\tau_n)_n$ of
$\F$-stopping times we estimate, for $q>2$,
\begin{multline*}
    \prob{\norm{\omega^n_{(\tau_n+r)\wedge 1}-\omega^n_{\tau_n}}_{\HH^{\eta,p}}>\delta}
    \leq \delta^{-q} \expt{\norm{\omega^n_{(\tau_n+r)\wedge 1}-\omega^n_{\tau_n}}_{\HH^{\eta,p}}^q}\\
    \lesssim_{\alpha,q} \delta^{-q} \expt{\norm{\omega^n_{(\tau_n+r)\wedge 1}-\omega^n_{\tau_n}}_{\HH^{-2,p}}^{\frac{q(\alpha-\eta)}{2+\alpha}}\cdot \sup_{t\in [0,1]}\norm{\omega^n_t}^{\frac{q(2+\eta)}{2+\alpha}}_{\HH^{\alpha,p}}}\\
    \lesssim_{\alpha,q} \delta^{-q} \expt{\norm{\omega^n_{(\tau_n+r)\wedge 1}-\omega^n_{\tau_n}}_{\HH^{-2,p}}^{\frac{uq(\alpha-\eta)}{2+\alpha}}} 
    \expt{\sup_{t\in [0,1]}\norm{\omega^n_t}^{\frac{u'q(2+\eta)}{2+\alpha}}_{\HH^{\alpha,p}}},
\end{multline*}
where the second step is the interpolation inequality between $\HH^{-2,p}$
and $\HH^{\alpha,p}$, and the third is H\"older inequality with exponents
$1<u,u'<\infty$. 
A simple computation reveals that, since $q>2$, one can choose $u$ so that the exponents
of the norms in the expected values are strictly larger than $2$, therefore
we can now apply the uniform estimates of
\autoref{prop:unifspacebound} and \autoref{prop:uniftimebound},
obtaining
\begin{equation*}
    \prob{\norm{\omega^n_{(\tau_n+r)\wedge 1}-\omega^n_{\tau_n}}_{\HH^{\eta,p}}>\delta}
    \lesssim_{\nu,M,\alpha,p,q} \delta^{-q} 
    \pa{r^{\frac{uq(\alpha-\eta)}{2(2+\alpha)}}+n^{-\frac{uq(\alpha-\eta)}{2+\alpha}}}.
\end{equation*}
It is now possible, given $\delta>0$, to choose $r_0>0$ small enough
and $n_0$ large enough (taking then $r\leq r_0$ and $n\geq n_0$) so that the right-hand side
of the latter inequality is arbitrarily small, thus satisfying Aldous' condition
and implying tightness of $(\L_n)_{n\in\N}$.

\subsection{Proof of \autoref{lem:limitingdynamics}}\label{sec:limit}
We consider a weak limit point $\L$ in $\DD([0,T];\HH^{\eta,p})$;
for simplicity, we still denote by $\L_n$ the convergent subsequence.

Consider the joint law of $\omega^n$ and the sequence of Brownian motions
$(B^i)_{i\in\N_0}$ on the product space $(\R^2)^\N\times \DD([0,T];\HH^{\eta,p})$
(which we regard as a product metric space, considering a distance on
$(\R^2)^\N$ that makes it separable).
In particular, the relation between Brownian motions and $\omega^n$
is encoded in the weak formulation of the dynamics:
\begin{align} \label{eq:weakformulation}
        \langle\omega^n_t,\phi\rangle-\langle\omega^n_0,\phi\rangle 
            &= \sum_{i\in A^n_0(t)}\omega_i^n \langle p_\epsilon(\cdot,\zeta_i^n),\phi\rangle
            +\nu \int_0^t \langle \omega^n_s,\Delta\phi\rangle ds\\ \nonumber
			&\quad +\int_0^t \int_D F\pa{K[\omega^n_s](y)}\cdot\langle\nabla_y p_\epsilon(\cdot,y),\phi\rangle dS^n_s(y)ds\\ \nonumber
			&\quad  +\sqrt{2\nu} \int_{0}^t \sum_{i\in A^n(s)}\omega_i^n \langle\nabla_y p_\epsilon(\cdot,x_i^n(s)),\phi\rangle \cdot dB^i_s.
\end{align}
$\PP$-a.s. for each $t\in [0,1]$ and $\phi\in \HH^{s+2,p'}$ with $sp'>2$ (and $ 1<p'< 2 $ since $p>2$).

By Skorohod coupling theorem on $(\R^2)^\N\times \DD([0,T];\HH^{\eta,p})$, 
\cite[Theorem 3.30]{Kallenberg2002},
there exists a complete probability space $(\tilde\Omega,\mathcal{A},\tilde\PP)$ with random elements
$(\tilde\omega_n)_{n\in\N}$, $(\tilde B^{i,n})_{i\in \N}$ having the same
joint laws of $(\omega_n)_{n\in\N}$, $(B^i)_{i\in \N}$,
and $\tilde \omega$ with law $\tilde \L=\L$, such that
$\tilde \omega_n$ converges $\tilde\PP$-almost surely to $\tilde\omega$. Moreover we can take the filtration $\tilde\F^{n,0}=(\tilde\F^{n,0}_t)_t$ generated by $(\tilde\omega^n,(\tilde{B}^{i,n})_i)$ and the $\PP$-null sets on $(\Omega,\mathcal{A})$ and we define the filtration $\tilde{\F}^n$ by $\tilde{\F}^n_t=\cap_{s>t}\tilde{\F}^{n,0}_t$; by a standard argument, see e.g. the proof of \cite[Proposition 2.5, part 1]{Bas2011}, we get that ($\tilde\omega^n$ and $(\tilde{B}^{i,n})_i$ are progressively measurable with respect to $\tilde{\F}^n$ and) $(\tilde{B}^{i,n})_i$ is still a cylindrical Brownian motion with respect to $\tilde{\F}^n$. Arguing for example as in \cite[Section 4.3.4]{Ben1995} (see also \cite[Proposition 4.1]{DebGlaTem2011} or \cite[Section 2.4.4]{flaWas}),
we can show that \eqref{eq:weakformulation} is satisfied by $\tilde \omega_n,(\tilde B^{i,n})_{i=1,\dots,N}$ on the filtered probability space $(\tilde{\Omega},\tilde{\F}^n,\PP)$.

If we now show that $\tilde \L=\delta_\omega$, so that also $\L=\delta_\omega$,
we obtain the first statement of the Lemma. In proving such claim,
we drop tilde signs from $\tilde{\omega}^n,(\tilde{B}^{i,n})_i$ to lighten notation\footnote{The tilde on our limit point $\tilde{\omega}$ is retained since a priori it can differ from the solution $\omega$ of \eqref{eq:neuNS}.}.
The aim is to pass to the limit \eqref{eq:weakformulation},
we do so term by term.

\subsubsection{Linear Terms} It is clear that, $\PP$-almost surely,
\begin{equation*}
    \brak{\omega^n_t,\phi}\to\brak{\tilde\omega_t,\phi},\quad 
    \nu\int_0^t\langle \omega^n_s,\Delta \phi\rangle ds\rightarrow \nu\int_0^t\langle \tilde \omega_s,\Delta \phi\rangle ds. 
\end{equation*}
We then need to prove that 
$\langle \omega_0^n,\phi\rangle\to \langle \omega_0,\phi  \rangle $,
that is, $\tilde\omega_0=\omega_0$; we do so
by showing that as $n\to\infty$, the following quantities vanish:
\begin{align*}
    I_1&:=\left\lvert \langle \omega_0^n,\phi\rangle-\int_D dx\int_D dy p_{\epsilon}(x,y)\omega_0(y)\phi(x) \right\rvert,\\ 
    I_2&:=\left\lvert \langle \omega_0,\phi  \rangle-\int_D dx\int_D dy p_{\epsilon}(x,y)\omega_0(y)\phi(x) \right\rvert.
\end{align*}
We treat $I_1$ exploiting the fact that $\phi$ is Lipschitz continuous,
\begin{multline*}
    I_1
    =\abs{\sum_{i:t_i=0}\omega_i^n\langle p_{\epsilon}(\cdot,\zeta_i^n),\phi\rangle-\int_D dx\int_D dy p_{\epsilon}(x,y)\omega_0(y)\phi(x)}\\
    =\abs{\sum_{i:t_i=0}\omega_i^n P_{\epsilon}\phi(\zeta_i^n)-\langle \omega_0,P_{\epsilon}\phi\rangle}
    \leq \sum_{i:t_i=0}\int_{Q_i^n}  \omega_0(y)\lvert P_{\epsilon}\phi(\zeta_i^n)-P_{\epsilon}\phi(y) \rvert dy\\
    \leq \sum_{i:t_i=0}\int_{Q_i^n}  \omega_0(y)\lvert y-\zeta_i^n\rvert \lVert P_{\epsilon}\phi \rVert_{C^1(\bar{D})} dy
    \lesssim \frac1n \norm{\phi}_{\HH^{s+2,p'}}\downarrow 0.
\end{multline*}
As for $I_2$, using the regularity of $\omega_0$ we assumed by hypothesis,
\begin{equation*}
    I_2=\left\lvert \langle \omega_0,\phi  \rangle- \langle P_{\epsilon}\omega_0,\phi \rangle\right\rvert \leq \lVert \phi\rVert_{L^2}\lVert (I-P_{\epsilon})\omega_0\rVert_{L^2}\downarrow 0.
\end{equation*}

\subsubsection{Boundary Generation Terms}
They converge to the PDE Neumann source,
conveniently represented by $\int_0^t\langle g_s,\phi\rangle_{\partial D} ds$.
The argument is analogous to parts of the proofs of
\autoref{lem:boundarygeneration} and \autoref{prop:uniftimebound}, 
therefore we omit some details. We split:
\begin{multline*}
    \left\lvert\sum_{i\in A^n_0(t)}\omega_i^n \langle p_\epsilon(\cdot,\zeta_i^n),\phi\rangle-\int_0^t\langle g_s,\phi\rangle_{\partial D} ds\right\rvert\\
    \leq \left\lvert\sum_{i\in A^n_0(t)}\omega_i^n \langle p_\epsilon(\cdot,\zeta_i^n),\phi\rangle-\int_0^t\int_{\partial D}g_s(y)\langle p_{\epsilon}(\cdot,y),\phi\rangle dy ds\right\rvert\\ 
    + \left\lvert\int_0^t\langle g_s,\phi\rangle_{\partial D} ds-\int_0^t\int_{\partial D}g_s(y)\langle p_{\epsilon}(\cdot,y),\phi\rangle dy ds\right\rvert =J_1+J_2.
\end{multline*}
Let us define $I^n(t)=(t_j^n-1/(2n),t_j^n+1/2n]$ with $t_j^n$ such that $t\in I^n(t)$.
We control $J_1$ exploiting again the fact that $\phi$ is Lipschitz,
\begin{align*}
    J_1&=\abs{\sum_{i\in A^n_0(t)}\omega_i^n P_{\epsilon}\phi(\zeta_i^n)-\int_0^t\int_{\partial D}g_s(y)P_{\epsilon}\phi(y) dy ds}\\ 
    &\leq \sum_{i\in A^n_0(t)}\int_{Q_i^n} g_s(y)\lvert P_{\epsilon}\phi(\zeta_i^n)-P_{\epsilon}\phi(y) \rvert ds dy
    +\int_{I^n(t)}\int_{\partial D}g_s(y)\lvert P_{\epsilon}\phi(y)\rvert dy ds\\ 
    &\lesssim \sum_{i\in A^n_0(t)}\int_{Q_i^n}g_s(y)\lvert \zeta_{i}^n-y\rvert\lVert P_{\epsilon}\phi\rVert_{C^1(\bar{D})}ds dy
    +\frac{1}{n}\norm{g}_{L^{\infty}([0,1]\times \partial D)}\norm{P_\epsilon\phi}_{C(\bar{D})}\\ 
    &\lesssim_{s,p} \frac{1}{n}\lVert \phi\rVert_{\HH^{s+2,p'}}\downarrow 0.
\end{align*}
Once again, the second term $J_2$ is the easier one:
\begin{multline*}
    J_2 \leq \|g\|_{L^1([0,1];L^\infty(\partial D))}\norm{(I-P_\epsilon)\phi}_{C(\bar{D})}\\
    \leq \|g\|_{L^1([0,1];L^\infty(\partial D))}\norm{(I-P_\epsilon)(I+A_{p'})\phi}_{L^{p'}(D)}\downarrow 0.
\end{multline*}

\subsubsection{Martingale Term} By It\^o isometry, 
\begin{multline*}
    \expt{\abs{\int_{0}^t \sum_{i\in A^n(s)}\omega_i^n \langle\nabla_y p_\epsilon(\cdot,x_i^n(s)),\phi\rangle \cdot dB^{i,n}_s}^2}\\
    =\expt{\sum_{i\in A^n(t)}\left(\omega^{n}_i\right)^2\int_{t_i^n}^t \lvert \nabla P_{\epsilon}\phi(x_i^n(s))\rvert^2 ds}\\
    \lesssim \frac{1}{n^2}\sup_{x\in\overline{ D}}\lvert \nabla P_{\epsilon}\phi(x)\rvert 
    \lesssim_{s,p} \frac{1}{n^2}\lVert \phi\rVert_{\HH^{s+2,p'}}\downarrow 0.
\end{multline*}

\subsubsection{Nonlinear Term}
Observe first that, since we are assuming that $\omega^n\to\tilde \omega$, $\PP$-almost surely in $\DD([0,1];\HH^{\eta,p})$ for all $\frac{2}{p}<\eta$, by \autoref{lem:biotsavart}
we also have that $K[\omega^n]\to K[\tilde \omega]$, $\PP$-almost surely
in $\DD([0,1];C^1(\bar{D};\R^2))$.
We now proceed to show that
\begin{multline}\label{eq:nlpasstolimit}
    L\coloneqq \left\lvert \int_0^t \int_D F\pa{K[\omega^n_s](y)}\nabla P_{\epsilon}\phi(y) dS^n_s(y)ds \right.\\
    \left.-\int_0^t \int_D F(K[\tilde \omega_s](y))\nabla \phi (y) \tilde \omega_s(y) dyds
    \right\rvert \xrightarrow{\PP-a.s.}0.
\end{multline}
To do so, we add and subtract the same quantity three times,
apply the triangular inequality and estimate differences.
For the sake of shorter formulas and clearer passages we will write
\begin{gather*}
    \mathbf{K}^n_t(x)= F(K[\omega^n_t](x)), \quad \mathbf{K}_t(x)=F(K[\tilde \omega_t](x)), \quad x\in \bar D;\\
    T_1=\int_0^t \int_D \mathbf{K}^n_t(y)\nabla P_{\epsilon}\phi(y) dS^n_s(y)ds, \quad
    T_2= \int_0^t \int_D \mathbf{K}^n_t(y)\nabla P_{\epsilon}\phi(y) \omega^n_s(y)dyds,\\
    T_3=\int_0^t \int_D \mathbf{K}^n_t(y)\nabla P_{\epsilon}\phi(y) \tilde \omega_s(y)dyds,\quad
    T_4=\int_0^t \int_D \mathbf{K}_t(y)\nabla P_{\epsilon}\phi(y) \tilde\omega_s(y)dyds,\\
    T_5=\int_0^t \int_D \mathbf{K}_t(y)\nabla \phi(y) \tilde \omega_s(y)dyds.
\end{gather*}
Notice that $T_1$ and $T_5$ coincide with the terms of the difference
in \eqref{eq:nlpasstolimit}, therefore the latter vanishes if
$|T_1-T_2|,|T_2-T_3|,|T_3-T_4|,|T_4-T_5|$ do so.
The first difference is the harder one, 
since it involves the pointwise difference between
the empirical measure and its regularization:
\begin{align*}
    |T_1-T_2|
    &=\left\lvert \int_0^t \sum_{i\in A^n(s)} \omega^n_i \mathbf{K}^n_t(x_i^n(s))\nabla P_{\epsilon}\phi(x_i^n(s)) ds\right.\\ 
    &\qquad\left.- \int_0^t \sum_{i\in A^n(s)}\omega^n_i\int_D \mathbf{K}^n_t(z)\nabla P_{\epsilon}\phi(z) p_{\epsilon}(z,x_i^n(s))dzds\right\rvert\\ 
    &\leq (\norm{\omega_0}_{L^1(D)}+\norm{g}_{L^1([0,1]\times\partial D)})\cdot\\
    &\quad \cdot \sup_{\substack{t\in [0,1],\\ y\in \bar{D}}}
    \int_D \abs{\mathbf{K}^n_t(y)\nabla P_{\epsilon}\phi(y)-\mathbf{K}^n_t(z)\nabla P_{\epsilon}\phi(z)} p_{\epsilon}(z,y) dz,\\
    & \lesssim \sup_{\substack{t\in [0,1],\\ y\in \bar{D}}}
    \int_D \left\lvert \mathbf{K}^n_t(y)\nabla P_{\epsilon}\phi(y)-\mathbf{K}^n_t(z)\nabla P_{\epsilon}\phi(y)\right\rvert p_{\epsilon}(z,y) dz\\ 
    &\qquad+\sup_{\substack{t\in [0,1],\\ y\in \bar{D}}}
    \int_D \left\lvert \mathbf{K}^n_t(z)\nabla P_{\epsilon}\phi(y)-\mathbf{K}^n_t(z)\nabla P_{\epsilon}\phi(z)\right\rvert p_{\epsilon}(z,y) dz.
\end{align*}
From there, since $F$ is Lipschitz continuous, and applying the pointwise estimate on heat kernel \eqref{eq:kernelestimatePW},
\begin{align*}
    |T_1-T_2|
    & \lesssim  \sup_{\substack{t\in [0,1],\\ y\in \bar{D}}}
    \int_D \norm{\nabla P_{\epsilon}\phi}_{C(\bar{D})}\lvert y-z\rvert \norm{K[\omega^n_t]}_{C^{1}(\bar{D})}\frac{1}{\epsilon}e^{\lvert z-y\rvert^2/(c\epsilon)} dz\\ 
    & \qquad +\sup_{\substack{t\in [0,1],\\ y\in \bar{D}}}
    \int_D \norm{\nabla P_{\epsilon}\phi}_{C^1(\bar{D})}\lvert y-z\rvert \norm{K[\omega^n_t]}_{C(\bar{D})}\frac{1}{\epsilon}e^{\lvert z-y\rvert^2/(c\epsilon)} dz\\ 
    & \lesssim_{s,p} \sup_{\substack{t\in [0,1],\\ y\in \bar{D}}}
    \int_D \norm{\phi}_{\HH^{s+2,p'}}\lvert y-z\rvert \norm{K[\omega^n_t]}_{C^{1}(\bar{D})}\frac{1}{\epsilon}e^{\lvert z-y\rvert^2/(c\epsilon)} dz\\ 
    & \leq \norm{\phi}_{\HH^{s+2,p'}}\sup_{y\in \bar{D},n\in\N}\norm{\omega^n}_{\mathbb{D}([0,1];\HH^{\eta,p})} \int_D \lvert y-z\rvert \frac{1}{\epsilon}e^{\lvert z-y\rvert^2/(c\epsilon)} dz
    \xrightarrow{\PP-a.s.}0.
\end{align*}
The other differences now are controlled by the almost sure convergence assumption
and functional analytic estimates already repeatedly applied. Bounding
\begin{align*}
    |T_2-T_3|
    & \leq M\int_0^t \int_D \norm{\nabla P_{\epsilon}\phi}_{C(\bar{D})}\lvert \omega^n_s(y)-\tilde \omega_s(y)\rvert dyds \\ 
    & \lesssim_{s,p,M} \norm{\phi}_{\HH^{s+2,p'}}\norm{\omega^n-\tilde\omega}_{\DD([0,1];\HH^{\eta,p})},\\
    |T_3-T_4| 
    &\lesssim_M \int_0^t \int_D \norm{\nabla P_{\epsilon}\phi}_{C(\bar{D})}\norm{\tilde \omega_s}_{C(\bar{D})}\lvert K[\omega_s^n](y)-K[\tilde \omega_s](y)\rvert dyds\\ 
    & \lesssim_{\eta,p}
     \norm{\nabla P_{\epsilon}\phi}_{C(\bar{D})}\norm{\tilde \omega}_{\DD([0,1];\HH^{\eta,p})}\lVert K\rVert_{H^{\eta,p}\rightarrow H^{\eta+1,p}}\norm{\omega^n-\tilde \omega}_{\DD([0,1];\HH^{\eta,p})},\\ 
    |T_4-T_5|
    & \le M\norm{\tilde \omega}_{\DD([0,1];C(\bar{D}))}\norm{\nabla(I-P_{\epsilon})\phi}_{C(\bar{D})}\\ 
    & \lesssim_{\eta,p,M}
     \norm{\tilde \omega}_{\DD([0,1];\HH^{\eta,p})}\norm{(I-P_{\epsilon})\phi}_{\HH^{s+2,p'}}\\ 
    & =M\norm{\tilde \omega}_{\DD([0,1];\HH^{\eta,p})}\norm{(I-P_{\epsilon})(I+A_{p'})^{(s+2)/2}\phi}_{L^{p'}},
\end{align*}
it is clear that differences on the right-hand side converge $\PP$-almost
surely to $0$ as $n\to\infty$.

\subsubsection{Removing the cutoff}
The argument detailed so far implies that the limit
$\tilde \omega$ is a weak solution, thus \emph{the unique} weak solution
of the cutoff PDE \eqref{eq:cneuNS}.
However, by \autoref{prop:pdeapriori} (second statement) it is now possible
to choose $M$ large enough so that $\norm{K[\tilde \omega]}_{L^\infty([0,1], L^\infty)}<M$, so that $F(K[\tilde \omega])=K[\tilde \omega]$ and thus
$\tilde \omega=\omega$ is actually the unique weak solution of the PDE \eqref{eq:neuNS}
without cutoff.
This concludes the proof of the first part of \autoref{lem:limitingdynamics}.

\section{General Initial and Boundary data}\label{sec:generalid}
In this section we provide a brief description of changes to the above arguments
required to deal with general boundary data $\omega_0\in \lip(\bar D) $ and $g\in \B_b([0,1]\times \de D)$, dropping the non-negativity assumption. 
The idea is simply to divide boundary data into positive and negative parts,
and to consider particle and PDE dynamics for the two parts,
properly taking into account their interactions.

\subsection{Splitting of PDE and particle dynamics}
Let us set
\begin{equation*}
    \omega_{0}^+:=\omega_0\lor 0,\quad \omega_{0}^-:=(-\omega_0)\lor 0,\quad g_{t}^+(x):=g_t(x)\lor 0,\quad g_{t}^-(x):=(-g_t(x))\lor 0. 
\end{equation*}
The limit dynamics \eqref{eq:neuNS} is then equivalent to a coupled system of PDEs,
\begin{equation}\label{eq:coupledsystem}
            \begin{cases}
            \de_t \omega^++K[\omega^+-\omega^-]\cdot \nabla \omega^+ =\nu \Delta \omega^+,\quad \text{in }D\times [0,T],\\
            \de_t \omega^-+K[\omega^+-\omega^-]\cdot \nabla \omega^- =\nu \Delta \omega^-,\quad \text{in }D\times [0,T],\\
            \nabla\omega^+\cdot \hat n=g^+,\quad \nabla\omega^-\cdot \hat n=g^-, \quad \quad \text{in }\de D\times (0,T), \\
            \omega^+(0)=\omega_0^+,\quad \omega^-(0)=\omega_0^-,\quad \text{in }D.
        \end{cases}
\end{equation}
The notion of weak solution to the latter system is completely analogous to the one of \autoref{def:weaksol}. Moreover, a couple $(\omega^+,\omega^-)$ is a weak solution of the system above if and only if the couple $(\omega=\omega^+-\omega^-,\omega^+)$ is a weak solution of 
\begin{equation}\label{eq:auxcoupledsystem}
            \begin{cases}
            \de_t \omega+K[\omega]\cdot \nabla \omega =\nu \Delta \omega,\quad \text{in }D\times [0,T],\\
            \de_t \omega^++K[\omega]\cdot \nabla \omega^+ =\nu \Delta \omega^+,\quad \text{in }D\times [0,T],\\
            \nabla\omega\cdot \hat n=g,\quad \nabla\omega^+\cdot \hat n=g^+, \quad \quad \text{in }\de D\times (0,T), \\
            \omega(0)=\omega_0,\quad \omega^+(0)=\omega_0^+,\quad \text{in }D.
        \end{cases}
\end{equation}
Well-posedness of \eqref{eq:auxcoupledsystem} follows easily from \autoref{prop:H-1method}. Therefore, also \eqref{eq:coupledsystem} is well posed in $\DD([0,T];L^2(D)^2)$.\\ 
As for the approximating dynamics, coherently with notation of \autoref{ssec:grid}, we define
\begin{equation}
    \omega_i^{n,\pm}=
    \begin{cases}
    \int_{Q_i^n}\omega_0^\pm(x)dx &t_i^n=0,\\
    \int_{Q_i^n} g_s^\pm(x) dsd\sigma(x) &t_i^n>0.
    \end{cases}
\end{equation}
From our assumptions on the boundary data and on the mesh of the grid,
it follows that $\omega_i^{n,\pm}\lesssim \frac{1}{n^2}$ uniformly in $i=1,\dots,N$,
for all $n$.
We introduce two mutually independent sequences of independent $\F$-Brownian motions $(B^{i,\pm})_{i\in\N}$ and particles with positions $x_i^{n,\pm}(t)$ satisfying the following evolution equations: for $t>t_i^n$,
    \begin{align*}
        x_i^{n,\pm}(t)-\zeta_i^n
        &=\int_{t_i^n}^t F \Bigg( 
        \sum_{j\in A^n(s)} \omega_j^{n,+} K_n(x_i^{n,\pm}(s),x_j^{n,+}(s))\\
        &\qquad -\omega_j^{n,-} K_n(x_i^{n,\pm}(s),x_j^{n,-}(s))
        \Bigg)ds
        + \sqrt{2\nu}\int_{t_i^n}^t dB^{i,\pm}_s- k_i^{n,\pm}(t),\\
        k_i^{n,\pm}(t)
        &=\int_{t_i^n}^t\hat n(x_i^{n,\pm}(s))d|k_i^{n,\pm}|(s), 
        \quad |k_i^{n,\pm}|(t)=\int_{t_i^n}^t\one_{x_i^{n,\pm}(s)\in \de D}d|k_i^{n,\pm}|(s).
    \end{align*}
where $k^{n,\pm}_i$ are continuous, adapted, $\R^2$-valued processes with bounded variation trajectories, whereas for $t\leq t_i^n$, $x_i^{n,\pm}(t)=\zeta_i^n$
and $k_i^{n,\pm}(t)=0$.
Well-posedness of this particle system follows from the same arguments proving \autoref{prop:wellposSDE}. Similarly to \autoref{sec:uniformbounds} we introduce the empirical measures 
\begin{equation*}
    S^{n,\pm}_t= \sum_{i\in A^n(t)}\omega_i^{n,\pm}\delta_{x_i^{n,\pm}(t)},
\end{equation*}
and the regularized empirical measures 
\begin{equation*}
    \omega^{n.\pm}_t(x)=P_\epsilon S^{n,\pm}_t= 
    \sum_{i\in A^n(t)}\omega_i^{n,\pm} p_\epsilon(x,x_i^{n,\pm}(t)),\quad x\in \bar D.
\end{equation*}
The following statement extends \autoref{thm:main} to the general case of
$\omega_0$, $g$ not necessarily non-negative.
\begin{theorem}\label{thm:mainsign}
Let $p>2,\ \frac{2}{p}<\alpha<1,\ \epsilon=\epsilon(n)\gtrsim n^{-1/2}$,
and $\omega_0\in \lip(\bar D) $, $g\in \B_b([0,1]\times \de D)$.
There exists $M>0$ (only depending on $\omega_0$, $g$) such that
for every $\eta\in (2/p,\alpha)$, as $n\to\infty$,
the stochastic process $(\omega^{n,+},\omega^{n,-})$ converges in probability on $\DD([0,T];\HH^{\eta,p}\times \HH^{\eta,p})$ to the unique weak solution of \eqref{eq:coupledsystem},
therefore $\omega^{n,+}-\omega^{n,-}$ converges in the same topology to the unique solution of \eqref{eq:neuNS}.
\end{theorem}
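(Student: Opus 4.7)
The plan is to mirror the proof of \autoref{thm:main}, now working with the pair $(\omega^{n,+},\omega^{n,-})$ as a single process on the product space $\DD([0,T];\HH^{\eta,p}\times\HH^{\eta,p})$. The essential observation enabling the transfer is that, although the two families of particles are coupled through the velocity field $F(K[\omega^{n,+}-\omega^{n,-}])$, the cutoff $F$ ensures each individual particle still feels a velocity bounded in norm by $M$. Consequently, the Duhamel formulation \eqref{eq:duhamelformula} holds separately for each component $\omega^{n,\pm}$, driven by its own Brownian family and with initial/boundary creation governed by the bounded, respectively Lipschitz, data $\omega_0^\pm$ and $g^\pm$.

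First I would reproduce the uniform bounds of \autoref{sec:uniformbounds} componentwise. \autoref{lem:initialdatum}, \autoref{lem:boundarygeneration} and \autoref{lem:martingaleterm} transfer verbatim to each $\omega^{n,\pm}$, since their inputs are only the Lipschitz bound on $\omega_0^\pm$, the $L^\infty$ bound on $g^\pm$, and the weight estimate $\omega^{n,\pm}_i\lesssim n^{-2}$. For the analogue of \autoref{prop:unifspacebound} the nonlinear term evaluates $F(K[\omega^{n,+}_s-\omega^{n,-}_s])$ at the particle positions, and since $|F|\le M$ pointwise, the gradient estimate \eqref{eq:gradientestimateBASE} yields the very same integral inequality driving Gr\"onwall, uniformly for the $+$ and $-$ components. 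The time increment estimates of \autoref{prop:uniftimebound} transfer in the same way. By \autoref{prop:aldous} applied to each marginal we obtain tightness on $\DD([0,T];\HH^{\eta,p})$ for each component, hence tightness of the joint law on the product space.

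To identify the limit I would repeat the Skorohod-based argument of \autoref{sec:limit} on the product space, extracting almost sure limits $(\tilde\omega^+,\tilde\omega^-)$ together with the two independent Brownian families $(\tilde B^{i,\pm})_i$ on a new probability space. Passing to the limit in the weak formulation for each sign produces exactly the coupled system \eqref{eq:coupledsystem} with velocity $K[\tilde\omega^+-\tilde\omega^-]$; the only nontrivial new check is in the nonlinear term, where one writes $K[\omega^{n,+}-\omega^{n,-}]=K[\omega^{n,+}]-K[\omega^{n,-}]$ by linearity of \autoref{lem:biotsavart} and applies the same five-step splitting from \autoref{sec:limit} to each piece. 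Well-posedness of \eqref{eq:coupledsystem} in $\DD([0,T];L^2(D)^2)$, given via the equivalent system \eqref{eq:auxcoupledsystem} and \autoref{prop:H-1method}, then pins down the limit as a Dirac mass, upgrading weak convergence of the whole sequence to convergence in probability exactly as in \autoref{thm:main}.

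The only genuinely new point is the choice of $M$, which I expect to be the main place where care is needed: the a priori bound of \autoref{prop:pdeapriori} must be invoked for $\omega=\omega^+-\omega^-$, which solves \eqref{eq:neuNS} with data $(\omega_0,g)$, in order to pick $M\ge M_0(\omega_0,g)$ so that $\norm{K[\omega^+-\omega^-]}_{L^\infty([0,T]\times D)}<M$ and the cutoff becomes inactive along the limit. Note in particular that one cannot bound $\omega^\pm$ separately by an a priori estimate of \eqref{eq:coupledsystem} (since the velocity is driven by the difference, not the single components); however it is only the velocity $K[\omega^+-\omega^-]$ that enters the dynamics and must satisfy $|F|=|\text{id}|$, so \autoref{prop:pdeapriori} is exactly what is needed. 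With this choice, the limiting pair solves the uncut coupled system \eqref{eq:coupledsystem}, and $\omega^{n,+}-\omega^{n,-}$ converges in $\DD([0,T];\HH^{\eta,p})$ to the unique weak solution of \eqref{eq:neuNS} by continuity of the difference map.
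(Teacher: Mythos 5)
Your proposal follows essentially the same route as the paper: componentwise uniform bounds (the generation, martingale and linear estimates transfer verbatim, the nonlinear term is handled through $|F|\le M$ and the gradient estimate exactly as in \autoref{prop:unifspacebound}), Skorohod extraction, passage to the limit in the weak formulation for each sign, identification via the equivalence of \eqref{eq:coupledsystem} with \eqref{eq:auxcoupledsystem}, and removal of the cutoff through \autoref{prop:pdeapriori} applied to the difference $\omega^+-\omega^-$ (your remark that only the velocity $K[\omega^+-\omega^-]$, not the single components, needs the a priori control is precisely the point; just note that the limit a priori solves the \emph{cutoff} coupled system, and it is the second statement of \autoref{prop:pdeapriori} that lets you choose $M$ so that the cutoff is inactive).

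One step is stated too quickly: tightness of the two marginal laws on $\DD([0,T];\HH^{\eta,p})$ does \emph{not} by itself give tightness of the joint law on $\DD([0,T];\HH^{\eta,p}\times\HH^{\eta,p})$, because the Skorohod topology on product-valued paths is strictly finer than the product of the Skorohod topologies: a single time change must align the jump times of both components simultaneously (e.g.\ indicator paths jumping at $1/2$ and $1/2+1/n$ converge separately but not as a pair). The fix is immediate and stays within your estimates: apply Aldous' criterion, \autoref{prop:aldous}, directly to the pair $(\omega^{n,+},\omega^{n,-})$, with the product metric controlled by the sum of the componentwise norms, so that condition (1) follows from \eqref{eq:spaceboundcoupled} and condition (2) from \eqref{eq:timeboundcoupled} with the same interpolation argument as in \autoref{lem:tightness}; this is also how the paper proceeds.
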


\subsection{Uniform Estimates for split dynamics}

The regularized empirical measures satisfy the integral equations and the mild formulations below:
\begin{align} \label{eq:itoOmegasign}
        \omega^{n,\pm}_t(x)-\omega^{n,\pm}_0(x) 
            &= \sum_{i\in A^n_0(t)}\omega_i^{n,\pm} p_\epsilon(x,\zeta_i^n)
            +\nu \int_0^t \Delta \omega^{n,\pm}_s(x)ds\\ \nonumber
			&\quad +\int_0^t \int_D F\pa{K[\omega^{n,+}_s-\omega^{n,-}_s](y)}\nabla_y p_\epsilon(x,y)dS^{n,\pm}_s(y)ds\\ \nonumber
			&\quad  +\sqrt{2\nu} \int_{0}^t \sum_{i\in A^n(s)}\omega_i^{n,\pm} \nabla_y p_\epsilon(x,x_i^{n,\pm}(s))dB^{i,\pm}_s.
\end{align}
\begin{align} \label{eq:duhamelformulasign}
        \omega^{n,\pm}_t(x)
            &=P_t\omega^{n,\pm}_0(x) +\sum_{i\in A^n_0(t)}\omega_i^{n,\pm} P_{t-t_i^n} p_\epsilon(x,\zeta_i^n)\\ \nonumber
			&\quad +\int_0^t P_{t-s} \int_D F\pa{K[\omega^{n,+}_s-\omega^{n,-}_s](y)}\nabla_y p_\epsilon(x,y)dS^{n,\pm}_s(y)ds\\ \nonumber
			&\quad +\sqrt{2\nu} \int_{0}^t \sum_{i\in A^n(s)}\omega_i^{n,\pm} P_{t-s} \nabla_y p_\epsilon(x,x_i^{n,\pm}(s))dB^{i,\pm}_s.   
\end{align}

In order to obtain the required compactness we need to show that 	\begin{equation}\label{eq:spaceboundcoupled}
		\sup_{n}\expt{\sup_{t\in [0,1]}\norm{\omega^{n,+}_t}^q_{\HH^{\alpha,p}}}+\sup_{n}\expt{\sup_{t\in [0,1]}\norm{\omega^{n,-}_t}^q_{\HH^{\alpha,p}}}<C_{M,q,p,\nu,\alpha},
	\end{equation}
 \begin{gather} \label{eq:timeboundcoupled}
		\expt{\norm{\omega^{n,+}_{\tau_n+r}-\omega^{n,+}_{\tau_n}}^q_{\HH^{-2,p}}}+\expt{\norm{\omega^{n,-}_{\tau_n+r}-\omega^{n,-}_{\tau_n}}^q_{\HH^{-2,p}}}
		\lesssim_{M,q,p,\nu,\alpha} \left(r^{q/2}+\frac{1}{n^q}\right).
	\end{gather}
The proof of these inequalities follows the same strategy described in \autoref{sec:uniformbounds}.
Indeed, linear terms, stochastic integrals and generation terms can be treated exactly as in the case of positive data. In fact, nonlinear terms present no additional difficulties and the crucial estimate for the nonlinear term of \autoref{prop:unifspacebound} is easily adapted. For $f\in L^{p'}$ such that $\norm{f}_{L^{p'}}=1$ the following chain of inequalities holds:
   \begin{align*} 
       &\Bigg\| \sum_{i\in A^n(s)}\omega_i^{n,\pm} F\left(K[\omega^{n,+}_s-\omega^{n,-}_s](x_i^{n,\pm}(s))\right)\\
       &\qquad\qquad \cdot\int_D f(x)(I+A_p)^{\alpha/2}P_{t-s}
       \nabla_y p_\epsilon(x,x_i^{n,\pm}(s))dx\Bigg\|\\
       &\quad \lesssim_M\sum_{i\in A^n(s)}\omega_i^{n,\pm}  \abs{\nabla P_\epsilon[(I+A_{p'})^{\alpha/2}P_{t-s}f](x_i^{n,\pm}(s))}\\
       &\quad \lesssim \sum_{i\in A^n(s)}\omega_i^{n,\pm} P_\epsilon\abs{\nabla(I+A_{p'})^{\alpha/2}P_{t-s}f}(x_i^{n,\pm}(s))\\
       &\quad = \sum_{i\in A^n(s)}\omega_i^{n,\pm}  \int_D p_\epsilon(x,x_i^{n,\pm}(s))  \abs{\nabla(I+A_{p'})^{\alpha/2}P_{t-s}f}(x)dx\\
       &\quad = \int_D P_\epsilon(S^{n,\pm})(x)
      \abs{\nabla(I+A_{p'})^{\alpha/2}P_{t-s}f}(x)dx\\ 
      &\quad \leq  \norm{\omega^{n,\pm}_s}_{L^p} \norm{\nabla(I+A_{p'})^{\alpha/2}P_{t-s}f}_{L^{p'}}
      \lesssim \frac{1}{(t-s)^{(1+\alpha)/2}} \norm{\omega^{n,\pm}_s}_{\HH^{\alpha,p}}.
   \end{align*}
   Then the proof of \eqref{eq:spaceboundcoupled} follows exactly as in \autoref{prop:unifspacebound}.
   Similarly, in the proof of \eqref{eq:timeboundcoupled} 
   the only differences concern the nonlinear terms, and they can be handled
   with analogous simple modifications.

   Once uniform bounds are recovered, one can proceed to replicate the tightness argument of \autoref{sec:compactness}. 
   Passing to the limit dynamics is similar to that explained in \autoref{sec:limit},
   once again we only outline the (slightly) different treatment required by nonlinear terms.
   
   We consider the weak formulation satisfied by $\omega^{n,\pm}_t$, $t\in [0,1]$ and test functions $(\phi^{+},\phi^{-})\in \HH^{s+2,p'}\times \HH^{s+2,p'}$ with $sp'>2,\ 1<p'< 2 $,
   \begin{align} \label{eq:weakformulationcoupled}
        &\brak{\omega^{n,\pm}_t,\phi^{\pm}\rangle-\langle\omega^{n,\pm}_0,\phi^{\pm}}\\ \nonumber
        &\qquad= \sum_{i\in A^n_0(t)}\omega_i^{n,\pm} \langle p_\epsilon(\cdot,\zeta_i^n),\phi^{\pm}\rangle
            +\nu \int_0^t \langle \omega^{n,\pm}_s,\Delta\phi^{\pm}\rangle ds\\ \nonumber
		&\qquad +\int_0^t \int_D F\pa{K[\omega^{n,+}_s-\omega^{n,-}_s](y)}\cdot\langle\nabla_y p_\epsilon(\cdot,y),\phi^{\pm}\rangle dS^{n,\pm}_s(y)ds\\ \nonumber
		&\qquad  +\sqrt{2\nu} \int_{0}^t \sum_{i\in A^n(s)}\omega_i^{n,\pm} \langle\nabla_y p_\epsilon(\cdot,x_i^{n,\pm}(s)),\phi^{\pm}\rangle \cdot dB^{i,\pm}_s.
\end{align}
We stress once again that linear terms, stochastic integrals and generation terms present no changes with respect to \autoref{sec:limit}.
Since (by means of a Skorohod argument) we can assume that 
$\omega^{n,\pm}\stackrel{\PP-a.s.}{\rightarrow}\omega^{\pm}$ in $\DD([0,1];\HH^{\eta,p}) $ for all $\frac{2}{p}<\eta$,
and by the properties of the Biot-Savart Kernel, 
\begin{equation*}
    u^n:=K[\omega^{n,+}-\omega^{n,-}]\stackrel{\PP-a.s.}{\rightarrow}K[\omega^+-\omega^-]=:u\quad  \text{ in }\DD([0,1];C^1(\bar{D};\R^2)).
\end{equation*}
A careful analysis of computations in \autoref{sec:limit} reveals that only the terms $\lvert T_1^{\pm}-T_2^{\pm}\rvert,$ and $\lvert T_3^{\pm}-T_4^{\pm}\rvert$ are affected by the generalization to the coupled positive-negative system. 
However, only little changes are needed:  
\begin{align*}
    \lvert T_1^{\pm}-T_2^{\pm}\rvert
    &=\bigg\lvert \int_0^t \sum_{i\in A^n(s)} \omega^{n,\pm}_iF\pa{u^n_s(x_i^{n,\pm}(s))} \nabla P_{\epsilon}\phi^{\pm}(x_i^{n,\pm}(s)) ds\\ 
    &\qquad - \int_0^t \sum_{i\in A^n(s)}\omega^{n,\pm}_i\int_D F\pa{u^n_s(z)} \nabla P_{\epsilon}\phi^{\pm}(z) p_{\epsilon}(z,x_i^{n,\pm}(s))dzds\bigg\rvert\\ 
    & \lesssim \sup_{\substack{t\in [0,1],\\ y\in \bar{D}}}
    \int_D \left\lvert F\pa{u^n_t(y)} \nabla P_{\epsilon}\phi^{\pm}(y)-F\pa{u^n_t(z)} \nabla P_{\epsilon}\phi^{\pm}(z)\right\rvert p_{\epsilon}(z,y) dz\\ 
    & \leq 
    \sup_{\substack{t\in [0,1],\\ y\in \bar{D}}} \int_D \left\lvert F\pa{u^n_t(y)} \nabla P_{\epsilon}\phi^{\pm}(y)-F\pa{u^n_t(z)} \nabla P_{\epsilon}\phi^{\pm}(y)\right\rvert p_{\epsilon}(z,y) dz\\ 
    & \qquad +\sup_{\substack{t\in [0,1],\\ y\in \bar{D}}} \int_D \left\lvert F\pa{u^n_t(z)} \nabla P_{\epsilon}\phi^{\pm}(y)-F\pa{u^n_t(z)} \nabla P_{\epsilon}\phi^{\pm}(z)\right\rvert p_{\epsilon}(z,y) dz\\ 
    & \lesssim  \sup_{\substack{t\in [0,1],\\ y\in \bar{D}}} \int_D \norm{\nabla P_{\epsilon}\phi^{\pm}}_{C(\bar{D})}\lvert y-z\rvert \norm{u^n_t}_{C^{1}(\bar{D})}\frac{1}{\epsilon}e^{\lvert z-y\rvert^2/(c\epsilon)} dz\\ 
    & \qquad +\sup_{\substack{t\in [0,1],\\ y\in \bar{D}}} \int_D \norm{\nabla P_{\epsilon}\phi^{\pm}}_{C^1(\bar{D})}\lvert y-z\rvert \norm{u^n_t}_{C(\bar{D})}\frac{1}{\epsilon}e^{\lvert z-y\rvert^2/(c\epsilon)} dz\\ 
    & \lesssim \sup_{\substack{t\in [0,1],\\ y\in \bar{D}}} \int_D \norm{\phi^{\pm}}_{\HH^{s+2,p'}}\lvert y-z\rvert \norm{u^n_t}_{C^{1}(\bar{D})}\frac{1}{\epsilon}e^{\lvert z-y\rvert^2/(c\epsilon)} dz\\ 
    & \lesssim \norm{\phi^{\pm}}_{\HH^{s+2,p'}}\sup_{y\in \bar{D},n\in\N}\norm{\omega^{n,+}-\omega^{n,-}}_{\DD([0,1];\HH^{\eta,p})}
    \xrightarrow{\PP-a.s.}0,\end{align*}\begin{align*}
    \lvert T_3^{\pm}-T_4^{\pm}\rvert 
    &\lesssim \int_0^t \int_D \norm{\nabla P_{\epsilon}\phi^{\pm}}_{C(\bar{D})}\norm{\omega^{\pm}_s}_{C(\bar{D})}\lvert u^n_s(y)-u_s(y)\rvert dyds\\ 
    & \lesssim
     \norm{\nabla P_{\epsilon}\phi^{\pm}}_{C(\bar{D})}\norm{\omega^{\pm}}_{\DD([0,1];\HH^{\eta,p})}\lVert K\rVert_{H^{\eta,p}\rightarrow H^{\eta+1,p}}\\ 
     &\quad\left(\norm{\omega^{n,+}-\omega^{+}}_{\DD([0,1];\HH^{\eta,p})}+\norm{\omega^{n,-}-\omega^{-}}_{\DD([0,1];\HH^{\eta,p})}\right)\xrightarrow{\PP-a.s.}0.
\end{align*}
The remaining passages then coincide with the non-negative case discussed above. 

\begin{acknowledgements}
    We thank Margherita Zanella for useful discussions. F.G. was supported by the project \emph{Mathematical methods for climate science} funded by PON R\&I 2014-2020 (FSE REACT-EU).
\end{acknowledgements}

\bibliographystyle{plain}

\end{document}